\documentclass[11pt,a4paper, envcountsame]{amsart}
\usepackage[usenames,dvipsnames]{color}

 \usepackage[colorlinks,citecolor=blue,urlcolor=blue, linkcolor=blue, backref]{hyperref}
 
 \usepackage{amsmath, amssymb, xspace}
 \usepackage{graphicx}

% \usepackage[all]{xy}

%% Use TikZ for diagram
%\usepackage{tikz}
%\usetikzlibrary{arrows,snakes,positioning,backgrounds,shadows}
 
\newtheorem{theorem}{Theorem}[section]

\newtheorem{proposition}[theorem]{Proposition}

\newtheorem{prop}[theorem]{Proposition}

\newtheorem{lemma}[theorem]{Lemma}

\newtheorem{corollary}[theorem]{Corollary}

\newtheorem{cor}[theorem]{Corollary}

\theoremstyle{definition}
\newtheorem{definition}[theorem]{Definition}

\newtheorem{remark}[theorem]{Remark}

\newcommand{\NN}{{\mathbb{N}}}

\newcommand{\ZZ}{{\mathbb{Z}}}
\newcommand{\LL}{{\mathbb{L}}}
\newcommand{\FF}{{\mathbb{F}}}
\newcommand{\sub}{\subseteq}
\newcommand{\Fr}{{\rm Frob}}

\renewcommand{\phi}{\varphi}

\newcommand{\bi}{\begin{itemize}}
\newcommand{\ei}{\end{itemize}}
\newcommand{\bc}{\begin{center}}
\newcommand{\ec}{\end{center}}

\newcommand{\ES}{\emptyset}

\newcommand{\ex}{\exists}
\newcommand{\fa}{\forall}

\newcommand{\la}{\langle}
\newcommand{\ra}{\rangle}

\newcommand{\n}{\noindent}

\newcommand{\vsps}{\vspace{3pt}}

\newcommand{\lland}{\, \land \, }

\newcommand \seq[1]{{\left\langle{#1}\right\rangle}}

\newcommand\+[1]{\mathcal{#1}}

\DeclareMathOperator \cost{cost}
\DeclareMathOperator \rank{rk}
\DeclareMathOperator \Hom{Hom}
\newcommand{\wt}{\widetilde}
\newcommand{\ol}{\overline}

\newcommand{\LR}{\Leftrightarrow}

% \DeclareMathOperator \rank{\it rank}

%  \renewcommand \log  {\mathtt{lg \ }}
%\newcommand{\vectornorm}[1]{\left|\left|#1\right|\right|}

%Towsner

%\def\dotminussym#1#2{% \setbox0=\hbox{$\m@th#1-$}% \kern.5\wd0% \hbox to 0pt{\hss\hbox{$\m@th#1-$}\hss}% \raise.6\ht0\hbox to 0pt{\hss$\m@th#1.$\hss}% \kern.5\wd0}
%\newcommand{\dotminus}{\mathbin{\mathpalette\dotminussym{}}} \mathchardef\mhyphen="2D % display breaks \allowdisplaybreaks[2]

\begin{document}

\title{Describing finite groups by \\ short first-order sentences}

 \author{Andr\'e Nies and Katrin Tent}

\begin{abstract} 
We say that a class of finite structures for a finite first-order signature is $r$-compressible for an unbounded function $r \colon \NN \to \NN^+$ if each structure $G$ in the class has a first-order description of size at most $O(r(|G|))$. We show that the class of finite simple groups is $\log$-compressible, and the class of all finite groups  is $\log^3$-compressible. As a corollary we obtain that the class of all finite transitive permutation groups is  $\log^3$-compressible.
 The   results rely on the classification of finite simple groups, the bi-interpretability of the twisted Ree groups with finite difference fields,   the existence of   profinite presentations with few relators for finite groups, and group cohomology.
 We also indicate why the results are  close to optimal. 
\end{abstract}

\maketitle

%Kat: check Lemma  3.4. Do you mean one single formula or one for each pair A_i,T_i? If only one, not clear what the parameters are exactly

\section{Introduction} 
 Let $L$ be first-order logic in a signature  consisting of finitely many relation symbols,  function symbols, and constants.    We say that a sentence $\phi$ in $L$   \emph{describes}    $G$   if $G$ is the unique model of $\phi$ up to isomorphism. 
 We study the compressibility of finite $L$-structures $G$   up to isomorphism via such   descriptions.   Our main results are about compressibility of  finite groups.
 
      Note that  every finite  $L$-structure $G$   can be described by some   sentence~$\phi$:    for each element of $G$ we introduce an existentially quantified variable; we say that these are all the  elements of $G$, and that they satisfy the   atomic formulas valid for the corresponding elements of $G$.   
However,  this  sentence  is at least as long as the  size of the domain of $G$. 
We may think of a  description of $G$ which is much shorter than $|G|$ 
as a \emph{compression}  of $G$ up to isomorphism.   

Unless  stated  otherwise, descriptions of structures  will be in first-order logic.  For an infinite class of $L$-structures, we are interested in giving descriptions that are asymptotically  short relative to the size of the described structure.   This is embodied in  the following   definition. Usually  the function  $r$ grows slowly. %Unless otherwise stated, henceforth $L$ will be first-order logic.
  \begin{definition} \label{def:compress} Let  $r \colon \NN \to \NN^+$ be  an unbounded      function.  We say  that an infinite  class $\+ C$ of finite $L$-structures   is \emph{$r$-compressible} if for each      structure $G$  in $\+ C$,  there is   a  sentence $\phi$  in $L$  such that  $|\phi| = O(r (|G|))$ and $\phi$ describes~$G$.   \end{definition}
Sometimes  we also want to give a short description of a structure in $\+ C$,  together with a  tuple of elements. We say that    the class $\+ C$   is \emph{strongly  $r$-compressible} if for each      structure $G$  in $\+ C$, each $k$ and each  $\ol g \in G^k$,  there is   a  formula  $\phi(y_1,\ldots, y_k)$  in $L$  such that  $|\phi| = O(r (|G|))$ and $\phi$ describes~$(G, \ol g )$ (where the $O$ constant can  depend on $k$).    

In this paper, for notational convenience  we will use the definition \[\log m = \min \{r \colon \, 2^r \ge m\} .\]
The following is   our first  main result. 
%%%%
  \begin{theorem}\label{t:simple}  The class of finite simple groups is $\log$-compressible. \end{theorem}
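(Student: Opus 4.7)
The plan is to invoke the classification of finite simple groups (CFSG) to split $\+ C$ into a finite collection of families, describe each member $G$ by a sentence of size $O(\log|G|)$, and take the maximum of the resulting implicit constants. The $26$ sporadic groups each admit a fixed constant-size describing sentence. A cyclic group $\ZZ/p\ZZ$ of prime order $p$ is described by ``abelian group of order $p$'' with $p$ written in binary; using a term $p\cdot 1$ built by iterated doubling, this has size $O(\log p)=O(\log|G|)$.

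For the remaining infinite families the uniform strategy is bi-interpretability. If $G$ is bi-interpretable with an auxiliary structure $M$ via interpretations depending only on the family (not on $G$), then any sentence $\psi$ of size $s$ describing $M$ up to isomorphism yields, after composition with the fixed interpretation data, a sentence of size $s+O(1)$ describing $G$ up to isomorphism. The task therefore reduces to describing the small structure $M=M(G)$ succinctly.

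For the alternating groups $A_n$ ($n\ge 5$) I would use bi-interpretation with the linearly ordered set $([n],<)$, recovered from $A_n$ (up to a choice of parameters) via the natural permutation representation on the cosets of a point stabilizer $A_{n-1}$. The set $([n],<)$ is axiomatized in $O(\log n)$ symbols by asserting it is a finite linear order whose $(n-1)$-fold successor of the minimum element equals the maximum, with the iteration count $n-1$ encoded in binary via iterated composition of the successor function; this yields a describing sentence of size $O(\log n)$, well inside $O(\log|A_n|)=O(n\log n)$. For an untwisted Chevalley group of rank $r$ over $\FF_q$, bi-interpret with $(\FF_q,[r],<)$; the field $\FF_q$ with $q=p^k$ is axiomatized in $O(\log q)$ symbols by characteristic $p$ (with $p$ in binary) together with $\forall x\,(x^{p^k}=x)$, the term $x^{p^k}$ being formed by iterated squaring. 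Combined with the $O(\log r)$ description of $[r]$ this gives a sentence of size $O(\log q+\log r)=O(\log|G|)$.

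The main technical obstacle, flagged in the abstract, is the Suzuki groups ${}^2B_2(q)$ and the twisted Ree groups ${}^2G_2(q)$, ${}^2F_4(q)$, whose defining endomorphism $\sigma$ satisfies $\sigma^2=\Fr$ rather than being a power of $\Fr$; these are not interpretable in the pure finite field, and instead (as indicated in the abstract) are bi-interpretable with the finite \emph{difference field} $(\FF_q,\sigma)$. Axiomatizing this difference field adds only a single clause relating $\sigma^2$ to the appropriate Frobenius, so the description of $G$ remains of size $O(\log q)$. The delicate step is the group-theoretic recovery of $\sigma$ from the abstract group; the remaining twisted families ${}^2A_n$, ${}^2D_n$, ${}^3D_4$, ${}^2E_6$ are handled analogously by combining the difference-field axiomatization with the $O(\log r)$ rank data.
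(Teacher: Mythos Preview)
Your overall plan---split via CFSG and handle each family---matches the paper, but the bi-interpretability step as you state it cannot work for the families of unbounded Lie rank (including the alternating groups). A first-order interpretation $\Delta$ is based on a \emph{fixed} finite set of formulas, so in particular it has a fixed arity $k$: elements of $G=\Delta(M)$ are represented by $k$-tuples from $M$, whence $|G|\le |M|^k$. For $G=A_n$ and $M=([n],<)$ this would force $n!/2\le n^k$, which fails for large $n$; similarly, for a classical group of rank $r$ over $\FF_q$ interpreted in the two-sorted structure $(\FF_q,[r],<)$ one would need $q^{\Theta(r^2)}\le (q+r)^k$ for a fixed $k$, again impossible. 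So no \emph{uniform} interpretation of $A_n$ in $([n],<)$, or of (say) $\mathrm{PSL}_{r+1}(q)$ in $(\FF_q,[r],<)$, can exist. Ryten's bi-interpretability results, which the paper does invoke, are stated for \emph{fixed} Lie type and \emph{fixed} rank; the paper explicitly remarks that this alone does not give a single $O$-constant valid across all ranks.

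The paper therefore takes a different route for the bulk of the classification. It converts the short group \emph{presentations} of Guralnick--Kantor--Kassabov--Lubotzky (at most $C_0$ generators and relations, total length $O(\log n+\log q)$ for rank $n$ over $\FF_q$, covering all nonabelian finite simple groups with the possible exception of ${}^2G_2$) into first-order descriptions: a presentation $\la x_1,\dots,x_k\mid r_1,\dots,r_m\ra$ of a simple group $G$ becomes the sentence $\ex \ol x\,[x_1\neq 1\ \land\ \bigwedge_i r_i=1\ \land\ \fa y\,\alpha_k(y;\ol x)]$, where $\alpha_k$ is an $O(k+\log|G|)$-length generation formula, and simplicity guarantees that the only nontrivial quotient is $G$ itself. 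Only the small Ree groups ${}^2G_2(3^{2k+1})$, for which no short presentation is known, are handled via bi-interpretability with the difference field $(\FF_{3^{2k+1}},\Fr_3^{k+1})$---and there the Lie rank is $1$, so the arity obstruction never arises. Your treatment of the cyclic, sporadic, and small-Ree cases is essentially right; it is the attempt to push uniform bi-interpretability through the unbounded-rank families that breaks down.
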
  
   Finite  groups can be  described up to isomorphism  via presentations.  There is a large amount of  literature on finding very short presentations for ``most''  finite groups $G$;  see e.g. \cite{Babai:97,Guralnick:08, Bray.etal:11}. Using composition series, these presentations can be converted into first-order descriptions of $G$ that are    at most $O(\log^2 |G|)$ longer, as we will see in Proposition~\ref{p:short_presentation}.   %Not in () please
   
   The small Ree groups $^2G_2(q)$ are finite simple groups that arise as subgroups of the automorphism group $G_2(q)$ of the octonion algebra over the $q$-element  field $\mathbb F_q$, where $q$ has the form $3^{2k+1}$ \cite[Section 4.5]{Wilson:09}.  They form a  notorious case where short presentations are not known to exist. Nonetheless, we are able to find  short first-order descriptions by using the  bi-interpretability with the difference field $(\mathbb F_q, \sigma)$, where $\sigma$ is the $3^{k+1}$-th power of the Frobenius automorphism. This was  proved by 
Ryten~\cite[Prop.\ 5.4.6(iii)]{Ryten:07}.  It then suffices to give a short description of the difference field, which is not hard to obtain.

Let $\log^k$ denote the function $g(n) = (\log(n))^k$. Our second  main result  is the following:

\begin{theorem}\label{t:main} The class of finite groups is  strongly  $\log^3$--compressible.
\end{theorem}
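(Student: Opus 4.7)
The plan is to describe $G$ by recursion along a chief series
\[ 1 = N_0 \lhd N_1 \lhd \cdots \lhd N_n = G, \]
whose factors $K_i := N_i/N_{i-1}$ are minimal normal subgroups of $Q_i := G/N_{i-1}$ and hence characteristically simple, i.e.\ isomorphic to $S_i^{k_i}$ for some finite simple $S_i$. Since each factor has order at least $2$, the length $n$ is at most $\log|G|$. I inductively produce, for $i$ decreasing from $n$ to $0$, a strong description of $G/N_i$ together with a designated generating tuple that will serve as parameters when describing the next extension; the final stage ($i=0$) yields a strong description of $G$ itself.

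At the inductive step we are given a strong description of $Q_i$ and must describe the extension
\[ 1 \to K_i \to G/N_{i-1} \to Q_i \to 1. \]
Theorem~\ref{t:simple} supplies a description of $S_i$ of size $O(\log|G|)$, from which a description of $K_i = S_i^{k_i}$ follows with modest overhead (naming commuting generating tuples for each factor). Classical, possibly non-abelian, extension theory tells us that $G/N_{i-1}$ is then determined by the coupling homomorphism $Q_i \to \mathrm{Out}(K_i)$ together with a class in $H^2(Q_i, Z(K_i))$. When $S_i$ is non-abelian, $Z(K_i)=1$ and only the coupling is needed; we encode it by listing, for each generator of $Q_i$, its conjugation action on each chosen generator of $K_i$, each datum being of length $O(\log|G|)$. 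When $S_i$ is abelian, a cocycle must also be specified.

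The main obstacle is encoding the cocycle compactly. Here we invoke the existence of profinite presentations with few relators, as referenced in the abstract: a finite group $H$ admits a profinite presentation with $O(\log|H|)$ generators and few relators of polylogarithmic length. Applied to $Q_i$, this pins down the cohomology class by the values in $K_i$ of the relators, evaluated at fixed lifts in $G/N_{i-1}$ of the chosen generators of $Q_i$; these are a polylogarithmic number of elements of $K_i$, each specifiable in $O(\log|G|)$ symbols by reference to the inductive description of $K_i$. The extension data therefore occupies $O(\log^2|G|)$ symbols per stage, and summing over the $n \le \log|G|$ stages of the chief series yields the overall bound $O(\log^3|G|)$.

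Strong compressibility is preserved because every stage outputs a formula describing $G/N_i$ together with an explicit generating tuple, so at the end we have parameters for all of $G$ available. An additional tuple $\bar g \in G^k$ can then be incorporated by adjoining, for each entry, a formula asserting that it equals a specific word in these generators; using the logarithmic-size generating tuple furnished by the construction one arranges the word-lengths to be polylogarithmic in $|G|$, so that the total size remains $O(\log^3 |G|)$ with an $O$-constant depending on $k$.
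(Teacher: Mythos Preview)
Your overall architecture is close in spirit to the paper's, but there is a genuine gap in the cocycle step, and it stems from the direction of your induction.

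You descend a chief series and at each stage view $G/N_{i-1}$ as an extension of $K_i$ by $Q_i=G/N_i$; to pin down the cohomology class you invoke ``profinite presentations with few relators'' for $Q_i$. But the few-relators result you are relying on (the paper's Theorem~\ref{t:profinite}) is proved only for \emph{simple} groups. For a general finite group the minimum number of profinite relators is \emph{not} $O(\log|H|)$: already for $H=\mathbb{F}_p^{\,n}$ one needs $\binom{n+1}{2}=\Theta((\log|H|)^2)$ relators. Since the $Q_i$ appearing in your recursion are arbitrary quotients of $G$, this bound is the relevant one, and your per-stage estimate ``$O(\log^2|G|)$'' for the extension data becomes $O(\log^3|G|)$, yielding only $O(\log^4|G|)$ in total. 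A related slip is the phrase ``relators of polylogarithmic length'': elements of the profinite free group have no finite word length, so one cannot simply ``evaluate the relators at lifts'' inside a first-order formula. The paper does not do this; instead it uses the relator \emph{count} to bound the rank of $\Hom_F(R,Z)$ and then applies a linear-algebra lemma (Lemma~\ref{l:linalg}) to extract finitely many short test words.

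The paper avoids the whole difficulty by going \emph{bottom-up} along a composition series: at each step the normal subgroup is $G_{i-1}$ and the quotient $H_i=G_i/G_{i-1}$ is \emph{simple}, so Theorem~\ref{t:profinite} applies directly and gives $O(\log|H_i|)$ relators. This is precisely what makes the $\log^3$ bound go through. If you want to salvage your top-down chief-series approach, you would need either a bound of $O(\log|Q_i|)$ on the number of profinite relators of an arbitrary finite group $Q_i$ (which is false), or some alternative way to control $\mathrm{rank}\,H^2(Q_i,Z(K_i))$ uniformly by $O(\log|G|)$, which is not available in general.
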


 We describe a general finite group $G$ by choosing  a composition series $1=G_0\lhd G_1\lhd\ldots \lhd G_r=G$,  where $r \le \log |G|$. We use  Theorem~\ref{t:simple} to describe the factors $H_i= G_{i+1}/G_i$ of the series, which are simple by definition. We then use   the method of straight line programs due to \cite{Babai.Szemeredi:84}, and some group extension theory, to obtain short formulas   describing    $G_{i+1}$ for each $i< r$ as an extension of  $G_i$ by $H_i$.   
 
 The proof of    Theorem~\ref{t:main} assuming Thm.~\ref{t:simple} is analogous to the proof of similar results for presentations,   such as Babai et al.\ \cite[Section 8]{Babai:97} and  Mann \cite[Thm.\ 2]{Mann:98}.  However, in our case the deduction is different because we have to describe the extension of $G_i$ by $H_i$ in first-order logic rather than presentations. This is where we use
 the existence of profinite presentations with few relators for finite groups, and group cohomology (Section~\ref{s:ext}).

 Recall that a permutation group is a group $G$ together with an action of $G$ on a set $X$ given by
a homomorphism of $G$ into the symmetric group of~$X$. If 
the action is transitive, then it is equivalent to
the action of $G$ on $H\backslash G$ by right translation, where
$H$ is the stabilizer in $G$ of a point $x\in X$. Thus,
describing the action of $G$ on $X$ amounts to describing $G$ together with a distinguished subgroup $H$ of $G$. Since our methods yield short descriptions of this kind, we obtain:

\begin{corollary}\label{c:permutation} The class of finite transitive permutation groups is   $\log^3$--compressible.
\end{corollary}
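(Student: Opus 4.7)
The plan is to reduce describing a transitive permutation group to describing a pair $(G,H)$ consisting of a finite group with a distinguished subgroup, then invoke the strong $\log^3$-compressibility of Theorem~\ref{t:main}. Indeed, by the orbit--stabilizer correspondence, any transitive action of a finite group $G$ on a set $X$ is equivariantly isomorphic to the right-translation action of $G$ on the coset space $H\backslash G$, where $H=\mathrm{Stab}_G(x_0)$ for any chosen base point $x_0\in X$. Thus the permutation group is determined up to isomorphism by the pair $(G,H)$, and it suffices to describe such a pair, whose underlying size is $|G|$, by a sentence of size $O(\log^3|G|)$ in the signature obtained by augmenting the group signature with a unary predicate $P$ for $H$.

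Since every finite group admits a generating set of size at most $\log_2$ of its order, pick generators $h_1,\ldots,h_d$ of $H$ with $d\le\log_2|H|\le\log_2|G|$. By the strong $\log^3$-compressibility established in Theorem~\ref{t:main}, there is a formula $\psi(y_1,\ldots,y_d)$ in the group language, of size $O(\log^3|G|)$, that describes the pointed group $(G,\ol h)$ up to isomorphism. One then assembles a sentence of the form
$$\exists y_1\ldots\exists y_d\,\Big[\,\psi(\ol y)\,\wedge\,\bigwedge_{i=1}^{d}P(y_i)\,\wedge\,\Phi(\ol y,P)\,\Big],$$
where $\Phi$ axiomatises that $P$ is precisely the subgroup generated by $\ol y$.

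The main obstacle is writing $\Phi$ as a first-order formula of polylogarithmic size. The inclusion $\langle\ol y\rangle\subseteq P$ comes for free once $\Phi$ asserts closure of $P$ under the group operations together with $P(y_i)$ for each $i$. The reverse inclusion is subtle, because ``being in the generated subgroup'' is not directly first-order. I would handle it via Babai--Szemer\'edi straight-line programs, the same machinery that already enters the proof of Theorem~\ref{t:main}: every element of $\langle\ol y\rangle$ admits a straight-line program of length $O(\log^2|G|)$ in $\ol y$, so $\Phi$ can express that each element of $P$ arises as the output of such an SLP of bounded polylogarithmic length in $\ol y$, which is first-order by existentially quantifying over the SLP's intermediate variables. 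The principal technical burden is to verify that, after assembling $\psi$ and $\Phi$, the total formula size remains within $O(\log^3|G|)$, using shared structure between the two parts rather than the naive SLP encoding (which would blow up to a higher polylogarithmic exponent).
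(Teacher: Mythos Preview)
Your approach is correct and is essentially the paper's: reduce to describing $(G,H)$ in the language of groups with a unary predicate, invoke the strong $\log^3$-compressibility of Theorem~\ref{t:main} for $(G,\ol h)$ with $\ol h$ a short generating tuple for $H$, and then pin down $P=\langle\ol h\rangle$. Your worry about the size of $\Phi$ is unfounded, however: Lemma~\ref{generation} already supplies a formula $\alpha_k(g;\ol y)$ of length $O(k+\log|G|)=O(\log|G|)$ expressing $g\in\langle\ol y\rangle$, so one may simply take $\Phi\equiv\forall z\,(P(z)\leftrightarrow\alpha_k(z;\ol y))$, which is well within the $O(\log^3|G|)$ budget with no need for SLP encodings or shared structure.
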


 By counting the number of non-isomorphic groups of a certain size, in Remark~\ref{rmk:Ottimal} we  will  also provide  lower bounds on the length of a description, which   show the near-optimality of the two main results. 
 In particular,  from the  point  of view of the length of first-order descriptions,  simple groups are indeed simpler than general finite groups. 
 The lower bounds apply to descriptions in any formal language, such as second-order logic. Thus, for describing finite groups, first-order logic is already optimal.  
 
 As usual in the theory of Kolmogorov complexity, we gauge how short  a description of an object is by comparing it to size of the object itself, considered as its own  trivial description. Given  a   group of size $m$, each element can be encoded by $\log m$ bits. The size of the   table for the operation $(a,b) \mapsto ab^{-1}$ is therefore $m^2 \log m$. This table can be seen as a trivial description. Since our bounds on the lengths of short descriptions are powers of $\log m$, up to a linear constant it does not  matter whether we take $m$ or $m^2 \log m$ as the length of the trivial description.

A \emph{$\Sigma_r$-sentence} of $L$ is a sentence that is  in prenex normal form, starts with an existential  quantifier, and  has  $r-1$ quantifier alternations. We say that $\+C$ is \emph{$g$-compressible using $\Sigma_r$-sentences} if $\phi$ in Definition~\ref{def:compress} can be chosen in $\Sigma_r$ form.  We will   provide variants of the results above where the sentences are  $\Sigma_r$    for    a certain  $r$.  The describing sentences will  be  of length  $O(\log^4|G|)$.

  Usually we view a formula $\phi$ of $L$ as   a string   over the  infinite alphabet consisting of: a finite list of logical symbols, an infinite list of  variables, and the  finitely many symbols of~$L$.  Sometimes we want the      alphabet to be finite, which we can achieve  by  indexing  the variables with numbers written in decimal (such as  $x_{901}$).   This  increases the length of a formula by a logarithmic factor (assuming that $\phi$ always introduces new  variables with the least index that is available, so that $x_i$ occurs in $\phi$ only when $i < |\phi|$).    
 We then encode the resulting string by a binary string, which we call the {\em binary code} for $\phi$.  Its length is called  the {\em binary length} of $\phi$, which is   $O( |\phi| \log |\phi|)$.

Our results are particular to the case of groups. For instance, in the case of all undirected graphs, not much compression is possible using any formal language: the  length of the ``brute force'' descriptions given above, involving the open diagram,   is    close to optimal.  To see this, 
note that there are $2^{\binom n 2}$ undirected graphs on $n$ vertices. The isomorphism class of each such graph has at most $n!$ elements. Hence  the number of non-isomorphic undirected graphs with $n$ vertices is   at least  $2^{\binom n 2}/n! = \frac 1 n \Pi_{i=1}^{n-1} 2^i/i$, which for large $n$ exceeds $\frac 1 n 2^{{n^2}/6}$.    
For each $k$ there are at most $2^k$ sentences $\phi$ with a  binary code  of length less than~$k$.
 So    for  each large enough  $n$ there is an undirected  graph $G$ with $n$ vertices such that $n^2 - 6 \log n = O (|\phi| \log |\phi|)$ for any description $\phi$ of $G$.    (See \cite[Cor. 2.12]{Buhrman.Li.etal:99} for a recent proof that the lower bound $2^{\binom n 2}/n! $ is asymptotically equal to the number of nonisomorphic graphs on $n$ vertices.)

%
%Note however that the O constant  will change
%in the process of bi-interpretation.  
%
% {\small K. ch glaube sicher, dass die O-Konstanten nur langsam wachsen. Aber
%trotzdem bedeutet das, dass wir unser schoenes Bi-Interpretierbar-
%keitsargument nicht mehr benutzen koennen, aus dem unsere Arbeit
%ja eigentlich bestand. Und eigentlich wollte ich ja schon aus
%Faulheit die Saetze fuer die verschiedenen Gruppentypen nicht wirklich
%explzit hinschreiben. Gehen wuerde es, aber das wuerde keinen
%Spass machen und sehr viel Gruppentheorie benutzen.
%
%A: 
%Das sind nur ADDITIVE Konstanten.  Die Hauptformula beschreibt den K�rper $\Gamma(G)$, ich glaube f�r ein festes Gamma das nur vom Typ abh�ngt,. Der Rest sagt das $\Delta (\Gamma(G))$  definierbar isomorph zu G ist. Das Delta h�ngt vom Rang ab, und dieser Rest gibt die additive Konstante.}
%
%{\tiny K: Bei Ryten haengt die Formel fuer den Koerper auch vom Rang
%ab (denn er definiert den Torus als Schnitt der Zentralisatoren von Rang(G) vielen
%Elementen. Den Koerper kann man auch anders sehen, zB indem man nur ein Stueck
%vom Torus definiert, als Zentrum des Zentralisators eines Elementes des Torus. Das wuerde aber Arbeit machen, weil es fuer die getwisteten Gruppen noch keine fertige Referenz gibt. Die Bi-Interpretierbarkeit braeuchten wir dann eigentlich gar nicht mehr.}

\section{Short first-order formulas related to generation}
This section provides short formulas related to generation in monoids and groups. They will be used later on to obtain descriptions of finite groups. 
Some of the results are  joint work with Yuki Maehara, a former project student of Nies.

%:
Firstly, we consider exponentiation in monoids. 
\begin{lemma} \label{repeated squaring} 
For each positive integer $n$, there is an     existential  formula $\theta_n(g,x)$   in the first-order  language of monoids  $L(e, \circ)$, of length $O(\log  n)$, such that for  each   monoid~$M$, 
$M \models \theta_n(g,x)$ if and only if $x^n=g$.  

%\n {\rm (ii)}  $M \models \rho_n(g, x)$ if and only if $x^r=g$ for some $r$ with $0< r \le n$. 
\end{lemma}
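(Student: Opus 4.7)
The plan is to use the standard repeated-squaring trick, encoded as an existential first-order formula.

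Write $n$ in binary as $n = \sum_{i=0}^k b_i 2^i$ with $b_k = 1$, so $k = \lfloor \log n \rfloor$. The key observation is that $x^n$ can be computed from $x$ using $O(\log n)$ monoid operations: first compute the iterated squares $x^{2^i}$ for $i \le k$ by successive squaring, then multiply together those $x^{2^i}$ for which $b_i = 1$.

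To turn this into a short existential formula, I would introduce existentially quantified variables $y_0, y_1, \ldots, y_k$ intended to denote $x^{2^i}$, and variables $z_0, z_1, \ldots, z_m$ (where $m+1$ is the number of nonzero bits of $n$) intended to denote the partial products accumulated from the $y_i$ with $b_i = 1$. The matrix of the formula is then the conjunction of:
$y_0 = x$; the equations $y_{i+1} = y_i \circ y_i$ for $0 \le i < k$; an initialization $z_0 = y_{i_0}$ where $i_0 < i_1 < \cdots < i_m$ enumerate the positions of the nonzero bits; the updates $z_{j+1} = z_j \circ y_{i_{j+1}}$ for $0 \le j < m$; and finally $z_m = g$. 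Each conjunct has constant size in our alphabet convention, and there are $O(\log n)$ conjuncts and $O(\log n)$ quantifiers, giving total length $O(\log n)$.

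Correctness is immediate by induction: in any monoid $M$, an assignment satisfying the matrix must have $y_i = x^{2^i}$ for all $i \le k$ and $z_j = x^{2^{i_0} + \cdots + 2^{i_j}}$ for all $j \le m$, so $z_m = x^n$, which is constrained to equal $g$; conversely, if $x^n = g$ in $M$, taking these values for the $y_i$ and $z_j$ witnesses $\theta_n(g,x)$.

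There is no real obstacle here; the only thing to watch is the length bookkeeping. Under the infinite-alphabet convention used in the paper, each new variable counts as one symbol, so the total length is genuinely $O(\log n)$. (If one later switches to decimally indexed variables, an extra $\log \log n$ factor appears, but this is absorbed by the binary-length convention discussed in the introduction.) The formula is in prenex existential form, as required for later use.
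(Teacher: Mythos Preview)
Your proof is correct and uses essentially the same repeated-squaring idea as the paper. The only cosmetic difference is that the paper processes the binary digits from the most significant bit via a Horner-style recursion $y_{i+1}=y_i\circ y_i\circ x^{\alpha_{i+1}}$ with a single chain of auxiliary variables, whereas you first compute all the iterated squares and then accumulate the relevant ones; both yield an existential formula of length $O(\log n)$.
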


\begin{proof} We use a standard method from the theory of algorithms  known as   {exponentiation via repeated squaring}. 
Let $k=  \log  n$.
 Let $\alpha_1\ldots\alpha_k$   be the  binary expansion of $n$. Let  $\theta_n(g,x)$ be the formula
\[
  \exists y_1 \ldots \exists y_k  [y_1=x \ \wedge \ y_k=g \ \wedge \ \bigwedge_{1 \le i < k} y_{i+1}=y_i \circ y_i \circ x^{\alpha_{i+1}} ] \tag{*}
\]
where $x^{\alpha_i}$ is $x$ if ${\alpha_i}=1$,  and $x^{\alpha_i}$ is $e$ if ${\alpha_i}=0$. Clearly $\theta_n$ has length $O(\log\ n)$.
One  verifies  by induction on $k$  that the formulas  are correct.
%
%\n {\it (ii).} Let   $\rho_n(g,x) $ be  the formula 
%$$  \exists y_1 \ldots \exists y_k  [y_1=x \ \wedge \ y_k=g \ \wedge \ \bigwedge_{1 \le i < k} [y_{i+1}=y_i   \vee y_{i+1}=y_i \circ y_i  \vee y_{i+1}=y_i \circ y_i \circ x  ]   ].$$
\end{proof}

We give a sample application of Lemma~\ref{repeated squaring}  which will also be useful below.    By  the remark after Prop.\ \ref{rem:fields optimal} below,  the upper bound on the length of the descriptions   is close to optimal.
\begin{proposition}\label{c:cyclic}
The class of    cyclic groups $G$    of prime power order is   $\log$-compressible via sentences in the language of monoids that are in $\Sigma_3$ form. 
\end{proposition}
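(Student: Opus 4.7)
The plan is to exhibit, for each prime power $n = p^k$, a $\Sigma_3$ sentence $\phi_n$ in $L(e,\circ)$ of length $O(\log n)$ whose unique model up to isomorphism is $\ZZ/n\ZZ$. Set $m = \log n$ (using the paper's convention, so $2^m \ge n$) and aim for $\phi_n$ of prenex shape
\[
\exists g, y_1, \ldots, y_m \; \forall a, b, c, x \; \exists b', z_1, \ldots, z_m \; \psi,
\]
where $\psi$ is quantifier-free. The outer existentials are meant to witness a generator $g$ together with its dyadic powers $y_i = g^{2^{i-1}}$, pinned down in $\psi$ by the equations $y_1 = g$ and $y_{i+1} = y_i \circ y_i$ for $1 \le i < m$.

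The remainder of $\psi$ asserts three things. First, that $M$ is a group: associativity applied to $(a,b,c)$, the identity axioms on $a$, and the inverse law $a \circ b' = e \wedge b' \circ a = e$, all of constant length. Second, that $g$ has order exactly $p^k$: writing $p^k = \sum_{j \in S_0} 2^{j-1}$ and $p^{k-1} = \sum_{j \in S_1} 2^{j-1}$ in binary, this becomes the equation $\prod_{j \in S_0} y_j = e$ together with the inequation $\prod_{j \in S_1} y_j \ne e$, each of length $O(m)$; equivalently, one could invoke Lemma~\ref{repeated squaring} directly and absorb its existentials into the outer block. Third, that every element of $M$ lies in $\langle g \rangle$: each selector $z_i$ is required to equal $e$ or $y_i$ (a conjunction of $m$ two-term disjunctions), and $x = z_1 \circ z_2 \circ \cdots \circ z_m$.

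For correctness, if $M \models \phi_n$, then $M$ is a group and $g \in M$ has order dividing $p^k$ but not $p^{k-1}$; since $p$ is prime this order is exactly $p^k$. Moreover every $x \in M$ equals a product of a subset of $\{e, g, g^2, g^4, \ldots, g^{2^{m-1}}\}$, hence lies in $\langle g \rangle$, and so $M = \langle g \rangle \cong \ZZ/n\ZZ$. Conversely $\ZZ/n\ZZ$ with $g$ a generator satisfies $\phi_n$: given $x = g^i$ with $0 \le i < p^k \le 2^m$, set $z_j$ equal to $y_j$ or $e$ according to the $j$-th bit of $i$.

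Each component of $\psi$ contributes $O(m)$ symbols and each quantifier block contains $O(m)$ variables, giving $|\phi_n| = O(\log n)$; the prenex prefix is $\exists \forall \exists$, which is $\Sigma_3$. The main conceptual obstacle is expressing ``every element lies in $\langle g \rangle$'' in only $O(\log n)$ symbols, since first-order logic forbids numerical quantification over the exponent; the key trick is to precompute the dyadic-power basis $y_1, \ldots, y_m$ once, outside the universal quantifier over $x$, and then, inside that quantifier, existentially guess an $m$-bit selector $(z_i)_{i=1}^m$ encoding the binary expansion of the exponent of $x$ with respect to $g$.
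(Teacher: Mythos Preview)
Your proof is correct and follows essentially the same approach as the paper: both use repeated squaring to pin down the order of the generator and a binary-expansion trick to express in $\Pi_2$ form that every element is a power of it, assembling the pieces into a $\Sigma_3$ sentence of length $O(\log n)$. The only cosmetic differences are that you precompute the dyadic powers $y_i=g^{2^{i-1}}$ once in the outer existential block and select bits via the $z_i$ inside (whereas the paper's auxiliary formula $\chi_n$ rebuilds the power for each universally quantified element), and you explicitly include the group axioms, which the paper leaves implicit since they contribute only $O(1)$ to the length.
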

\begin{proof} Suppose that  $n= |G|= p^k$ where $p$ is prime. A group $H$ is isomorphic to $G$ if and only if there is an element $h$  such that $h^{p^k} = 1$, $h^{p^{k-1}}\neq 1$, and~$h$ generates $H$. By Lemma  \ref{repeated squaring}, the first two conditions  can be expressed by formulas of length $O(\log n)$ with $h$ as a free variable, the first  existential, the second universal. For the third condition, we need a   modification of  Lemma  \ref{repeated squaring}, namely a formula $\chi_n$ such that  for  each   monoid~$M$, 
$M \models \chi_n(g,x)$ if and only if $x^r=g$ for some $r $ such that $0 \le r < 2^{k}$, where $k = \log n$ (recall that by our definition of $\log$, the number $2^k$ is the least power of two which is not less than $n$). We  define $\chi_n(g,x)$   by 
\begin{equation*} 
  \exists y_0 \ldots \exists y_k  [y_0=1   \ \wedge \ y_k=g \ \wedge \bigwedge_{0 \le i < k} (y_{i+1}=y_i \circ y_i \circ x  \vee y_{i+1}=y_i \circ y_i) ] \
\end{equation*}
 It is now clear that the condition that $h$ generates the group  can be expressed by a formula of length $O(\log n)$ which is in $\Pi_2$ form.%Needed
\end{proof}

For elements $x_1,\ldots x_n$ in a group $G$ we let $\seq { x_1,\ldots,x_n }$ denote the subgroup of $G$ generated by these elements.
The pigeon hole principle easily implies the following:
\begin{lemma} \label{finite product}
Given a generating set $S$ of a finite group $G$, every element of $G$ can be written as a product   of elements of $S$ of length at most $|G|$. % no inverses needed
\end{lemma}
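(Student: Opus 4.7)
The plan is a straightforward pigeonhole argument on the increasing chain of ``reach sets'' from the identity. For each $k\ge 0$, let
\[ T_k = \{ s_{i_1} s_{i_2}\cdots s_{i_j} : 0\le j \le k,\ s_{i_\ell}\in S \}, \]
the set of elements of $G$ expressible as a product of at most $k$ elements from $S$ (with $T_0=\{e\}$ coming from the empty product). These sets form an increasing chain $T_0 \sub T_1 \sub T_2 \sub \ldots \sub G$.

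First I would show that if $T_k = T_{k+1}$ for some $k$, then $T_k$ is already all of $G$. Indeed, $T_k = T_{k+1}$ means $T_k\cdot S \sub T_k$, and a quick induction on $m$ then gives $T_k\cdot T_m \sub T_k$ for every $m$. Since $G$ is finite, every element $s\in S$ has finite order and so $s^{-1}$ is a positive power of $s$; hence $\seq S = G$ implies that every element of $G$ lies in $T_m$ for some $m$. Combined with $T_k\cdot T_m \sub T_k$, this gives $T_k = G$.

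Now if the chain ever fails to stabilize, each strict inclusion $T_k \subsetneq T_{k+1}$ adds at least one new element, so $|T_k| \ge k+1$ as long as no stabilization has occurred. Since $|T_k|\le |G|$, stabilization must take place by the time $k = |G|-1$, and by the previous paragraph this stabilized set equals $G$. Hence every element of $G$ is a product of at most $|G|-1 \le |G|$ elements of $S$, as required. There is no real obstacle here; the only subtlety is the observation that in a finite group a generating set generates already by positive-length words, which is what lets the pigeonhole step conclude $T_k = G$ rather than merely $T_k = \seq S$ in some weaker sense.
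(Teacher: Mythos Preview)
Your argument is correct and is precisely the pigeonhole argument the paper has in mind; the paper does not actually spell out a proof, merely stating that ``the pigeon hole principle easily implies'' the lemma, and your chain $T_0\subsetneq T_1\subsetneq\cdots$ with stabilization forced by $|G|$ is the intended content of that remark.
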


We     define a crucial collection of   formulas  $\alpha_k(g;  x_1,\ldots,x_k)$  in the first-order    language of monoids    so that $\alpha_k(g;  h_1,\ldots,h_k)$ expresses  that
$g$ is in    ${\langle h_1,\ldots,h_k \rangle}$.    These formulas depend only on $k$ and the size of the group~$G$.
\begin{lemma} \label{generation}
For each positive integers $k,v$, there exists a first-order formula $\alpha_k(g;x_1,\ldots,x_k)$   in the language of monoids of length $O(k+\log v)$ such that   for each group $G$ of size at most $v$,  ${G \models \alpha_k(g;x_1,\ldots,x_k)}$ if and only if ${g \in \langle x_1,\ldots,x_k \rangle}$. 
\end{lemma}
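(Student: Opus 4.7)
The plan is to express ``$g \in \langle x_1, \ldots, x_k \rangle$'' by saying that $g$ is a product of at most $v$ elements from $\{x_1, \ldots, x_k\}$. This is adequate even though we are working in the language of monoids: in a finite group, the submonoid generated by a set $S$ equals the subgroup generated by $S$, since for any $h$ in the submonoid $h^{-1} = h^{|h|-1}$ is also in it. By Lemma~\ref{finite product}, products of length at most $v$ already capture every element of $\langle x_1, \ldots, x_k \rangle$.

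The main obstacle is the length bound. A naive formula of the shape $\exists y_1 \cdots \exists y_v (g = y_1 \cdots y_v \land \bigwedge_{i \le v} \bigvee_{j \le k} y_i = x_j)$ has length $\Omega(v k)$, which is too large. The trick is a divide-and-conquer recursion combined with a sharing trick via a universal quantifier, in the spirit of the repeated-squaring formula $\theta_n$ of Lemma~\ref{repeated squaring}.

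Concretely, I would recursively define formulas $\beta_m(g; x_1, \ldots, x_k)$ expressing ``$g$ is a product of $m$ factors from $\{e, x_1, \ldots, x_k\}$'' as follows. For the base case, let $\beta_1(g; \ol x) := (g = e) \lor (g = x_1) \lor \cdots \lor (g = x_k)$, of length $O(k)$. For the recursive case, let
\[
\beta_{2m}(g; \ol x) := \exists u \exists w \bigl[\, g = u \circ w \; \land \; \forall t \bigl( (t = u \lor t = w) \to \beta_m(t; \ol x) \bigr) \bigr].
\]
The crucial point is that $\beta_{2m}$ contains only one occurrence of $\beta_m$, because the universal $t$ factors the two required instances through a single subformula. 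Hence $|\beta_{2m}| = |\beta_m| + O(1)$, and an easy induction gives $|\beta_{2^{\ell}}| = O(k + \ell)$.

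To finish, set $\ell = \lceil \log v \rceil$ and define $\alpha_k(g; \ol x) := \beta_{2^{\ell}}(g; \ol x)$, of length $O(k + \log v)$. Since the factor $e$ is permitted, $\beta_{2^\ell}(g; \ol x)$ holds in $G$ exactly when $g$ is a product of at most $2^\ell \ge v$ elements of $\{x_1, \ldots, x_k\}$, which by the preceding remarks is equivalent to $g \in \langle x_1, \ldots, x_k\rangle$ for any group $G$ of size at most $v$. The only delicate part was compressing the naive formula, which the universal-quantifier sharing resolves at the cost of introducing $\Pi_2$-style alternations at each recursion level.
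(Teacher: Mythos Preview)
Your proof is correct and is essentially identical to the paper's: the paper defines formulas $\delta_i$ by the very same $\exists u\exists v\,[g=uv \land \forall w\,((w=u\lor w=v)\to\delta_{i-1})]$ recursion (attributed to the PSPACE-completeness trick for QBF), with base case $\delta_0\equiv\bigvee_j(g=x_j\lor g=1)$, and takes $\alpha_k=\delta_{\log v}$. Your additional remark that the submonoid generated by $S$ in a finite group coincides with $\langle S\rangle$ makes explicit something the paper leaves implicit, and your reuse of the variable names $u,w,t$ across levels is precisely the optimization the paper notes in Remark~\ref{binary}.
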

\begin{proof}
We use a technique that originated in computational complexity to show that the set of true quantified boolean formulas is PSPACE-complete. % \cite[Thm 8.9]{Sipser:97}. 
For  $i \in \NN$ we inductively define   formulas $\delta_i(g;x_1,\ldots,x_k)$.
Let
\[\delta_0(g;x_1,\ldots,x_k) \equiv \bigvee_{1 \le j \le k}[g=x_j \  \vee \ g=1].\] 
For $i>0$ let
\[
\begin{split}
\delta_i(g;x_1,\ldots,x_k) \equiv \exists u_i \exists v_i [&g = u_i v_i \ \wedge\\
&\forall w_i [(w_i = u_i \vee w_i = v_i) \rightarrow \delta_{i-1}(w_i;x_1,\ldots,x_k)]].
\end{split}
\]
Note that $\delta_i$ has length $O(k+i)$, and $G \models \delta_i(g;x_1,\ldots,x_k)$ if and only if $g$ can be written as a product, of length at   most $2^i$,   of $x_r$'s.

Now let  $\alpha_k(g;x_1,\ldots,x_k) \equiv \delta_p(g;x_1,\ldots,x_k)$  where $p =  \log v $. Then $2^p \ge v $ by our  definition of $\log$, so $\alpha_k$ is a formula as   required by Lemma~\ref{finite product}.
\end{proof}

\begin{remark} \label{binary} We note that we can optimize the formulas     in Lemmas~\ref{repeated squaring} and Lemma~\ref{generation}  so that  the length bounds apply to the binary length.  For instance, in Lemma~\ref{generation}   we can ``reuse''  the quantified variables $u,v,w$ at each level $i$, so that $\alpha_k$ becomes a formula  over an alphabet of size $k + O(1)$. \end{remark}

%We give a sample application of Lemmas~\ref{repeated squaring} and~\ref{generation} which will also be useful below.    By  the remark after Prop.\ \ref{rem:fields optimal} below,  the upper bound on the length of the descriptions   is close to optimal.
%\begin{proposition}\label{c:cyclic}
%The class of    cyclic groups $G$    of prime power order is   $\log$-compressible  in the language of monoids. 
%\end{proposition}
%\begin{proof} Suppose that  $n= |G|= p^k$ where $p$ is prime. A group $H$ is isomorphic to $G$ if and only if there is an element $h$  such that $h^{p^k} = 1$, $h^{p^{k-1}}\neq 1$, and~$h$ generates $H$. By Lemma  \ref{repeated squaring}, the first two conditions  on $h$ can be expressed by  formulas  of length $O(\log n)$ (the first formula is existential and the second universal).  By Lemma  \ref{generation} the third condition can also be described by a  formula  of length $O(\log n)$.
%\end{proof}
%
%For the third condition, we need a slight modification of  Lemma  \ref{repeated squaring}, namely a formula $\chi_n$ such that  for  each   monoid~$M$, 
%$M \models \chi_n(g,x)$ if and only if $x^k=g$ for some $k \le n$. For this we replace the rightmost expressions in ($*$) by $y_{i+1}=y_i \circ y_i \circ x^{\alpha_{i+1}} \vee y_{i+1}=y_i \circ y_i $. %Needed

%%%%%%%%%%%%%%%%%%%%%%%
\section{Straight line programs and  generation} \label{s:SLP}
In this section we recall the Reachability Lemma from  Babai and Szemer\'edi~\cite[Thm.\ 3.1]{Babai.Szemeredi:84}, and the notion  of a pre-processing set introduced in  Babai et al.~\cite[Lemma 8.2]{Babai:97} following their proof sketch. Let $G$ be a finite group, $S\sub G$
and $g\in G$.
 A~\emph{straight line program (SLP) $\+ L$  over $S$} is a sequence of group elements such that each element of $\+ L$  is either in $S$, an inverse of an earlier element or a product of two earlier elements. We say that an SLP $\+L$
\emph{computes $g$ from $S$} if $\+L$ is an SLP over $S$ containing $g$.

 The \emph{reduced length} of $\+ L$ is the number of elements in $\+ L$  outside $S$.  For a set $A\sub G$ we say that a straight line program $\+ L$ over $S$ \emph{computes $A$} if  every element of $A$ occurs in $\+L$.
  Let $\cost(A\mid S)$ be the shortest reduced length of a straight line  program computing $A$ from $S$.
 
%{\it Katrin: wir brauchen $\cost$ eigentlich nicht}

For a  subset $S$ of a finite group $G$, Babai  and Szemer\'edi~\cite{Babai.Szemeredi:84} construct a set of generators $A$ for $\seq{S}$ with  $|A|\leq \log|\seq{S}|$ such that every element of $\seq{S}=\seq{A}$ has length at most $2\log|G|$ as a word over $A$ (cf. Lemma \ref{finite product}).
Such \emph{pre-processing sets} will reduce  the length of the formulas in Section~\ref{s: general}.  
We include the construction for convenience,   and in order to adjust it for future reference to    an increasing sequence of subsets of $G$.

 \begin{lemma}[\cite{Babai.Szemeredi:84,Babai:97}] \label{lem:Babai_super} Let $G$ be a finite group.   Suppose  $T_1 \subset\ldots\subset T_k\sub G$ is an ascending sequence of subsets
 and $G_i=\seq{T_i}, i=1,\ldots, k$. 
 
 There is an ascending sequence of pre-processing sets   $A_i$ for $G_i,i=1,\ldots k,$ with $|A_i|\leq \log|G_i|, \seq{A_i}=\seq{T_i}$, $\cost (A_i \mid T_i)< (\log|G_i|)^2$,    and $ \cost(g \mid A_i) < 2 \log |G_i|$ for every  $g \in G_i$.
  \end{lemma}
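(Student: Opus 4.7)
The plan is to apply the Babai--Szemer\'edi cube-doubling construction \cite[Thm.\ 3.1]{Babai.Szemeredi:84} in $k$ successive stages, extending the sequence of generators built in the previous stage rather than starting afresh. Recall the one-stage construction for a single generating set $S$: one greedily builds a sequence $A = (a_1, a_2, \ldots)$ together with its \emph{cube} $C_j = \{1, a_1\}\{1, a_2\}\cdots\{1, a_j\}$ so that $|C_{j+1}| \geq 2|C_j|$ at each step. The combinatorial core of \cite{Babai.Szemeredi:84} shows that whenever $C_j \ne \seq{S}$, a new element $a_{j+1}$ doubling the cube can be produced from $C_j$ and $S$ in $O(\log|\seq{S}|)$ SLP steps over $A \cup S$. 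After $m \leq \log|\seq{S}|$ additions the cube equals $\seq{S}$, so every $g \in \seq{S}$ has a cube expression of length $m$, which in turn gives $\cost(g \mid A) < 2\log|\seq{S}|$ (a cube word needs $m - 1$ multiplications plus at most $m$ inversion bookkeeping steps).

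For the ascending version I would induct on $i$. For $i = 1$ I apply the one-stage construction to $S := T_1$ to obtain $A_1$. For the inductive step, I resume the doubling construction from the current state, namely $A_i$ with cube equal to $G_i$; I continue adding new elements, chosen via the combinatorial lemma applied to products of the current cube with elements of $A_i \cup T_{i+1}$, until the cube fills $G_{i+1}$. Since each addition at least doubles the cube inside $G_{i+1}$, at most $\log|G_{i+1}| - \log|G_i|$ new elements suffice, whence $|A_{i+1}| \leq \log|G_{i+1}|$, and the inclusion $A_i \subseteq A_{i+1}$ is automatic. Each new $a_j$ is produced from $A_i \cup T_{i+1}$ in $O(\log|G_{i+1}|)$ SLP steps, so summing over all $|A_{i+1}| \leq \log|G_{i+1}|$ generators (and using $T_i \subseteq T_{i+1}$ to carry over the previous stage's cost) yields $\cost(A_{i+1} \mid T_{i+1}) < (\log|G_{i+1}|)^2$. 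The bound $\cost(g \mid A_i) < 2 \log |G_i|$ then comes directly from the cube expression of $g$ over $A_i$.

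The main potential obstacle is verifying that the doubling step of \cite{Babai.Szemeredi:84} can be resumed mid-construction when transitioning from stage $i$ to stage $i+1$. This turns out not to be a real issue because the doubling condition is purely local: it refers only to the current cube and the available generators. The elements $a_1, \ldots, a_{|A_i|}$ built at stage $i$ remain valid, and enlarging the generator pool from $A_i$ to $A_i \cup T_{i+1}$ only increases the set of admissible extensions. Thus the ascending extension reduces to straightforward bookkeeping on top of the single-stage Babai--Szemer\'edi construction, where the key point is that at no stage do we need to revisit or modify previously chosen $a_j$.
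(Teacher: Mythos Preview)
Your proposal is correct and follows essentially the same approach as the paper: build the cube $K(j)=\{1,a_1\}\cdots\{1,a_j\}$ greedily so that each new $a_{j+1}$ doubles $|K(j)|$ and is reached by a short SLP over the current generators, then simply continue the same process with the larger set $T_{i+1}$ once $G_i$ has been covered. The only cosmetic difference is that the paper terminates stage~$i$ when $K(s)^{-1}K(s)=G_i$ rather than when the cube itself equals $G_i$; this is what produces the factor~$2$ in the bound $\cost(g\mid A_i)<2\log|G_i|$, and it also explains why no ``inversion bookkeeping'' is needed inside the cube word itself.
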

  \begin{proof} 
We  first consider the case $k=1$, i.e., a single set $S=T_1\sub G$. Let $s$ be minimal with $2^s\geq |\seq{S}|$ so that $s = \log |\seq{S}|$ according to our definition of $\log$.  For $i \le s$, we inductively define an increasing sequence of  subsets $K(i)\sub \langle S\rangle$ of size $2^i$, elements $z_i\in K(i)$  and an increasing sequence of SLPs $\+ L_i$ computing  $z_1,\ldots, z_i$ from $S$.
  The set $\{z_1,\ldots, z_s\}$ will   serve  as our pre-processing set for $\seq{S}$.

To begin with, let  $K(0) = \{1\}, z_0=1, \+ L_{0}=\emptyset$. Suppose $K(i)$ and $\+ L_{i}$ have been defined with the required properties. If $  K(i)^{-1} K(i) \neq \seq S$,    there are $v \in K(i)^{-1} K(i)  $ and $x \in S$ such that $z_{i+1} := vx \not \in K(i)^{-1} K(i) $. 
Let $K(i+1) $ be the set of products $\Pi_{l \le i+1} z_l^{\alpha_l}$ where $\alpha_l \in \{ 0,1\}$.
By the choice of $z_{i+1}$ we have $|K(i+1)| = 2 |K(i)|=2^{i+1}$.

One   can write $z_{i+1} = v_0^{-1} v_1 x$ with $v_0,v_1 \in K(i)$, and $v_0 \neq v_1$,  so that we can also assume that not both $v_0$ and $v_1$ have length $i$.  
Since $\+L_i$ computes $z_1,\ldots z_i$, an SLP  computes  $v_0$ and $v_1$ which    extends $\+L_{i}$  by  at most $2i-3$   elements (corresponding to the initial segments of the $v_r$ of length $>1$). 
 To obtain $\+ L_{i+1}$,  we append  $v_0^{-1}$, $v_0^{-1}v_1$ and finally $z_{i+1}=v_0^{-1}v_1x$ to  $\+ L_{i}$. In total we have appended at most $2i$ elements to $\+ L_i$.
%(we thank Nies' summer student Gustavo da Paula for noting this latter point, which leads to slightly better bounds)
Clearly this process ends after $s$ steps, when $  K(s)^{-1} K(s) = \seq S$.
Then $A=\{z_1,\ldots z_s\}$ is a generating set for $\seq{S}$, and $\+L_s$
is an SLP computing $A$ from $S$ of reduced length at most $2 \sum_{i=1}^s  (i-1)   \leq s^2$. Since $  K(s)^{-1} K(s) = \seq S$, we see that any $g\in \seq {S}$ can be computed from $A=\{z_1,\ldots z_s\}$
by an SLP of reduced length at most $2s-1  <   2\log |G|$. Thus, $A$ is the required pre-processing set.

Now suppose we have $S= T_1\subset T_2$ and   $A = A_1$ as above  computed by 
$ \+ L_{s_1}=\+ L_s$ from $T_1$ so that $K(s_1)^{-1} K(s_1) = \seq {T_1}$. We continue the construction using elements   $x \in T_2$ for   $s_2 -s_1$ steps extending $A_1$ to a set $A_2$ and $ \+ L_{s_1}=\+ L_s$  to an SLP $\+ L_{s_2} $. Inductively we find the required $A_i, i\leq k$.
\end{proof}

\begin{corollary}\label{c:swift_generation}Any finite group $G$ has a  generating set $A$ of size at most
$\log|G|$ such that any element of $G$ has length at most $2|A|$ over $A$.
\end{corollary}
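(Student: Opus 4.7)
The plan is to apply Lemma~\ref{lem:Babai_super} in the single-step case $k=1$, taking $T_1 = G$ itself (so $\seq{T_1} = G$ trivially). This immediately yields a generating set $A$ of size $s := \log|G|$, but we need to extract the slightly sharper length bound $2|A|$ rather than the bound $2\log|G|$ displayed in the lemma statement; fortunately the construction gives this for free.

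Indeed, I would revisit the inductive construction of the sets $K(i) \subseteq \seq{S}$ in the proof of Lemma~\ref{lem:Babai_super}. By definition $K(s)$ consists of the products $\prod_{l \le s} z_l^{\alpha_l}$ with $\alpha_l \in \{0,1\}$, so every element of $K(s)$ is a word of length at most $s$ over $A = \{z_1,\ldots,z_s\}$. The construction terminates precisely when $K(s)^{-1} K(s) = \seq{S} = G$, which means every $g \in G$ can be written as $v_0^{-1} v_1$ with $v_0, v_1 \in K(s)$. Concatenating, $g$ has length at most $2s = 2|A|$ as a word over $A \cup A^{-1}$.

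The only delicate point is that the construction as stated produces $A$ of size exactly $s = \log|G|$, which gives $|A| \le \log|G|$ as required, and we should note that $s$ may in principle be smaller if the process terminates early (in which case we simply replace $s$ by the true termination index, and the same counting argument applies). No step here presents a real obstacle: the corollary is essentially a repackaging of the length bound implicit in the construction, the substantive work having already been done in Lemma~\ref{lem:Babai_super}.
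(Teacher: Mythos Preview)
Your proposal is correct and matches the paper's approach: the corollary is stated without proof in the paper, as an immediate consequence of the construction in Lemma~\ref{lem:Babai_super}, and you have correctly unpacked why the word-length bound is $2|A|$ (via $K(s)^{-1}K(s) = G$ with each element of $K(s)$ a product of at most $|A|$ generators) rather than merely the SLP-cost bound $2\log|G|$ displayed in the lemma statement. Your remark about possible early termination is also accurate and handles the only subtlety.
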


 We call   a generating set with the latter property  \emph{swift}. Swift generating sets  will be used below to give short descriptions for finite groups.

For reference we also note that the first part of the proof of Lemma \ref{lem:Babai_super} shows the following:

\begin{corollary}[Reachability Lemma \cite{Babai.Szemeredi:84}]\label{lem:SLP} Let $r = \log |G|$. For each   set $S \sub G$ and any $g\in\langle S\rangle$,   
there is a  straight line program   $\+ L$ of reduced length  at most $(r+1)^2$ that  computes $g$ from $S$. \end{corollary}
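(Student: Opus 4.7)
The plan is to extract the bound directly from the $k=1$ case already established in the proof of Lemma~\ref{lem:Babai_super}. I would not redo the inductive doubling construction; I would simply concatenate the two SLPs that the earlier argument produces.

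First, I would apply Lemma~\ref{lem:Babai_super} to the single set $T_1 = S$ (so $k=1$), obtaining a pre-processing set $A$ for $\langle S \rangle$ together with an SLP $\mathcal{L}_A$ computing $A$ from $S$. Writing $s = \log |\langle S \rangle|$, the proof of Lemma~\ref{lem:Babai_super} yields $|A| \le s$ and a reduced length bound $\cost(A \mid S) \le s^2$; in particular $s \le r$, so $\mathcal{L}_A$ has reduced length at most $r^2$.

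Next, since at the end of the doubling process $K(s)^{-1}K(s) = \langle S \rangle$, every $g \in \langle S \rangle$ can be written as $u^{-1}v$ with $u,v \in K(s)$, hence as a product of at most $2s$ elements of $A$. This gives an SLP $\mathcal{L}_g$ over $A$ that computes $g$ in reduced length at most $2s - 1 \le 2r - 1$. (This is precisely the estimate $\cost(g \mid A) < 2\log|G|$ already recorded in Lemma~\ref{lem:Babai_super}.)

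Finally, I would form the concatenation $\mathcal{L} = \mathcal{L}_A \,\frown\, \mathcal{L}_g$, which is an SLP over $S$ computing $g$. Its reduced length is at most $r^2 + (2r-1) + 1 \le (r+1)^2$, the extra $+1$ accounting for $g$ itself if it does not already appear in either subprogram. No genuine obstacle arises here: all the real work — the inductive construction of $K(i)$ and the cost bookkeeping — has already been carried out in Lemma~\ref{lem:Babai_super}; the corollary is just a matter of repackaging those estimates.
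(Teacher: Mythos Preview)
Your proposal is correct and follows essentially the same approach as the paper: both extract the bound from the $k=1$ case of Lemma~\ref{lem:Babai_super} by first running the doubling construction to reach $g\in K(s)^{-1}K(s)$ and then appending a short SLP for $g$ over the resulting $z_i$'s. The only cosmetic difference is that the paper halts the construction as soon as $g\in K(s)^{-1}K(s)$ rather than building the entire pre-processing set, but this does not change the bound; your extra ``$+1$'' is unnecessary (since $g$ already occurs in $\mathcal{L}_g$) but harmless.
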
 
\begin{proof}
We build the sequences $z_1,\ldots, z_i$, $K(0),\ldots, K(i)$ and $ \+L_0,\ldots, \+L_i$ as in the proof of Lemma \ref{lem:Babai_super}
until $g\in K(s)^{-1} K(s)$. This yields an SLP computing $g$ from $S$ of reduced length at most $(r+1)^2$.
\end{proof}

We say that an $L$-formula $\phi(\ol x)$ in variables $\ol x$, possibly with parameters in  a group $G$,   \emph{defines} an ordered tuple $A$ (of the same length as $\ol x$) in $G$ if $A$ is the unique
tuple in $G$ such that $\phi(A)$ holds in $G$ (here    the elements of $A$ are substituted for the variables of $\phi$).  We can define swift generating sets for a
normal series of a group $G$ by a formula in $O(\log^2|G|)$.

\begin{lemma}\label{l:preproc_formula}
Let $G$ be a finite group with a normal series
\[1\lhd G_0\lhd G_1\lhd\ldots \lhd G_r=G\]
and an ascending sequence of generating sets 
\[ T_0\subset T_1\subset\ldots\subset T_r=T\]
with $\seq{T_i}=G_i, |T_i|\leq\log|G_i|, 0 \le i\leq r$.

There is a formula $\psi$ using  parameters from the set $T$   
with $|\psi|= O(\log^2|G|)$
defining a  sequence $A_1 \sub  \ldots \sub A_r =A\sub G$  of pre-processing sets for $G_i$ over $T_i, i\leq r$.

%Furthermore,  the cosets ${zG_{i-1}}$  where $z \in A_i\setminus A_{i-1}$ form a swift generating set for $G_i/G_{i-1}$. \marginpar{Brauchen wir das letzte noch?}
\end{lemma}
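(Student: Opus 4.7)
My approach is to let Lemma \ref{lem:Babai_super} itself do most of the work: apply that lemma to the ascending sequence $T_1\subset\cdots\subset T_r$ to obtain pre-processing sets $A_1\subset\cdots\subset A_r=A$ together with an SLP $\+L$ over $T$ of reduced length at most $(\log|G|)^2$ that computes $A$. The crucial observation is that an SLP is nothing more than an ordered list of constant-size equations---each step reads ``$w_k$ is a fixed parameter from $T$'', ``$w_k = w_i w_j$'', or ``$w_k = w_i^{-1}$''---and these equations uniquely determine every element appearing in $\+L$ once the parameters from $T$ are fixed. We can therefore \emph{turn the SLP into a formula}: existentially quantify one variable for each step, assert each equation, and read off the $A_i$'s as an initial segment of the defined tuple.

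Concretely, enumerate $\+L$ as $w_1,\ldots,w_m$ with $m = O(\log^2|G|)$, and order the elements of $A$ as $z_1,\ldots,z_s$ (where $s = |A| \le \log|G|$) so that each $A_i = \{z_1,\ldots,z_{s_i}\}$ is an initial segment of this list, with $s_i = |A_i|$. For each $j \le s$ let $k_j$ be the position in $\+L$ at which $z_j$ is produced. Take $\psi(x_1,\ldots,x_s)$ to be
\[
\exists y_1\cdots\exists y_m\,\Bigl[\,\bigwedge_{k=1}^{m} E_k(y_1,\ldots,y_k) \;\wedge\; \bigwedge_{j=1}^{s} x_j = y_{k_j}\,\Bigr],
\]
where $E_k$ is the equation recording the $k$th step of $\+L$ (either $y_k = t$ for some $t \in T$ used as a parameter, or $y_k = y_i y_j$, or $y_k = y_i^{-1}$, with $i,j<k$). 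Each $E_k$ has constant size as a first-order formula, so $|\psi| = O(m) = O(\log^2|G|)$.

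For correctness, a straightforward induction on $k$ shows that once the parameters are interpreted as the fixed elements of $T$, the conjunction of the $E_k$'s admits exactly one tuple $y_1,\ldots,y_m$, namely the sequence actually computed by $\+L$. The second conjunct then forces $x_j = z_j$ for each $j$, so $\psi$ defines the tuple $(z_1,\ldots,z_s)$ uniquely; the subtuples $A_i = \{x_1,\ldots,x_{s_i}\}$ are ascending by construction and are pre-processing sets for the $G_i$ over the $T_i$ by the choice of $\+L$. I do not foresee any substantive obstacle here: the only care needed is the bookkeeping between the SLP and its syntactic encoding, and the observation that the $|T| \le \log |G|$ parameter-lookup steps contribute only a lower-order additive term to $|\psi|$.
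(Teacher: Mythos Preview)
Your proposal is correct and takes essentially the same approach as the paper: invoke Lemma~\ref{lem:Babai_super} to obtain the $A_i$ together with an SLP of reduced length $O(\log^2|G|)$ computing them from $T$, and then transcribe the SLP step by step into an existential formula whose unique solution in $G$ is the tuple~$A$. The one difference worth noting is that the paper's $\psi$ additionally conjoins, for each $i\le r$, the condition $\langle A_i\rangle=\langle T_i\rangle$ expressed via the generation formulas $\alpha_k$ of Lemma~\ref{generation}; this is redundant for defining $A$ in $G$ (your deterministic SLP already forces a unique tuple, which automatically satisfies these equalities), but the paper records the condition because in the proof of Theorem~\ref{t:main} the parameters for $T$ are replaced by existentially quantified variables, and there one wants $\psi$ to \emph{assert} that $A_i$ and $T_i$ generate the same subgroup rather than merely to pick out a tuple that happens to do so in~$G$.
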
 % removed modulo $G_{i-1}$ 
\begin{proof}
Note first that $r\leq \log|G|$.
Let  $A_1 \sub  \ldots \sub A_r =A\sub G$  be pre-processing sets computed by SLP's $\+ L_1 \prec \ldots \prec \+ L_r=\+ L$ from $T_1 \subset  \ldots \subset  T_r=T$  according to Lemma~\ref{lem:Babai_super}. Recall that the reduced length of $\+ L_i$ is at most $\log^2 |\seq{T_i}|$.

The formula  $\psi$   in free variables corresponding to the elements of  $T$ and~$A$ expresses that $A$ is a preprocessing set for $G$ over $T$. We first  build a formula $\psi_0$ in the same free variables. We  start with a prenex of  existential quantifiers that refers  to the
sequence of elements of the SLP $\+ L$ that are not in $A$. The formula  $\psi_0$ expresses    each member of the sequence as the product of two previous elements, or inverse of a previous element,  according to $\+L$.  Then $\psi_0$ has  length   $O(\log^2 |G|)$.

To build $\psi$, we use the formulas $\alpha_{|T_i|}(y, T_i)$ and $\alpha_{|A_i|}(y,A_i)$ from Lem\-ma~\ref{generation},  of length $O(\log |G_i|)$, to also  express that $\seq{A_i}=\seq{T_i}, i\leq r$. Then  $\psi$ has length $O(\log^2|G|)$. \end{proof}  
%To verify the last part of the statement,  
% note that by construction we have 
%$|A_i\setminus A_{i-1}|\leq \log |G_i/G_{i-1}|$. If $A_{i-1}$ has already been  %run on sentence fixed
%constructed in $j=\log|G_{i-1}|$ steps, then we see that the $G_{i-1}$-cosets of the new elements $z_{j+1}\in A_i$ also work when we do the same construction modulo $G_{i-1}$.\marginpar{Versteh ich nicht. Work??}
%No empty line pls

%
The formulas $\alpha_k$   in Lemma~\ref{generation}   have  about $2\log v$  quantifier alternations for $k>0$, and use negation. Via  the argument in Lemma~\ref{lem:Babai_super}, we can obtain existential formulas  without negation symbols  that are somewhat longer.   %We formulate the existence of   an SLP   obtained in Corollary~\ref{lem:SLP}, and use that  the structure of the  possible SLPs is simple in that we know which previous elements the present element   depends on.
 
\begin{lemma} \label{generation2}
For each  pair of positive integers $k,v$, there exists an existential   negation-free
  first-order formula  $\beta(g;x_1,\ldots,x_k)$   of length $O(k \log v +\log^2 v)$ such that   for each group $G$ of size at most $v$,  \bc ${G \models \beta(g;x_1,\ldots,x_k)}$ if and only if ${g \in \langle x_1,\ldots,x_k \rangle}$. \ec
\end{lemma}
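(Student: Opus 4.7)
The plan is to encode directly the Babai--Szemer\'edi SLP construction from Lemma~\ref{lem:Babai_super} as a short existential formula. Setting $s = \log v$, we existentially introduce variables $z_1,\ldots, z_s$ intended to play the role of the non-generator elements of an SLP producing a preprocessing set for $\seq{x_1,\ldots,x_k}$, together with auxiliary variables for partial products. For each $i < s$ we force $z_{i+1}$ to have the form $v_0^{-1} v_1 x$ with $v_0, v_1 \in K(i)$ and $x \in \{x_1,\ldots, x_k\}$, and we finish by requiring $g = u_0^{-1} u_1$ for some $u_0, u_1 \in K(s)$. By Lemma~\ref{lem:Babai_super}, such $z_i$'s exist precisely when $g \in \seq{x_1,\ldots, x_k}$; if the SLP needed is shorter than $s$ steps we pad with the trivial choice $v_0 = v_1 = e$, $x = x_1$, which yields $z_{i+1} = x_1$ and leaves $K(s) \supseteq K(s')$.

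The crucial trick is to avoid an exponential blow-up in expressing that $v_0, v_1$ are $0/1$-products $\prod_{l\leq i} z_l^{\alpha_l}$ over all $2^i$ choices of the $\alpha_l$. Instead we build such a product sequentially: introduce auxiliary variables $y_0, y_1, \ldots, y_i$, set $y_0 = e$, and include the negation-free conjunction
\[
\bigwedge_{j=0}^{i-1}\bigl(y_{j+1} = y_j \; \vee \; y_{j+1} = y_j \circ z_{j+1}\bigr),
\]
which has length $O(i)$, and identify $v_0$ with the terminal variable $y_i$. Analogously for $v_1$. The requirement that $x$ lies in $\{x_1,\ldots,x_k\}$, combined with the defining equation for $z_{i+1}$, is written as $\bigvee_{l=1}^k (v_0 \circ z_{i+1} = v_1 \circ x_l)$, which both avoids the inverse symbol (by rearranging $z_{i+1} = v_0^{-1} v_1 x_l$) and has length $O(k)$. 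The total cost attributable to index $i$ is $O(i + k)$; summing over $i = 1, \ldots, s-1$ gives $O(s^2 + sk)$, and the analogous clause for $g$, rewritten as $u_0 \circ g = u_1$, adds another $O(s)$. This yields the desired bound $O(\log^2 v + k \log v)$.

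The main obstacle is reconciling the negation-free, purely existential form with a polylogarithmic length bound. The $\forall$-quantifier in Lemma~\ref{generation} cannot be replaced na\"ively by an inner disjunction over all possible products without producing a formula of exponential length. The construction above sidesteps this by exploiting the highly constrained structure of the Babai--Szemer\'edi SLP: at each step only one new element $z_{i+1}$ is added, membership in $K(i)$ is captured by local binary choices, and the choice of generator contributes only a single $k$-fold disjunction per step. Inverses are dealt with implicitly by moving them to the other side of the two relevant equations, so the final formula uses only $\exists$, $\wedge$, $\vee$, and atomic equations of the monoid language.
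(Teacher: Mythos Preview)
Your proposal is correct and follows essentially the same approach as the paper: encode the Babai--Szemer\'edi preprocessing construction by existentially quantifying over $z_1,\ldots,z_s$, express membership in $K(i)$ via sequential binary choices $y_{j+1}\in\{y_j, y_j z_{j+1}\}$, and finish with $g\in K(s)^{-1}K(s)$, for the same $O(k\log v+\log^2 v)$ count. The only cosmetic differences are that the paper factors the generator choice through extra variables $y_0,\ldots,y_s$ (each constrained by $\bigvee_r y_i=x_r$) rather than inlining the $k$-fold disjunction, and it writes $z_{t+1}=p_t^{-1}q_ty_t$ directly with a group inverse rather than rearranging to $v_0 z_{i+1}=v_1 x_l$ as you do; your rewriting to avoid the inverse symbol is a small bonus not present in the paper's formula.
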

We note that in applications we will have $k \le \log v$ so that the length is $O(\log^2 v)$.
\begin{proof}     The formula describes the generation of a SLP computing  $g$ from $x_1, \ldots, x_k$ according to the  proof of Lemma~\ref{lem:Babai_super},  for  a single set $S$.
The existentially quantified variables $z_1, \ldots, z_s$, where $s= \log  v$,  correspond to  the preprocessing set $A$, while $z_{s+1}$ equals $g$. The rest of the formula expresses that each $z_{t+1}$ for $t<s$ has the form $v_0^{-1}v_1x$ for $v_0, v_1 \in K(t)$ and a generator $x$, and that $g = v_0^{-1}v_1$ for $v_0, v_1 \in K(s)$. In detail,  let $ \beta(g;x_1,\ldots,x_k) \equiv$
\[ 
\begin{split}\ex z_1, \ldots , z_{s+1} \ex y_0, \ldots , y_s  [ g= z_{s+1} \land y_s = 1 \land (\bigwedge_{0\le i < s} \bigvee_{1 \le r \le k} y_i = x_r ) \land  \\
\bigwedge_{0 \le t \le s} \ex p_0, \ldots, p_t \ex q_0, \ldots, q_t [p_0=q_0 =1 \land z_{t+1} = p_t^{-1} q_t y_t  \ \land \ \ \ \ \ \ \ \  \\
 \bigwedge_{0 \le j <  t}  [(p_{j+1} = p_j z_j \lor p_{j+1}= p_j) \land (q_{j+1} = q_j z_j \lor q_{j+1}= q_j)]]] \end{split} \]

 \n  Clearly $\beta$ has length $O(k \log v +\log^2 v)$.
\end{proof}

%\begin{proof}  Let $\psi_0$ be the formula obtained in  the proof of Lemma~\ref{lem:Babai_super} for  a single set $S= \{x_1, \ldots, x_k\}$.    According to  the argument in that  proof, 
%let $z_1, \ldots, z_s$ be the variables for the preprocessing set $A$ where $s= \log^2 v$. Each $g \in G$ has the form $v_0^{-1} v_1$ for $v_0, v_1 \in K(s)$.  So let $\beta_k$ express that there exist  $z_1, \ldots, z_s$ satisfying $\psi_0$, and that there are $p_1, \ldots, p_s$, $q_1, \ldots, q_s$, with $p_r, q_r$ either $1$ or $z_r$, such that $g = (\prod_r  p_r)^{-1} \prod_r q_r$.  Clearly $\beta$ is of length $O(k+\log^2 v)$.
%\end{proof}

%We illustrate   the formulas $\beta(g;x_1,\ldots,x_k)$ by giving a version of Proposition~\ref{c:cyclic} with a bounded number of quantifier alternations.
%\begin{proposition}\label{c:cyclic-bounded}
%The class of    cyclic groups $G$    of prime power order is   $\log^2$-compressible  using  $\Sigma_3$ sentences  in the language of groups. 
%\end{proposition}
%\begin{proof} It suffices to slightly modify the proof of Prop.~\ref{c:cyclic}: to express the condition that $h$ generates the group, one uses    Lemma  \ref{generation2} instead of Lemma~\ref{generation}.
%\end{proof}
%%%%%%%%%%%%%%%%%%saff

\section{Describing finite fields and finite difference fields}

 Recall that a finite field $\FF$ has size  $q=p^n$ where $p$ is a prime called the characteristic of~$\FF$. For each such $q$  there is a unique field $\mathbb F_q$ of size $q$. Let  $\Fr_p$ denote the Frobenius automorphism $x \to x^p$  of  $\mathbb F_q$.  The group of automorphisms of $\mathbb F_q$ is cyclic of order $n$ with $\Fr_p$ as a generator. In particular,  $(\Fr_p)^n$ is the identity on $\FF_q$.
 
A  \emph{difference field} $(\FF,\sigma)$ is a field $\FF$ together with
a distinguished automorphism~$\sigma$. Examples are the field of complex numbers with complex conjugation
%, dieses Komma zB ist zu viel (erst bei laengeren Aufzaehlungen kann man eins setzen
and  finite fields of characteristic $p$ with  a fixed power of the Frobenius automorphism.  We show that finite   fields and finite difference fields %of a fixed characteristic \marginpar{brauchen wir fixed char??}$p$  NOO
are    $\log$-compressible in  the language of rings $L(+, \times, 0,1)$, extended by a unary function symbol $\sigma$ in the second case. Besides providing another example for our main Definition~\ref{def:compress}, this will be used in one case of the proof of our first  main result,  Theorem~\ref{t:simple}.

  \begin{proposition}\label{L:fields}   \mbox{}  
  
\n   {\rm (i)}   For any  finite field $\mathbb F_q$,   there is a  $\Sigma_3$-sentence  $\varphi_{q} $ of length $O (\log q)$  in  $L(+, \times, 0,1)$   describing~$\mathbb F_q$.

\n {\rm (ii)}  For any finite difference field  $(\mathbb F_q, \sigma)$ there is a $\Sigma_3$-sentence  $\psi_{q, \sigma} $  of length $O (\log q)$   in $ L(+, \times, 0,1, \sigma)$    describing $\la \mathbb F_q, \sigma\ra $. 

\n   {\rm (iii)}   For any  finite field $\mathbb F_q$, and  any   $c\in \mathbb F_q$, there is  a  $\Sigma_3$-formula   $\varphi_{c}(x) $ of length $O (\log q)$ in  $L(+, \times, 0,1)$   describing the structure $\la \mathbb F_q, c \ra$.
\end{proposition}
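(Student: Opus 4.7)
The plan is, for each $q = p^n$, to fix a monic irreducible polynomial of degree $n$ over $\mathbb F_p$ and describe $\mathbb F_q$ by combining the field axioms, the characteristic equation $p\cdot 1 = 0$, the splitting condition $\forall x\,(x^q = x)$, and an existential witness for a root of the chosen polynomial. First I would fix some $f(X) = X^n + a_{n-1}X^{n-1} + \cdots + a_0 \in \mathbb F_p[X]$ irreducible; such an $f$ exists for every prime $p$ and $n \ge 1$. Each coefficient $a_i \in \{0,\ldots,p-1\}$ becomes a closed field term of length $O(\log p)$ via the repeated-doubling trick underlying Lemma~\ref{repeated squaring} (applied to addition inside $\mathbb F$), and evaluating $f(\alpha)$ by Horner's rule yields an equation $f(\alpha)=0$ of length $O(n\log p) = O(\log q)$ in the single variable $\alpha$. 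Lemma~\ref{repeated squaring} also gives a formula for $x^q = x$ of the form $\exists\bar y\,\theta(x,\bar y)$ with $\theta$ quantifier-free and length $O(\log q)$. I would then let $\varphi_q$ be
\[
\exists \alpha \, \forall x \, \exists \bar y \,\bigl[\text{field axioms} \wedge p\cdot 1 = 0 \wedge f(\alpha)=0 \wedge \theta(x,\bar y)\bigr],
\]
where the universal field axioms (over additional variables $a,b,c,\ldots$) are folded into the middle $\forall$ block. The prenex is $\exists\forall\exists$, so $\varphi_q$ is in $\Sigma_3$, and the total length is $O(\log q)$.

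For correctness of (i), any model $\mathbb F$ of $\varphi_q$ is a field of characteristic $p$, and every element satisfies $x^q - x = 0$, a polynomial with at most $q$ roots, so $|\mathbb F| \le q$; conversely, the witness $\alpha$ is a root of the irreducible degree-$n$ polynomial $f$, so $\mathbb F_p(\alpha) \subseteq \mathbb F$ has degree $n$ over $\mathbb F_p$ and hence size $q$, forcing $\mathbb F \cong \mathbb F_q$. For (ii), since $\operatorname{Aut}(\mathbb F_q) = \langle \Fr_p\rangle$ one has $\sigma = \Fr_p^k$ for a unique $k \in \{0,\ldots,n-1\}$, and I would conjoin $\forall x\,(\sigma(x) = x^{p^k})$ to $\varphi_q$; Lemma~\ref{repeated squaring} expresses this in length $O(\log q)$, and the extra $\forall$ and $\exists$ blocks merge with the existing ones, preserving $\Sigma_3$ form. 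For (iii), using that $\{1,\alpha,\ldots,\alpha^{n-1}\}$ is an $\mathbb F_p$-basis of $\mathbb F_q$, I would write $c = c_0 + c_1\alpha + \cdots + c_{n-1}\alpha^{n-1}$ for unique $c_i \in \mathbb F_p$, reuse the $\alpha$ quantified in $\varphi_q$, and append the conjunct $x = c_0 + c_1\alpha + \cdots + c_{n-1}\alpha^{n-1}$ in Horner form, of length $O(n\log p) = O(\log q)$; since $\alpha$ is pinned down only up to its Frobenius orbit and the isomorphism type of $(\mathbb F_q,c)$ depends only on the $\operatorname{Aut}(\mathbb F_q)$-orbit of $c$, the resulting $\varphi_c(x)$ describes the pair.

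The main obstacle I anticipate is the length optimization: writing each monomial $\alpha^i$ separately by repeated squaring would cost an extra logarithmic factor, so Horner's rule is essential both for $f(\alpha)$ and for the $c$-expansion in (iii), and each integer coefficient must be represented as a closed field term of length $O(\log p)$ via repeated doubling of $1$. Once these encodings are in place, the $\Sigma_3$ quantifier structure falls out cleanly by placing the single $\exists\alpha$ in front of the $\forall\exists$ block supplied by Lemma~\ref{repeated squaring} and absorbing the universal field and characteristic axioms into the middle $\forall$ block.
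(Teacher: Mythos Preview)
Your proof is correct, but it takes a different route from the paper in two places. For (i), the paper bounds $|\mathbb F|$ from below by the single clause $\exists x\,(x^{p^{n-1}}\neq x)$, which via Lemma~\ref{repeated squaring} is a $\Sigma_2$-formula of length $O(\log q)$; you instead fix an irreducible $f\in\mathbb F_p[X]$ of degree $n$ and use $\exists\alpha\,(f(\alpha)=0)$, which requires the Horner encoding you describe but avoids a negated exponentiation formula. For (iii), the paper pins down $c$ up to $\operatorname{Aut}(\mathbb F_q)$ simply by asserting that $c$ is a root of its minimal polynomial over $\mathbb F_p$ (degree $\le n$, hence length $O(n\log p)$ by Horner), whereas you express $c$ in coordinates with respect to the power basis $1,\alpha,\ldots,\alpha^{n-1}$ coming from the $\alpha$ already quantified in (i). Your correctness argument for (iii) --- that the Galois orbit of $\alpha$ induces exactly the Galois orbit of $c$ via the coordinate expression --- is sound.

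The paper's approach is a bit more economical: it never needs to choose a specific irreducible polynomial for each $q$, and in (iii) it does not need to thread the witness $\alpha$ from (i) into the description of $c$. Your approach, on the other hand, makes the existential witness in (i) do double duty in (iii), and your explicit attention to Horner's rule and the binary-Horner representation of integers $<p$ as closed ring terms of length $O(\log p)$ is a point the paper leaves implicit. Both yield $\Sigma_3$-sentences of length $O(\log q)$; part (ii) is handled identically.
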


\begin{proof} {(i).}  The sentence  $\phi_q$ says that the structure is a field of characteristic $p$ such that for all elements $x$ we have $x^{p^n}=x$ and there
is some $x$ with  $x^{p^{n-1}}\neq x$. By Lemma~\ref{repeated squaring}  one can ensure that $|\phi_q| = O (\log q)$ and the sentence $\phi_q$ is $\Sigma_3$.

\n {(ii).} Since any automorphism of $\FF_q$ is of the form $(\Fr_p)^k$
for some $k\leq n$, we can use  Lemma~\ref{repeated squaring}   again   to
find a sentence of length $O(\log q )$ expressing that
$\sigma(x)=x^{p^k}$ for each~$x$.

\n {(iii).}  
  By (i) it suffices to give a formula that determines   $c$ within  $\FF_q$ up to an automorphism of the field. Let $q=p^n$ as above.   Since $\FF_q$ is a Galois extension of $\FF_p$ of degree $n$, any  $c\in\FF_q$  is determined within $\FF_q$ up to an automorphism of the field  by being a zero of its  minimal
polynomial over $\FF_p$. This polynomial  has degree at most $n$. So we can apply  Lemma~\ref{repeated squaring} repeatedly to express that $c$ is a zero of the polynomial  by a   formula  of  length $O(n \cdot \log p)= O(\log q)$.   
\end{proof} 

%  Alternative proofs  1.
      
%   1.
%   Obviously, it suffices to describe the generators of the
%   multiplicative group of $F_q$ up to isomorphims in $F_q$.
%   Such an element $a\in F_q$ is determined -- up to isomorphism --
%   by the fact that its multiplicative order is $q-1$ and
%   by the element  $b\in F_p$ such that
%  
%   \[b=\Sigma_{i=1}^n a^{p^i}\in F_p\]
%  
%   Since the element $b\in F_p$ is definable as a sum of $1$s,
%   this is expressible in O(log q).
%  
%  
%   2.
%   Obviously, it suffices to describe the generators of the
%   multiplicative group of $F_q$ up to isomorphims in $F_q$.
%   Such an element $a\in F_q$ is determined -- up to isomorphism --
%   by its minimal polyonomial
%  
%   \[f(X)=\Pi_{i=1}^{n}(X-a^{p^i})\in F_p[X].\]
 
%\begin{corollary}\label{c:fieldtuple-old}
% For any  finite field $\mathbb F_q$, any tuple of elements in $\mathbb F_q$ can be described, up to an automorphism of $\mathbb F_q$,  by a  $\Sigma_3$-sentence  in  $L(+, \times, 0,1)$  of length $O (\log q)$. 
%\end{corollary}
\begin{corollary}\label{c:fieldtuple}
The class of   finite fields is strongly $\log$-compressible (as defined after Def.\ \ref{def:compress})  via   $\Sigma_3$-sentences  in  $L(+, \times, 0,1)$. 
\end{corollary}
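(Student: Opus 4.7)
The plan is to upgrade Proposition~\ref{L:fields}(iii) from a single distinguished element to a $k$-tuple $\ol c = (c_1, \ldots, c_k) \in \FF_q^k$ by anchoring all the $c_i$ to a common primitive element of $\FF_q$ over its prime subfield. Since every extension of finite fields is simple, first fix $\theta \in \FF_q$ with $\FF_p[\theta] = \FF_q$, and let $m(x) \in \FF_p[x]$ be its minimal polynomial, which is monic and irreducible of degree $n$ where $q = p^n$. For each $i \leq k$, write $c_i = f_i(\theta)$ with $f_i \in \FF_p[x]$ of degree less than $n$. The describing formula would then be
\[
\phi(y_1, \ldots, y_k) \equiv \varphi_q \wedge \exists z \Bigl( m(z) = 0 \wedge \bigwedge_{i=1}^{k} y_i = f_i(z) \Bigr),
\]
where $\varphi_q$ is the $\Sigma_3$-sentence describing $\FF_q$ from Proposition~\ref{L:fields}(i).

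For correctness, any model $(H, d_1, \ldots, d_k) \models \phi$ has $H \cong \FF_q$ by $\varphi_q$ and contains some $z \in H$ with $m(z) = 0$. Irreducibility of $m$ over $\FF_p$ gives a ring embedding $\FF_p[\theta] \to \FF_p[z] \sub H$ sending $\theta \mapsto z$; cardinality forces this to be an isomorphism $\FF_q \to H$, and it carries $c_i = f_i(\theta)$ to $f_i(z) = d_i$, which is the required isomorphism of pointed structures.

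For the length and quantifier complexity, $|\varphi_q| = O(\log q)$ by part~(i), and each polynomial identity $m(z) = 0$ or $y_i = f_i(z)$ is encoded using Horner's rule by introducing $n$ existentially quantified intermediate products. The coefficients are integers in $\{0, \ldots, p-1\}$, expressed as multiples of $1$ in the additive monoid of the field via Lemma~\ref{repeated squaring}, adding $O(\log p)$ per coefficient. Each polynomial identity then has length $O(n \log p) = O(\log q)$, and the $k+1$ of them together contribute $O(\log q)$ with the fixed $k$ absorbed into the constant. The main organizational point is keeping the overall formula within $\Sigma_3$: since $\varphi_q$ starts with an existential block and the newly added $\exists z$ together with all intermediate existentials from the Horner and squaring encodings are themselves existential, after prenexing they slot into that leading $\exists$-block, so the combined formula remains $\Sigma_3$.
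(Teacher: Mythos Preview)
Your proof is correct. Both your argument and the paper's rest on the same core idea: anchor the tuple to a single reference element of $\FF_q$ that is pinned down up to a field automorphism via Proposition~\ref{L:fields}(iii), and then express each $c_i$ relative to that element by a short existential formula. The difference is only in the choice of reference element and the encoding. The paper picks a generator $b$ of the cyclic multiplicative group $\FF_q^*$ and writes each nonzero $c_i$ as $b^{e_i}$, so that Lemma~\ref{repeated squaring} applied multiplicatively handles each exponent $e_i < q$ in length $O(\log q)$. You instead pick a primitive element $\theta$ for the extension $\FF_q/\FF_p$ and write each $c_i = f_i(\theta)$ as a polynomial of degree less than $n$, using Horner evaluation together with Lemma~\ref{repeated squaring} additively for the $\FF_p$-coefficients. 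Both encodings give $O(\log q)$ per element and are purely existential, so your prenexing argument for the $\Sigma_3$ bound applies equally to the paper's version. Your route has the virtue of reusing verbatim the polynomial encoding already set up in the proof of Proposition~\ref{L:fields}(iii); the paper's route is marginally slicker in that a single exponent replaces $n$ coefficients.
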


\begin{proof}
By Proposition~\ref{L:fields}(iii), a generator $b$ of the
multiplicative group of $\FF_q$ can be determined within $\FF_q$  up to automorphism by a $\Sigma_3$-formula  $\varphi_{q}(x) $  of length $O (\log q)$. In order to determine  a finite tuple of field elements  up to automorphism,   it thus suffices to pin down  the corresponding tuple of exponents of $b$. Since these exponents are
bounded by $q-1$, via Lemma~\ref{repeated squaring} this can be done  with  a formula of length $O (\log q)$.
\end{proof}
The following shows   that the  upper bound of $O(\log q)$ on the length of a sentence  describing $\mathbb F_q$  is  close to  optimal for infinitely many $q$.   
\begin{proposition}  \label{rem:fields optimal}  There is a constant $k>0$ such that for infinitely many primes $q$, for any      description $\varphi$ for $\FF_q$, we have 
\bc $ \log (q )  \le k |\varphi| \log  |\varphi|$.\ec
\end{proposition}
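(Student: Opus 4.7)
The plan is to argue by a counting comparison between the number of primes up to $N$ and the number of short first-order descriptions available. For each prime $q$ fix a shortest description $\varphi_q$ of $\FF_q$ and set $L(q) = |\varphi_q|$. Since $x \mapsto x \log x$ is increasing, it suffices to produce a constant $k$ and infinitely many primes $q$ for which $\log q \le k\, L(q) \log L(q)$; the same inequality then holds automatically for every description $\varphi$ of $\FF_q$, since $|\varphi| \ge L(q)$.

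Two counting ingredients enter. First, the assignment sending a prime $q$ to the binary code of $\varphi_q$ is injective: each description pins down its model up to isomorphism, and distinct primes give non-isomorphic fields. Second, by the discussion of binary codes in the introduction of the excerpt, a formula of symbol length at most $L$ admits a binary code of length at most $c_0 L \log L$ for some absolute constant $c_0$; hence the number of formulas with $|\varphi| \le L$ is at most $2^{c_0 L \log L + 1}$.

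Next I would call a prime $q$ \emph{bad} if $L(q) \log L(q) < \log q / k$. For a bad $q \le N$, the binary code of $\varphi_q$ has length strictly less than $c_0 \log N / k$, so injectivity bounds the number of bad primes $q \le N$ by a constant times $N^{c_0/k}$. On the other hand, Chebyshev's estimate (or the prime number theorem) gives $\pi(N) \ge N/(2 \ln N)$ for $N$ large. Choosing any integer $k > c_0$ makes the bad primes a vanishing fraction of $\pi(N)$, so the set of good primes below $N$ grows without bound as $N \to \infty$; in particular there are infinitely many good primes, and any such $k$ witnesses the proposition.

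The argument is essentially Kolmogorov-style counting with the prime number theorem playing the role of the ``lower bound on the number of objects''. The only minor obstacle is bookkeeping the constant $c_0$ for the binary encoding of formulas, which has already been set up in the introductory remarks; everything else is routine.
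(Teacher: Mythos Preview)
Your proof is correct and follows essentially the same approach as the paper: both combine the prime number theorem with a pigeonhole/incompressibility argument on the binary encodings of descriptions. The only cosmetic difference is that the paper phrases the counting via Kolmogorov complexity (choosing $q = p_n$ for a Kolmogorov-random $n$, so that $C(q) =^+ \log n =^+ \log q$), whereas you carry out the pigeonhole count directly; the underlying mathematics is identical.
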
 
 
  \begin{proof} Let $C(n)$ denote  the Kolmogorov complexity of the binary expansion of a natural number~$n$.   A sentence  $\varphi$  describing $\mathbb F_q$ also yields a description of  the number~$q$. Therefore  $C(q) \le  k'  |\varphi| \log  |\varphi| $ for some $k'$, where the corrective     factors  are  needed because the  string $\varphi$ over an infinite alphabet has to be encoded  by a binary string in order to serve as a description in the sense of Kolmogorov complexity.
  
  Infinitely many  $n\in \NN$ are  random numbers,  in that  $C(n) =^+ \log_2 n$  (the superscript $+$ means that the inequality holds up to a constant). Now let $q= p_n$, the $n$-th prime number, so that $C(q) =^+ C(n) = ^+ \log_2 n$. By the prime number theorem  $p_n/ \ln (p_n)  \le 2n$ for large $n$,  so that  $   \log (q/\ln q) \le^+ \log_2 n$. Note that  $\sqrt q \le q/\ln q $ for $q \ge 3$ so that $\log q -1 \le \log (q/\ln q)$. Choosing $k \ge k'$ appropriately and putting the inequalities together, we obtain  $\log q  \le  k  |\varphi| \log  |\varphi| $ as required.  \end{proof}

A similar  argument  shows that Proposition~\ref{c:cyclic}, for  descriptions of cyclic groups of prime order, is close to optimal.

%%%%%%%%%%%%%%
%%%%%%%%%%%%%%
\section{Describing finite simple groups}
 
The main result of  this section is the following.

 \noindent
{\bf Theorem \ref{t:simple}.}\emph{   The class of finite simple groups is  $\log$-compressible.  }

%:
\n We do not know whether the class of finite simple groups is \emph{strongly} $\log$-compressible (cf. Lemma \ref{lem:short_pres}, but see also Propositions~\ref{p:Ree}).
For the proof of Theorem~\ref{t:simple}, recall that any finite simple group belongs to one of the following classes:

%{\it Wenn wir die Erzeugermenge vergroessern wissen wir, dass es nicht sehr viel mehr Relatoren gibt, aber wir wissen erst einmal nix ueber deren Laenge}

\begin{enumerate}
\item the   finite cyclic groups $C_p,p$ a prime;
\item the     alternating groups $A_n, n\geq 5$;
\item the   finite simple groups $L_n(\FF_q)$ of fixed Lie type $L$ and Lie rank $n$, possibly twisted, over a finite field $\FF_q$;
\item the  $26$ sporadic simple groups.
\end{enumerate}
See e.g.\ \cite{Wilson:09}, Section 1.2.
\subsection{Short first-order descriptions via short presentations}  \label{ss:shsh}
Clearly for the proof of Theorem \ref{t:simple} we may disregard the finite set of sporadic simple groups. 
For most of
the other classes we will use that there exist short presentations. 
Recall   that a finite presentation of a group $G$ is given by a normal subgroup $N$ of a  free group $F(x_1, \ldots, x_k)$ such that $G =  F(x_1, \ldots, x_k)/N$, and $N$ is generated as a normal subgroup by relators $r_1, \ldots, r_m$. One  writes  $G= \la x_1, \ldots, x_k \mid r_1, \ldots , r_m \ra$.
\begin{definition}  
We define the \emph{length} of a presentation 
  $G= \la x_1, \ldots, x_k \mid r_1, \ldots , r_m \ra $
to be  $k +  \sum_j |r_j|$, 
where $|r_j|$ denotes the length of the relator $r_j$ expressed
as a word in the generators $x_i$ and their inverses.
\end{definition}

  \begin{lemma} \label{lem:short_pres} Suppose that a finite simple group $G$ has a presentation 
  
  \n
  $ \la x_1, \ldots, x_k \mid r_1, \ldots , r_m \ra$ of length $\ell$. Let $g_i $ be the image of $x_i$ in $G$, $i=1,\ldots, k$.
  
\n (i) There is a sentence  $\psi$  of length $ O(\log |G| + \ell)$ describing the structure  $\la G, \ol g \ra$.  

\n
(ii) There is a   $\Sigma_3$-sentence  $\psi$  of length
   $O( \log^2|G|  +    \ell)$  describing the structure  $\la G, \ol g \ra$, provided that $k \le \log |G|$.  
       \end{lemma}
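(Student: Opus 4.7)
The plan is to directly encode the presentation as a first-order formula, using the simplicity of $G$ to ensure correctness without having to encode the cardinality $|G|$. First I would write down the formula
$$\psi(\bar y) \;:=\; \Bigl(\bigwedge_{j=1}^m r_j(\bar y) = 1\Bigr) \;\land\; \exists z\,(z \ne 1) \;\land\; \forall w\, \gamma(w; \bar y)$$
in free variables $\bar y = y_1,\ldots,y_k$ (intended to be interpreted as the generators $g_1,\ldots,g_k$), where $r_j(\bar y)$ is obtained from the relator $r_j$ by substituting $y_i$ for $x_i$ and writing out the resulting word as a product in the group language, and $\gamma(w;\bar y)$ expresses ``$w \in \langle y_1,\ldots,y_k\rangle$'' via one of the generation formulas from Section~2.

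For correctness I would argue as follows: if $(H,\bar h)\models\psi$, then the first conjunct combined with the universal property of the presentation yields a surjective homomorphism $\pi\colon G \twoheadrightarrow \langle \bar h\rangle$ with $\pi(g_i)=h_i$. The third conjunct forces $\langle \bar h\rangle = H$, so $\pi$ is onto $H$. The second conjunct ensures $H$ is nontrivial, and simplicity of $G$ then forces $\ker\pi$ to be trivial. Thus $\pi$ is an isomorphism sending $g_i\mapsto h_i$, establishing $(H,\bar h) \cong (G,\bar g)$; the converse is immediate.

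For the length bounds, in part (i) I would take $\gamma = \alpha_k$ from Lemma~\ref{generation}, which has length $O(k+\log|G|)$; writing out the relators contributes $O(\sum_j |r_j|) = O(\ell)$, and since $k\le\ell$ we obtain $|\psi| = O(\ell + \log|G|)$. For part (ii), I would instead take $\gamma = \beta$ from Lemma~\ref{generation2}, an existential negation-free formula of length $O(k\log|G| + \log^2|G|)$, which equals $O(\log^2|G|)$ under the hypothesis $k\le\log|G|$. Pulling the inner existentials of $\beta$ through the $\forall w$ and combining with the $\exists z$ gives a prenex formula of shape $\exists z\,\forall w\,\exists \bar u\,[\text{quantifier-free}]$, i.e.\ a $\Sigma_3$-formula, of total length $O(\log^2|G| + \ell)$.

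The main conceptual point, rather than a genuine obstacle, is that simplicity of $G$ is essential: without it, any quotient of $G$ satisfying the same relations would also satisfy $\psi$, and pinning down $G$ itself up to isomorphism would require encoding its cardinality in a length-efficient way -- precisely the more delicate issue addressed later for arbitrary finite groups via composition series and group cohomology.
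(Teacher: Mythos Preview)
Your proposal is correct and essentially identical to the paper's proof: the paper writes the formula $x_1 \neq 1 \land \bigwedge_i r_i = 1 \land \forall y\, \alpha_k(y;\bar x)$ (using $\beta_k$ for part~(ii)), invokes the same two generation lemmas, and argues that simplicity forces the only nontrivial quotient to be $G$ itself. The sole cosmetic difference is that the paper uses $x_1 \neq 1$ rather than your $\exists z\,(z\neq 1)$ to rule out the trivial group; your version is arguably slightly more robust (it does not presume $g_1\neq 1$), but the effect and length bounds are the same.
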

  \begin{proof} (i).   Let $\psi$  be 
\bc   $ x_1 \neq 1 \lland \bigwedge_{1 \le i \le m}  r_i = 1 \lland    \fa y \, \alpha_k (y; x_1, \ldots, x_k),$ \ec
  where $\alpha_k$ is the formula from Lemma~\ref{generation}  of length   $O(k+\log|G|)$   expressing  that $y$ is   generated from the $x_i$ within $G$. Replacing the $x_1,\ldots, x_k$ by new constant symbols, the models of the sentence thus obtained are the  nontrivial quotients of $G$. Since $G$ is simple, this sentence describes~$\la G, \ol g \ra$.
  
  \n 
   (ii) is similar, using the formula $\beta_k$ from Lemma~\ref{generation2} instead of $\alpha_k$.
\end{proof}

For most classes of finite simple groups, Guralnick et al.\ \cite{Guralnick:08} obtained a  presentation for each member $G$ that is very short compared to $|G|$. %Subsection~\ref{ss:interpretations} contains some detail on Ree groups.

\begin{theorem}{\rm \cite[Thm.\ A]{Guralnick:08}}  \label{thm:Guri} There is a constant $C_0$ such that 
any nonabelian finite simple group, with the possible exception of the 
Ree groups  of type $^2G_2$, has a presentation with at most $C_0$ generators and 
relations and   length at most $C_0(\log n +\log q)$, where $n$ denotes the Lie  rank of the group and $q$
the order of the corresponding field.
\end{theorem}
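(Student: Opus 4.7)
The plan is to invoke the classification of finite simple groups to reduce to three families (alternating, Lie type untwisted, and Lie type twisted except $^2G_2$), then exhibit for each family a presentation whose length is logarithmic in the natural size parameters. Throughout, the bounded-number-of-generators requirement forces us to work with very economical generating sets: two generators typically suffice for a nonabelian simple group, so the main budget is spent on encoding relations.

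First I would handle the alternating groups $A_n$ using a Carmichael-style presentation on two generators (for instance the $n$-cycle and a transposition, or Coxeter-type generators), whose defining relations are word identities of length polynomial in $n$. Iterating repeated-squaring as in Lemma~\ref{repeated squaring} (but written now in the free group as powers of a single letter, expressed via a small straight-line program of auxiliary generators and relations $y_{i+1}=y_i^2$) lets one trade a relation of length $m$ for a set of $O(\log m)$ shorter relations, yielding total length $O(\log n)$. Next, for the untwisted finite simple groups of Lie type I would use the Curtis--Steinberg--Tits presentation: generators are the (positive and negative) simple root elements $x_\alpha(t)$, and the relations encode the rank-$1$ and rank-$2$ subsystems (Chevalley commutator formulas). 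The number of root subgroups is bounded by the Lie rank~$n$, and within each root subgroup one needs to encode the additive structure of $\mathbb F_q$. Here the key trick is to introduce auxiliary generators $t_0,t_1,\dots,t_s$ with $s=O(\log q)$ playing the role of a power basis of $\mathbb F_q$ over $\mathbb F_p$, plus a few relations pinning down addition, multiplication, and the minimal polynomial of a primitive element — essentially translating Proposition~\ref{L:fields}(i) into the language of presentations. This costs $O(\log q)$, and combined with $O(\log n)$ rank-$2$ commutator relations yields the claimed bound $O(\log n + \log q)$.

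For the twisted groups other than $^2G_2$ (i.e.\ $^2A_n,\,^2D_n,\,^3D_4,\,^2E_6$, the Suzuki groups $^2B_2$ and the large Ree groups $^2F_4$), I would obtain presentations as fixed points of a graph-field automorphism $\tau$ of the untwisted presentation. One writes down the untwisted presentation together with a few extra generators $y_j = x_{\alpha_j}(\tau t)$ and relations expressing $\tau$ as a permutation of the simple root subgroups combined with a power of the Frobenius. Because $\tau$ is described by finite combinatorial data plus the short field description, the total length stays $O(\log n + \log q)$. The Suzuki case ($^2B_2(q)$, $q=2^{2k+1}$) is special: there is no ordinary graph automorphism, and one exploits instead Ono/Tits's presentation using a distinguished automorphism of $\mathbb F_q$ whose square is Frobenius — this too can be encoded in $O(\log q)$ symbols.

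The main obstacle, and the reason the Ree groups $^2G_2$ are excluded from the statement, is precisely that the Tits-style ``square root of Frobenius'' construction that works so cleanly for $^2B_2$ and $^2F_4$ does not yield a presentation of bounded width with short relations for $^2G_2(q)$ in characteristic~$3$: the known presentations carry a number of relations growing with $q$, and no uniform scheme with $O(1)$ relations of total length $O(\log q)$ was available at the time. This is why Theorem~\ref{thm:Guri} qualifies the statement with ``possible exception'', and why the present paper resorts to the bi-interpretation with a difference field (via Ryten) to handle $^2G_2(q)$ directly in first-order logic rather than through a short presentation.
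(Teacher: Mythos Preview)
This theorem is not proved in the present paper at all: it is quoted as Theorem~A of \cite{Guralnick:08} and used purely as a black box. There is therefore no proof here to compare your proposal against; the paper simply invokes the result and immediately combines it with Lemma~\ref{lem:short_pres} to obtain Proposition~\ref{p:simple}.

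That said, your sketch is a reasonable high-level outline of the strategy actually used in \cite{Guralnick:08}: reduce via the classification, give short presentations for alternating groups, use Curtis--Steinberg--Tits type presentations for the untwisted groups of Lie type with the field encoded by a small number of auxiliary generators, and obtain the twisted groups (except $^2G_2$) as fixed-point subgroups. Be aware, though, that the details in \cite{Guralnick:08} are considerably more delicate than your outline suggests --- in particular for the alternating groups they do not simply compress a classical Carmichael/Coxeter presentation via repeated squaring but build a recursive bounded-generation scheme, and the bounded-rank Lie-type case requires careful bookkeeping to keep the number of relations uniformly bounded rather than growing with the rank. Your explanation of why $^2G_2$ is excluded matches both the cited paper and the way the present paper works around it via Ryten's bi-interpretability result.
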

Note that,  following Tits, they  considered the alternating groups $A_n$
as groups of Lie rank $n-1$ over the ``field''  $\FF_1$ with one element.  For more detail see their  remark before \cite[Thm~A]{Guralnick:08}.

\begin{proposition}\label{p:simple}  (i).  The class of finite simple groups, excluding  the   Ree groups of type $^2G_2$, is $\log$-compressible. 

\n (ii). The same class is  $\log^2$-compressible using $\Sigma_3$-sentences. 
\end{proposition}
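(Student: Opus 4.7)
The plan is to combine the short presentations furnished by Theorem~\ref{thm:Guri} with Lemma~\ref{lem:short_pres}, handling the abelian case separately. First I would dispose of the cyclic groups $C_p$, the only abelian finite simple groups, which are $\log$-compressible (in fact by $\Sigma_3$-sentences in the language of monoids) via Proposition~\ref{c:cyclic}. For every other finite simple group, excluding $^2G_2$, Theorem~\ref{thm:Guri} furnishes a presentation with at most $C_0$ generators and relations and total length $\ell \le C_0(\log n + \log q)$, where $n$ denotes the Lie rank and $q$ the order of the ground field (with the Tits convention $q=1$ and $n-1$ in place of the rank for the alternating group $A_n$).

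The key verification is that $\ell = O(\log|G|)$ for each remaining class of the classification. For $A_n$ one has $\ell = O(\log n)$ while $\log|A_n| = \Theta(n \log n)$. For a classical group of Lie type $L_n(\FF_q)$, the standard order formulas yield $\log|G| = \Theta(n^2 \log q)$, which easily dominates $\log n + \log q$. For the exceptional Lie types, $n$ is bounded by a constant while $\log|G| = \Theta(\log q)$. Hence in every remaining case $\ell = O(\log|G|)$. Applying Lemma~\ref{lem:short_pres}(i) to such a presentation produces a sentence of length $O(\log|G| + \ell) = O(\log|G|)$ describing $\langle G, \bar g\rangle$; existentially quantifying the $k \le C_0$ generators $\bar g$ (a cost of $O(1)$ symbols) gives a description of $G$ alone, proving (i).

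For (ii), I would apply Lemma~\ref{lem:short_pres}(ii) in the same way. Its proviso $k \le \log|G|$ is automatically met because the Guralnick et al.\ presentation has $k \le C_0$, so the proviso holds for all but finitely many $G$ in the class; the remaining finitely many exceptions can each be described by a $\Sigma_3$-sentence of bounded length (e.g.\ absorbed into the $O$-constant). The resulting $\Sigma_3$-sentence has length $O(\log^2 |G| + \ell) = O(\log^2 |G|)$. The only real obstacle is checking the size estimates case-by-case against the presentation-length bound; once those are in hand, the proposition follows as a direct application of Theorem~\ref{thm:Guri} and Lemma~\ref{lem:short_pres}.
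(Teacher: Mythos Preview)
Your proposal is correct and follows essentially the same route as the paper: handle $C_p$ via Proposition~\ref{c:cyclic}, then for all other cases combine the short presentations of Theorem~\ref{thm:Guri} with Lemma~\ref{lem:short_pres}(i) (resp.\ (ii)) after verifying $\log n+\log q=O(\log|G|)$, and finally existentially quantify the bounded list of generators. The paper compresses your case analysis into the single observation $|G|\ge q^n$ (with the alternating case treated separately), but the argument is the same.
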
 
\begin{proof} For  cyclic simple    groups, this   follows from  Proposition~\ref{c:cyclic}. Now consider a finite simple group $G=L_n(\FF_q)$, that is, $G$ is  of Lie rank $n$ with corresponding field $\FF_q$. Suppose $G$  is not a    
   Ree group of type $^2G_2$.  We have
   $\log n+\log q \le \log |G|$: This is clear for the alternating groups $A_n$ because $q=1$ and  $|A_n|=n!/2$. Otherwise,   the calculations of sizes of finite simple groups in e.g.\ 
   \url{http://en.wikipedia.org/wiki/List\_of\_finite\_simple\_groups} (August 2014) or  Wilson~\cite{Wilson:09}  show that $|G| $ is at least $q^n$. %Then $\log n + \log q = O(|G|)$. 

   Now by   the foregoing theorem,  together with   Lemma \ref{lem:short_pres}~(i) replacing the constants  by  variables $x_i$, we obtain a formula $\psi(x_1, \ldots, x_{C_0})$ of length  $O(\log |G|)$. Then the sentence  $\phi \equiv \ex x_1 \ldots \ex x_{C_0} \, \psi$ is as required for (i). For (ii) we use  Lemma \ref{lem:short_pres}~(ii) instead. \end{proof}

 We also note the following:
 
\begin{proposition}\label{p:short_presentation}
For any function $f:\NN\to \NN$, the class of finite groups $G$
with a presentation of total length $f(|G|)$ is strongly $(f+\log^2)$-compressible.

\end{proposition}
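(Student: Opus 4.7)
Given a finite group $G$ with a presentation $\la x_1,\ldots,x_k\mid r_1,\ldots,r_m\ra$ of total length $\ell \le f(|G|)$ and a tuple $\ol g=(g_1,\ldots,g_t)\in G^t$, the plan is to describe $(G,\ol g)$ by a first-order sentence of length $O(\ell + \log^2|G|)$, combining the presentation with a composition-series description built on top of Theorem~\ref{t:simple}.  Fix a composition series $1=G_0\lhd G_1\lhd\cdots\lhd G_r=G$ with $r\le \log|G|$ and simple factors $H_i=G_{i+1}/G_i$; by Theorem~\ref{t:simple} each $H_i$ has a describing sentence $\phi_i$ of length $O(\log|H_i|)$.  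Using Lemma~\ref{lem:Babai_super}, pick an ascending sequence of swift pre-processing sets $A_1\subseteq\cdots\subseteq A_r$ for the $G_i$'s with $|A_i|\le\log|G_i|$; Lemma~\ref{l:preproc_formula} then provides a formula $\psi(\ol T;\ol A)$ of length $O(\log^2|G|)$ defining the whole sequence $\ol A$ by an SLP from generating parameters $\ol T$ (the lifts of $H_i$-generators used in the construction).

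For each $i$, form the relativization $\widehat\phi_i$ of $\phi_i$ with respect to the pair $(\la\ol A_{i+1}\ra,\la\ol A_i\ra)$: restrict each quantifier to $\la\ol A_{i+1}\ra$ via Lemma~\ref{generation}, and replace each equality $u=v$ by the membership $uv^{-1}\in\la\ol A_i\ra$ via the same lemma.  Each replacement costs $O(\log|G|)$, so $|\widehat\phi_i|=O(\log|H_i|\cdot\log|G|)$; by the telescoping $\sum_i\log|H_i|=\log|G|$ this sums to $O(\log^2|G|)$.  Also express each $g_\ell$ as a specific word $w_\ell$ of length $O(\log|G|)$ in the swift generating set $A_r$.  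Assemble:
\[
\Phi(\ol g) \;\equiv\; \exists\ol x\,\exists\ol T\,\exists\ol A \;\Big[\bigwedge_j r_j(\ol x)=1 \,\land\, \psi(\ol T;\ol A) \,\land\, \bigwedge_i\widehat\phi_i \,\land\, \forall y\,\alpha_k(y;\ol x) \,\land\, \bigwedge_\ell g_\ell=w_\ell(\ol A_r)\Big],
\]
of total length $O(\ell+\log^2|G|)$, matching the claimed bound.

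For correctness, the presentation together with $\forall y\,\alpha_k(y;\ol x)$ forces any model $H$ to be a quotient $G/K$ of $G$ via $\ol x \mapsto \ol x'$.  Because $\ol A$ is deterministically computed from $\ol T$ by the SLP inside $\psi$, the only way the $\widehat\phi_i$'s can all hold is if $\ol T$ is chosen so that $\la\ol A_i\ra$ coincides with the image of $G_i$ under the quotient map, in which case the normality $G_i\lhd G_{i+1}$ descends to $H$ automatically.  Then each $\widehat\phi_i$ forces the quotient $\la\ol A_{i+1}\ra/\la\ol A_i\ra$ to be nontrivially isomorphic to $H_i$, and simplicity of $H_i$ yields $K\cap G_{i+1}\subseteq G_i$; induction on $i$ gives $K=\{1\}$, so $H\cong G$, and the clauses $g_\ell=w_\ell(\ol A_r)$ pin down the tuple.

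\paragraph{Main obstacle.}  The chief technical difficulty is keeping the relativized sentences within the $O(\log^2|G|)$ budget: each equality or quantifier in $\phi_i$ gets multiplied by $O(\log|G|)$ upon relativization, and only the telescoping $\sum_i\log|H_i|=\log|G|$ makes the grand total fit.  A related subtlety is that one must not separately assert normality of the $\la\ol A_i\ra$'s (which would cost an additional $\log|G|$ factor and destroy the budget); instead we rely on the SLP in Lemma~\ref{l:preproc_formula} to force the $\ol A_i$'s, in any quotient model, to coincide with the projections of the original $A_i$'s, so that normality descends automatically from the composition series of $G$.
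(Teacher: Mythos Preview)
Your correctness argument has a genuine gap.  The assertion that ``the only way the $\widehat\phi_i$'s can all hold is if $\ol T$ is chosen so that $\la\ol A_i\ra$ coincides with the image of $G_i$ under the quotient map'' is not justified: $\ol T$ is existentially quantified, and in a model $H$ nothing forces the witness to be the image of your original $T$.  The SLP inside $\psi$ does determine $\ol A$ from $\ol T$, but from a \emph{freely chosen} $\ol T$.  Hence normality of $\la A_i\ra$ in $\la A_{i+1}\ra$ does not ``descend automatically'', and without it the relativization $\widehat\phi_i$ is not speaking about a well-defined quotient group at all; whether it happens to hold is then uncontrolled.

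Your stated reason for omitting normality clauses is also mistaken.  A single formula
\[
\forall u\,\forall v\,\big[\alpha_{|A_i|}(u;A_i)\wedge\alpha_{|A_{i+1}|}(v;A_{i+1})\to\alpha_{|A_i|}(v^{-1}uv;A_i)\big]
\]
expresses $\la A_i\ra\lhd\la A_{i+1}\ra$ in length $O(\log|G|)$, so $O(\log^2|G|)$ over all $i\le r$---well within budget.  Once you add these, your argument can be completed (the chain forces $|\la A_r\ra|=\prod_i|H_i|=|G|$, whence $H\cong G$), but then the invocation of Theorem~\ref{t:simple}, the relativizations $\widehat\phi_i$, and the telescoping bookkeeping are all unnecessary.

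The paper's proof is considerably leaner and does not use Theorem~\ref{t:simple} here at all.  It only asserts: (i) the existence of a \emph{proper} normal chain of length $r$ via swift generating sets $A_i$; (ii) the presentation relations on generators $\ol x$, written as short words over $A$; (iii) that $\ol x$ generates.  Correctness is then immediate from Jordan--H\"older: any proper quotient of $G$ has composition length strictly less than $r$ and so cannot contain a proper normal chain of length $r$.  Your relativization/telescoping machinery is essentially what the paper deploys later for Theorem~\ref{t:main}, where no global presentation is available; for the present proposition the presentation already pins the model down to a quotient of $G$, and one only needs a cheap invariant (composition length) to rule out the proper ones.
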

\begin{proof}
Suppose $G$ has a presentation
 $G=\langle x_1,\ldots, x_k\mid r_1,\ldots , r_m\rangle$
of   length $f(|G|)$.
Fix a composition series
\[1\lhd G_1\lhd\ldots \lhd G_r=G\]
and an ascending sequence of swift generating sets  (see Cor.\ \ref{c:swift_generation})
\[ A_0\subset A_1\subset\ldots\subset A_r=A\]
with $\seq{A_i}=G_i, |A_i|\leq\log|G_i|, 0 \le i\leq r$. Note that $r\leq\log|G|$.

We start with a prenex of existential quantifiers referring to the elements
of $A$ and then express  that for each $i$ the subgroup generated by $A_i$ is a proper normal subgroup of the
subgroup generated by $A_{i+1}$, using the $\alpha_k$ from   Lemma~\ref{generation} for  $k= |A_i|$.
This takes length $O(\log^2 |G|)$.

We next express the $x_1,\ldots, x_k$ as words over the
preprocessing set $A$. This takes a length of $2|A|\cdot k$. We note that % and???
the formula
 \[ \bigwedge_{1 \le i \le m}  r_i = 1 \lland    \fa y \, \alpha_k (y; x_1, \ldots, x_k),\]
 %%arbitrary space
holds in a group $(H,\ol h)$ if and only if $(H,\ol h)$ is a quotient of $(G,\ol g)$ where $\ol h,\ol g$ are the images of $\ol x$ in $H$ and $G$, respectively.
Since a composition series of a proper quotient of $G$ is shorter than $r$, we see that the conjunction of these three formulas describes $(G,\ol g)$ with 
a length of $O((f+\log ^2)|G|)$. For strong compressibility, note that  any tuple of elements from $G$ can be written as a word of length $2|A|$ over $A$.
\end{proof} 
 
%, so the proof of the proposition is complete. ?? 

 It was shown in \cite{Babai:97} that any finite group $G$ without a 
 composition factor of type $^2G_2$ has a presentation of length $O(\log^3|G|)$. Hence we obtain:
   
\begin{corollary}\label{c:solv}
The class of finite solvable groups, and more generally of groups without  a composition factor of type $^2G_2$, is strongly
$\log^3$-compressible.
\end{corollary}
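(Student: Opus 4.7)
The plan is to combine the two ingredients explicitly named just before the statement: the short presentation theorem of Babai et al.\ \cite{Babai:97} and Proposition~\ref{p:short_presentation}. First I would note that a finite solvable group has only cyclic composition factors, so in particular none of them are of type $^2G_2$; thus the solvable case is a consequence of the more general statement, and it suffices to prove the latter.

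For the general case, I would invoke the result of \cite{Babai:97} that any finite group $G$ whose composition factors avoid $^2G_2$ admits a presentation of length $f(|G|) = O(\log^3 |G|)$. Applying Proposition~\ref{p:short_presentation} with this $f$ gives strong $(f+\log^2)$-compressibility, and since $\log^2 |G| = O(\log^3 |G|)$, this is strong $\log^3$-compressibility, as required. The strongness (i.e., the ability to describe a tuple $\ol g$ together with $G$) comes for free from Proposition~\ref{p:short_presentation}, because that proposition already handles arbitrary tuples by expressing them as words of length at most $2|A|$ over the swift preprocessing set~$A$.

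There is no real obstacle: both ingredients are cited from the literature and from the current paper, and the deduction is just bookkeeping of asymptotic bounds. The only small point worth double-checking is that the solvable case is genuinely covered by the general statement, which is immediate from the definition of solvability (all composition factors are cyclic of prime order, hence not isomorphic to any $^2G_2(q)$). Thus the corollary reduces to a one-line combination of Proposition~\ref{p:short_presentation} with the cited presentation-length bound.
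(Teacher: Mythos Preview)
Your proposal is correct and follows exactly the paper's own argument: invoke the $O(\log^3|G|)$ presentation bound from \cite{Babai:97} for groups without a $^2G_2$ composition factor, and feed it into Proposition~\ref{p:short_presentation}. The additional remarks you make (that solvable groups have only cyclic composition factors, and that strongness is inherited from Proposition~\ref{p:short_presentation}) are accurate and slightly more explicit than the paper's one-sentence deduction.
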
 
While this also follows from our main result Thm.\  \ref{t:main} proved below, it is interesting to note that this restricted form can be obtained already as this stage.
\begin{remark}\label{r:strong_simple} Using the  argument  of  Proposition~\ref{p:short_presentation}, one can see
that if a class  of finite groups is $f$-compressible for some function $f:\NN\to\NN$, then this class is strongly $(f+\log^2)$-compressible. However, we do not
know whether the class of finite simple groups is strongly $\log$-compressible.
\end{remark} % Exactly as? No!!

\subsection{Short first-order descriptions via interpretations} \label{ss:interpretations}
It  remains to treat the class of   Ree groups of type $^2G_2$. While the Chevalley groups of type
$G_2$ exist over any field  $\FF$  as the automorphism group of the octonion algebra over $\FF$, the (twisted) groups $^2G_2$
exist only over fields of characteristic $3$ which have an automorphism~$\sigma$ with  square 
the Frobenius automorphism. For  a finite field $\FF_q$,  this happens if and only if $q=3^{2k+1}$. The untwisted group  can be presented as a matrix group over such a field. The twisted group can be seen as the group of fixed points under a certain automorphism of $G_2$ arising from the symmetry in the corresponding Dynkin diagram, which  induces $\sigma$ on the entries of the matrix (see e.g.
\cite[Section 13.4]{Carter:89}). 

 Strong $r$-compressibility was  introduced  after Definition \ref{def:compress}.
\begin{proposition}\label{p:Ree} The class of   Ree groups of type $^2G_2$ is strongly $\log$-compressible via $\Sigma_d$-sentences for some constant $d$.
\end{proposition}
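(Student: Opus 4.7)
The plan is to exploit Ryten's bi-interpretability (\cite[Prop.\ 5.4.6(iii)]{Ryten:07}) between the Ree group ${}^2G_2(q)$ and the difference field $(\mathbb{F}_q, \sigma)$, where $q = 3^{2k+1}$ and $\sigma$ is the $3^{k+1}$-th power of Frobenius. Crucially, the interpreting formulas in both directions, as well as the formula witnessing the definable isomorphism between the double interpretations and the original structures, have complexity (length and quantifier alternation depth) independent of $q$. So the bi-interpretation can be viewed as a fixed translation apparatus that can be applied to any short description of the difference field to yield a short description of the group.

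First I would recall from Proposition~\ref{L:fields}(ii) the $\Sigma_3$-sentence $\psi_{q,\sigma}$ of length $O(\log q)$ describing $(\mathbb{F}_q, \sigma)$. Since $|{}^2G_2(q)|$ is polynomial in $q$ (of degree $7$), we have $\log q = \Theta(\log|{}^2G_2(q)|)$, so any sentence of length $O(\log q)$ is a sentence of length $O(\log|G|)$. I would then assemble a group-theoretic sentence $\phi_q$ that asserts: there exist parameters (from a definable quotient of a definable subset of $G$) carrying the structure of a difference field satisfying $\psi_{q,\sigma}$, the bi-interpretation isomorphism from $G$ to its re-interpretation through this difference field holds, and the standard axioms of the Ryten interpretation are satisfied. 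Since $\psi_{q,\sigma}$ occurs once inside a fixed outer scaffold, and the relativisation of each quantifier through the interpretation increases length by only a constant factor, $|\phi_q| = O(\log q) = O(\log|G|)$. The number of quantifier alternations is bounded by a constant $d$ depending only on Ryten's interpretation formulas and on the $\Sigma_3$-depth of $\psi_{q,\sigma}$.

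For strong compressibility, given a tuple $\ol g \in {}^2G_2(q)^\ell$, the interpretation $G \rightsquigarrow (\mathbb{F}_q,\sigma)$ coordinatises each $g_i$ by a tuple $\ol a_i$ from $\mathbb{F}_q^m$ for a fixed $m$ depending only on Ryten's interpretation. By the argument of Corollary~\ref{c:fieldtuple}, adapted to the difference field by using a multiplicative generator pinned down via Proposition~\ref{L:fields}(iii) and Lemma~\ref{repeated squaring} to express the exponents, the expanded structure $(\mathbb{F}_q, \sigma, \ol a)$ admits a $\Sigma_3$-description of length $O(\log q)$ (with the hidden constant depending on $\ell$). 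Pulling this description back through the bi-interpretation and adding the clause identifying the distinguished group elements $\ol g$ as the images of $\ol a$ under the interpretation yields a sentence describing $({}^2G_2(q), \ol g)$ of length $O(\log|G|)$ and $\Sigma_d$-form.

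The principal obstacle is the careful accounting of quantifier alternations and additive length overhead when composing the bi-interpretation with the description of the difference field, and when passing from field elements to group elements in the strong compressibility step; but since all the interpreting formulas come from Ryten's work and have fixed complexity independent of $q$, the blowup is by a multiplicative constant on length and an additive constant on alternation depth, so the bound $\Sigma_d$ with $d$ absolute holds.
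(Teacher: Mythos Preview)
Your proposal is correct and follows essentially the same route as the paper: invoke Ryten's bi-interpretability (Theorem~\ref{t:ryten}), use the strong $\log$-compressibility of the relevant class of finite difference fields (Proposition~\ref{L:fields} and Corollary~\ref{c:fieldtuple}), and transfer the short description through the fixed interpretation scaffold via Proposition~\ref{t:biinterpretation}. The only cosmetic difference is that you unfold the mechanism of Proposition~\ref{t:biinterpretation} inline rather than appealing to it as a black box, but the argument and the accounting of length and alternation depth are the same.
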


No short presentations are known for these Ree groups. Instead, we use  first-order interpretations between groups and finite difference  fields in order  to derive the proposition from Lemma~\ref{L:fields}.

		 Suppose that $L,K $ are languages in  finite signatures. Interpretations via first-order formulas of $L$-structures in $K$-structures are formally defined, for instance,   in \cite[Section 5.3]{Hodges:93}.   Informally, an $L$-structure $G$ is interpretable in  a $K$-structure $F$ if the elements of $G$ can be represented by tuples in a definable $k$-ary relation $D$ on $F$, in such a way that equality of $G$  becomes an $F$-definable equivalence relation $\approx$ on $D$,  and the other atomic  relations on $F$ are also definable. 
		 
		 A simple example is the   field of fractions of a given intergral domain, which can be interpreted in the  domain.  For an example more relevant to this paper,      fix $n \ge 1$. For any field $\FF$,  the linear group $SL_n(\FF)$  can be  interpreted in $\FF$. A matrix $B$ is represented by a tuple of length $k=n^2$, $D$ is given by the  first-order condition that $\det (B) = 1$, and $\approx$     is equality of tuples. The group operation of $SL_n(\FF)$ is then given by matrix multiplication, and can be 
expressed in a first-order way using  the field operations.

We think of the interpretation of $F$ in $G$
as a decoding function $\Delta$. It decodes $F$ from $G$ using  first-order formulas, so that $F = \Delta(G)$ is  an $L$-structure.   

 %PLEASE LEAVE DECODING!!  
 
 % Dann aber bitte richtig!!

		\begin{definition}\label{def::interpretation}  {\rm  Suppose that $L,K $ are languages in a finite signature, and   that    classes $\+ C \sub M(L), \+ D \sub M(K)$ are given. 	 We say that a function $\Delta$ as above  is a \emph{uniform interpretation   of $\+ C$ in $\+ D$} if for each $G \in \+ C$, there is $F \in \+ D$ such that $G = \Delta(F)$.		} \end{definition} 		
		  Note that if $\Delta$  is a uniform interpretation   of $\+ C$ in $\+ D$, then there is some $k\in\mathbb N$, namely the arity of the relation $D$, such that for $G = \Delta(F)$ we have $|G|\leq |F|^k$. 
		  
		  For example, the class of special linear  groups $SL_2(\FF)$ over
		finite fields $\FF$ is uniformly interpretable in the class of finite fields via the decoding function $\Delta$ given by the formulas 		above.
		
		Suppose $K'$ is  the signature $K$  extended by a finite number of    constant symbols. Let $\+ D'$ be the class of  $K'$-structures, i.e.  $K$-structures giving values to these constant symbols. We say that   a function $\Delta$ based on first-order formulas in $K'$   is a  \emph{uniform interpretation   of $\+ C$ in $\+ D$ with parameters} if $\Delta$     is a uniform interpretation   of $\+ C$ in $\+ D'$.
		
		We will apply %not be applying
		 the following proposition to the class   $\+ C $ of finite   Ree  groups of type $^2G_2(q)$, and  the class $\+ D$   of finite difference fields for which these Ree groups exist. % (namely, $q = 3^{2k+1}$ for some $k$). 			

\begin{proposition}\label{t:biinterpretation}  {\rm Suppose that $L,K $ are languages in a finite signature, and   that    classes $\+ C \sub M(L), \+ D \sub M(K)$ are given. 	 Suppose  furthermore that 

(1) there is a uniform interpretation $\Delta$ without  parameters of $\+ C$ in $\+ D$, 

(2) there is a uniform interpretation $\Gamma$ with parameters  of $\+ D$ in $\+ C$,  
and 

(3) there is an $L$-formula $\eta$  involving   parameters  such that  for each $G \in \+ C$ there is a list of parameters $\ol p$ in $G$ so that $\eta$ defines an isomorphism between $G$ and  $\Delta(\Gamma(G, \ol p))$.  The following hold.

\n (i) If $\+ D$ is   $\log$-compressible,  then  so is $\+ C$. 

\n (ii)  If  $\+ D$ is strongly  $\log$-compressible,  then  so is $\+ C$.} \end{proposition}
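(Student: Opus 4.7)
The plan is to describe any $G\in \+C$ by pulling back a short description of $F := \Gamma(G, \bar p)\in \+D$ along $\Gamma$, then asserting via $\eta$ that $G\cong \Delta(F)$. Let $j$ and $k$ be the arities of $\Gamma$ and $\Delta$, respectively. Since both arities are fixed, $|F|\leq |G|^j$ and $|G|\leq |F|^k$, so $\log|F|=\Theta(\log|G|)$, and an $O(\log|F|)$ description of $F$ in $K$ will yield an $O(\log|G|)$ description of $G$ in $L$.

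Given $G\in\+C$, use hypothesis (3) to pick parameters $\bar p$ in $G$ so that $\eta$ defines an isomorphism $G \to \Delta(\Gamma(G,\bar p))$, and set $F = \Gamma(G,\bar p)$. For part (i), apply $\log$-compressibility of $\+D$ to obtain a $K$-sentence $\tau_F$ of length $O(\log|F|)=O(\log|G|)$ describing $F$ up to isomorphism. The standard syntactic pullback along $\Gamma$ converts $\tau_F$ into an $L$-formula $\tau_F^\Gamma(\bar y)$ whose free variables $\bar y$ represent the parameters used by $\Gamma$: each quantified $K$-variable is replaced by a $j$-tuple of $L$-variables relativised to $\Gamma$'s domain, each atomic $K$-formula is replaced by its $L$-definition under $\Gamma$, and $K$-equality is replaced by the equivalence relation $\approx$ by which $\Gamma$ codes equality. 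Since $\Gamma$ is fixed, $|\tau_F^\Gamma|=O(\log|G|)$. Now define
\[
\psi_G \;\equiv\; \exists \bar y\,\exists \bar z\,\bigl[\,\tau_F^\Gamma(\bar y)\,\land\,\mathrm{Iso}_\eta(\bar y,\bar z)\,\bigr],
\]
where $\mathrm{Iso}_\eta(\bar y,\bar z)$ is the fixed-size $L$-formula expressing that $\eta$ with parameters $\bar y,\bar z$ defines a bijective $L$-homomorphism from the ambient structure onto $\Delta(\Gamma(\cdot,\bar y))$. Then $|\psi_G|=O(\log|G|)$, and $G\models \psi_G$ by construction; if $H\models\psi_G$ is witnessed by $\bar p',\bar z'$, then $\Gamma(H,\bar p')\cong F$ by correctness of $\tau_F^\Gamma$, hence $H\cong \Delta(\Gamma(H,\bar p'))\cong \Delta(F)\cong G$.

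For part (ii), a tuple $\bar g=(g_1,\dots,g_s)\in G^s$ corresponds under $\eta^{-1}$ to a tuple $\bar a=(a_1,\dots,a_s)$ of $\approx$-classes of $k$-tuples in $F$. Strong $\log$-compressibility of $\+D$ yields a $K$-formula $\tau_{F,\bar a}(\bar w)$ of length $O(\log|G|)$ describing $(F,\bar a)$, and its pullback $\tau_{F,\bar a}^\Gamma(\bar y,\bar w)$ has length $O(\log|G|)$. Replacing $\psi_G$ by a formula in free variables $x_1,\dots,x_s$ that existentially quantifies $\bar y,\bar z,\bar w$, asserts $\tau_{F,\bar a}^\Gamma\land \mathrm{Iso}_\eta$, and further asserts via $\eta$ that the $\approx$-class of $w_i$ is sent to $x_i$ for each $i$, gives an $L$-formula of length $O(\log|G|)$ describing $(G,\bar g)$.

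The main technical step is the syntactic pullback $\tau\mapsto\tau^\Gamma$, which is standard (see \cite[Section 5.3]{Hodges:93}) but requires careful bookkeeping of the parameters $\bar y$ and of the equivalence relation $\approx$ that $\Gamma$ uses to code equality. Once that is set up, the verification that $\psi_G$ describes $G$ reduces to hypothesis (3) together with the fact that $\tau_F$ pins down $F$ up to isomorphism.
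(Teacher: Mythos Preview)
Your proposal is correct and follows essentially the same route as the paper's own proof: choose parameters $\bar p$, describe $F=\Gamma(G,\bar p)$ by a short $K$-sentence, pull it back along $\Gamma$, and conjoin the assertion that $\eta$ defines an isomorphism onto $\Delta(\Gamma(\cdot,\bar p))$. Your bookkeeping is in fact a bit tidier than the paper's---you take $F=\Gamma(G,\bar p)$ explicitly and note $\log|F|=\Theta(\log|G|)$ from both interpretations, which makes the length bound transparent; the paper instead starts from an arbitrary $F$ with $G=\Delta(F)$ and appeals to the remark after Definition~\ref{def::interpretation}. One small notational slip: in part (ii) the ``$\approx$-classes of $k$-tuples in $F$'' are classes for $\Delta$'s equivalence relation, not the $\approx$ you introduced earlier for $\Gamma$; but the intended meaning is clear.
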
  
\begin{proof} Let $G\in \+ C$, so that $G = \Delta(F)$ for some $F  \in \+ D$. Let $\phi$ be a sentence of length $O(\log  |F|)$ describing $F$. The sentence $\psi$ expresses the following  about an $L$-structure $H$:

{\it  there are parameters $\ol q$ in $H$  such that $\Gamma(H, \ol q) \models \phi$ and 

\hfill $\eta$ describes an isomorphism $H \cong \Delta(\Gamma(H, \ol q))$.  }
 
We claim that $\psi$ describes $G$.  To see this, note that certainly $G \models  \psi$ via $\ol p$. If $ \wt G$ is an $L$-structure satisfying $\psi$ via   a list of parameters $\ol q$, then 
$\Gamma(\wt G, \ol q) \models \phi$ implies that $\Gamma(\wt G, \ol q) \cong F$, so that $\wt G \cong  \Delta(F) \cong G$.

To see   that $|\psi| = O(\log(|G|))$, recall that  the uniform interpretations   are by definition based on fixed sets of formulas.  Therefore  $|\psi| = O( |\phi|)$. Since  $\log|G|= O( \log |F|)$ by the remark after Definition~\ref{def::interpretation}, we have  $|\psi| = O(\log |G|)$. This shows (i).

To prove (ii) suppose that $G \in \+ C, G = \Delta(F)$ as above. Suppose  $g$ is a tuple in $G$; for notational simplicity assume its length is 1. Then $g$ is given by a $k$-tuple $u$ in $F$ for fixed $k$; we denote this by  $( G, g )= \Delta( F, u)$.  This tuple in  turn is given by a $k \cdot l$-tuple $w$ in $G$ when an appropriate  list $\ol q$ of parameters is fixed; we write $( F, u  ) = \Gamma(G , \ol q, w)$. 

Now by hypothesis on $\+ D$ there is a formula  $\theta (x_1, \ldots, x_k)$ of length $O(\log(|F|)$ describing $(F, u )$. Obtain a formula $\chi(y)$ by adding  to the expression for $\psi$ above   the condition on   $y$ that there is a $k \cdot l$ tuple $w$ of  elements of $H$ such that  $\Gamma(G , \ol q, w)$ satisfies $\theta$, and $\Delta( \Gamma(G , \ol q, w)) = (H, y )$. Then $|\chi| = O (\log |G|)$ and $\chi$ describes $( G, g )$.
\end{proof}
Note that if $\phi$ is a $\Sigma_k$ sentence, then $\psi$ is a $\Sigma_{k+c}$ sentence for  a constant  $c$ depending only on  the interpretations and the formula $\eta$. Thus, if  $\+ D$ is   $\log$-compressible using $\Sigma_k$ sentences,  then   $\+ C$ is $\log$-compressible using $\Sigma_{k+c}$ sentences.

The previous proposition allows us to deal with the class of   Ree groups of type $^2G_2$ using a result of Ryten. Note that  the class of difference fields  

\n $(\FF_{3^{2k+1}},\Fr_3^{k+1})$, $k \in \NN$,  is denoted   $\+C_{(1,2,3)}$ there. The  following is a special case of the more general result of Ryten.

\begin{theorem}\rm{(by \cite{Ryten:07}, Prop.\ 5.4.6(iii))}\label{t:ryten} Let $\+ C$ be the class of finite groups $^2G_2(q)$, $q = 3^{2k+1}$,  and let $\+ D$ be  the class of finite difference fields $(\FF_{3^{2k+1}},\Fr_3^{k+1})$. The hypotheses of Prop.~\ref {t:biinterpretation} can be satisfied via    uniform   interpretations   $\Delta, \Gamma$ and a formula $\eta$ in the language of groups. 
\end{theorem}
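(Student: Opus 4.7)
The plan is to establish the bi-interpretation by exploiting the explicit matrix description of ${}^2G_2(q)$ as fixed points of a twisted Frobenius on $G_2(q)$, then recover the difference field from the abstract group via root subgroup data.

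For the interpretation $\Delta$ of $\+ C$ in $\+ D$, I would use the standard $7$-dimensional representation of $G_2(q)$ as the automorphism group of the split octonion algebra over $\FF_q$, realized inside $GL_7(\FF_q)$ by polynomial equations in the $49$ matrix entries. An element of the Ree group ${}^2G_2(q)$ is then a matrix $M$ such that $\tau(M) = M$, where $\tau$ is the endomorphism combining the order-$2$ diagram symmetry of $G_2$ with the field automorphism $\sigma = \Fr_3^{k+1}$ (note $\sigma^2 = \Fr_3$, which matches the failure of the diagram symmetry to be an automorphism of $G_2$ in characteristic $3$). Since $\sigma$ is a symbol in the difference-field language, the condition ``$M$ is a matrix representing an element of ${}^2G_2(q)$'' is first-order in $(\FF_q, \sigma)$ with $k=49$, and the group operation is polynomial in the entries. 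Equality of group elements corresponds to equality of matrices. This gives a uniform, parameter-free interpretation.

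For the reverse interpretation $\Gamma$ of $\+ D$ in $\+ C$ with parameters, the idea is to recover the field from the root subgroup structure. The Ree group ${}^2G_2(q)$ carries a BN-pair of rank one, with a ``long'' root subgroup $U$ of order $q^3$ isomorphic to a specific nilpotent group whose underlying set can be parametrized by pairs from $\FF_q$ using the $\sigma$-twisted commutation formulas of Ree (this is where the automorphism $\sigma$ gets encoded in the group structure). Choosing parameters $\bar p$ consisting of carefully selected elements of $U$ and a representative of the Weyl group element $w$ (which swaps the two opposite unipotent subgroups), one can define a bijection between a definable subset of $G$ (a coset of a subgroup, or a root subgroup element) and the field $\FF_q$. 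Addition comes from the group operation on $U$ modulo its center, multiplication from conjugation action of a maximal torus, and the automorphism $\sigma$ is visible because the Chevalley-style commutator identities in $U$ mix coordinates via $\sigma$. The formula $\eta$ expressing the isomorphism between $G$ and $\Delta(\Gamma(G,\bar p))$ is then essentially the statement that a given group element equals the matrix built from its coordinates in the recovered field.

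The hard part is the construction of $\Gamma$: one must pin down the field operations, and in particular the distinguished automorphism $\sigma$, uniformly in $q$ using only finitely many first-order formulas in the group language. This hinges on the fact that the Ree commutator formulas in the Sylow $3$-subgroup distinguish it from the analogous group in $G_2(q)$ precisely via $\sigma$, so that the field's multiplicative action on root subgroups (via the torus) together with the additive structure forces the same $\sigma$ to appear. Verifying that the same set of formulas works uniformly for all $k$ — rather than one set per field — is where the genuine model-theoretic input of Ryten's thesis lies; once uniformity is established, parts (i) and (ii) of Proposition~\ref{t:biinterpretation} combine with Proposition~\ref{L:fields}(ii) and Corollary~\ref{c:fieldtuple} to yield Proposition~\ref{p:Ree}, completing Theorem~\ref{t:simple}.
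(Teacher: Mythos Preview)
Your proposal is essentially correct and follows the same overall strategy as the paper: the matrix realization of ${}^2G_2(q)$ inside $G_2(q)$ via the octonion algebra, with $\sigma$ supplying the twist, gives $\Delta$; and recovering the field from the action of a torus on root subgroups (with parameters) gives $\Gamma$. The paper in fact presents $\Gamma$ by working out the analogous and more transparent $SL_2(\FF)$ case and then asserting that the ${}^2G_2$ case is ``essentially the same'', whereas you go directly into the ${}^2G_2$-specific structure (the $q^3$-order Sylow subgroup and the Ree commutator identities that encode $\sigma$); both descriptions point to the same underlying construction in Ryten's thesis.

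One place where you are vaguer than the paper is the isomorphism $\eta$. Saying that $\eta$ is ``the statement that a given group element equals the matrix built from its coordinates'' presupposes that an arbitrary element of $G$ has uniformly definable coordinates in the recovered field. The paper supplies this missing ingredient explicitly: the rank-one Bruhat decomposition ${}^2G_2 = B \cup BsB$ (with $B = U_+T$ and $s$ a Weyl-group representative) gives every element a unique normal form $u_1h$ or $u_1hsu_2$, and it is this normal form that makes $\eta$ a first-order formula. Your sketch would benefit from naming this step.
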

The details of the proof are contained in Ch.\ 5 of \cite{Ryten:07}. Since they
require quite a bit of background on  simple groups of Lie type, we merely  indicate
how to obtain the required formulas. The group $^2G_2(\FF)$ has Lie rank~$1$,
and hence behaves similarly to the group $SL_2(\FF)$, which also has Lie rank~$1$. The formulas required for Prop.~\ref {t:biinterpretation} are
essentially the same in both cases. Since most readers will be more 
familiar with $SL_2(\FF)$, we use this group  rather than $^2G_2(\FF)$ to make the required subgroups more explicit.

The uniform interpretation $\Delta$  of $\+C$ in $\+D$ is essentially
the same as in the case of the interpretation of $SL_2(\FF)$ in $\FF$ described above using
the fact that $G_2(\FF)$ - and hence its subgroup $^2G_2(\FF)$ - has a linear representation as a group of matrices. The groups $G_2(\FF)$ are uniformly definable in $\FF$ (as matrix groups which preserve the octonian algebra on $\FF$). The subgroups $^2G_2(\FF)$ of $G_2(\FF)$ are then uniformly defined in the language of difference fields by expressing that its elements induce linear transformations (of the affine group $G_2(\FF)$) that commute with the field automorphism $\sigma$.

The uniform interpretation with parameters $\Gamma$ of $\+D$ in $\+C$ can be given roughly
as follows: for the group $^2G_2(\FF)$, the \emph{torus} $T$ and the \emph{root subgroups} $U_+,U_-$ of $^2G_2(\FF)$ are uniformly definable subgroups (in the language of groups) using parameters from the group.

In the case of the group $SL_2(\FF)$, the torus is (conjugate to) the group $T$ of diagonal matrices in $SL_2(\FF)$ which can be defined uniformly as the centralizer of a nontrivial element $h$ in $T$. (The same holds for the group  $^2G_2(\FF)$.)

The root group $U_+$ of $SL_2(\FF)$ can be described as the upper triangular
matrices with $1$'s on the diagonal, similarly $U_-$ are the strict lower triangular matrices. The groups $U_+,U_-$ are isomorphic to the additive
group of the field $\FF$ (this is easy to see in the case of $SL_2(\FF)$).
  The torus $T$ acts by conjugation on $U_+,U_-$
as multiplication by the squares in $\FF$. As the characteristic of $\FF$ is $3$, any element of $\FF$ is the 
difference of two squares.  Thus the groups $U_+,U_-$ can be defined uniformly  by picking a nontrivial element $u$ in $U_+, U_-$,  respectively and considering the orbit $\{u^h\colon h\in T\}$ of $u$ under the conjugation by elements from $T$. Writing the group operation on $U_+,U_-$ additively, the set of differences $\{u^h-u^{h'}\colon h,h'\in T\}$ is uniformly definable and defines the root
groups. This also shows that from $U_+\rtimes T$ we definably obtain the field $\FF$. Again, for $^2G_2(\FF)$ this is essentially the same.

It remains to find a formula describing the isomorphism $\eta \colon H \cong \Delta(\Gamma(H, \ol q))$ for a group $H \in \+ C$ and an appropriate list of    parameters including the ones given above. For this we need the fact that by the 
Bruhat decomposition (see \cite{Carter:89}, Ch.\ 8, in particular 8.2.2)
we have  $^2G_2=BNB=B\cup BsB$ where in this case $B=U_+T$, $N$ is the normalizer
of $T$ and $s$ is (the lift of) an involution generating  the \emph{Weyl group} $N/T$ of $^2G_2$. 
Thus
any element of  $^2G_2$ (or in fact of any group of Lie type of Lie rank $1$) can be written 
uniquely either as a product of the form $u_1 h$ or of the form $u_1hsu_2$ where $u_1,u_2\in U_+, h\in T$ and $s$ is a fixed generator of the Weyl group of  $^2G_2$, i.e.\  $s\notin T$ normalizes $T$ and $s^2\in T$. This yields the required isomorphism~$\eta$.\qed

\medskip

\noindent  
\emph{Proof of  Proposition~\ref{p:Ree}}.
By Theorem~\ref{t:ryten} the class $\+C$ of  Ree  groups of  
type $^2G_2$ is uniformly parameter interpretable in the 
class $\+D$ of finite difference fields $(\FF_{3^{2k+1}},\Fr_3^{k+1})$. By Corollary~\ref{c:fieldtuple}, the class 
$\+D$ is strongly $\log$-compressible using $\Sigma_3$ sentences.  By Proposition~\ref{t:biinterpretation} (and the remark after its proof), this  implies that the class $\+C$ is strongly  $\log$-compressible via $\Sigma_d$ sentences for some constant $d$. (We estimate that $d \le 10$.) \qed

\begin{remark} In fact, Ryten proves that for fixed Lie type $\LL$ and rank $n$, the
class of finite simple groups $\LL_n$ is uniformly parameter bi-interpretable with
the corresponding class of finite fields or difference fields.  This  means that in addition  to the properties given
in Prop.~\ref {t:biinterpretation}
there is   a formula $\delta$ in the first-order language for $K$ that defines for each $F\in\+D$ an isomorphism between $F$ and $\Gamma(\Delta(F),\ol p)$.
 Via 
Proposition~\ref{t:biinterpretation} this yields a proof that
each class of finite simple groups is $\log$-compressible. However, since there
are infinitely many such classes, further effort would be needed  in order
to show that there is a single  $O$-constant which works for all classes. We have  circumvented the
problem by using the results of Guralnick et al.\ \cite{Guralnick:08}.
\end{remark}

%\begin{corollary}
%Any tuple of elements of $^2G_2(q)$ can be described -- up to an isomorphism of 
% $^2G_2(q)$ induced by a field automorphism of $\FF_q$ -- by a  sentence in the language of groups of length $O(\log q)$.
%\end{corollary}
%
%\begin{proof}
%The bi-interpretation of $^2G_2(q)$ with the difference field
%identifies the group with a group of matrices over $\FF_q$ and hence any group element is identified with a tuple (of fixed length) of
%elements from $\FF_q$. Now the claim follows from
%Corollary~\ref{c:fieldtuple}.
%\end{proof}
%Was ist mit  den Parameter mit denen man den K�rper in der Gruppe definitiert?

 \begin{remark} By Remark~\ref{binary} and the proofs above,   each finite simple group $G$ actually has  a description of binary  length $O(\log(|G|))$.

 %Note that for a  given $\psi$ of binary length $O(|G|)$, one can decide   whether $G \models \psi$   in deterministic work space  logarithmic in $|G|$.    
 %Thus, given a simple group $G$ we can find  a shortest description within logarithmic space.  
% Suppose a finite group  $G$ is given as an input string of length $O( |G|^2 \log |G|)$ using the multiplication table. It is easy to see that simplicity of $G$ can be decided in polynomial time. We can strengthen this to deterministic logarithmic space. The algorithm tries out all the sentences that are possible descriptions of a finite group and have  length $O(\log |G|)$. If one of them is satisfied the algorithm outputs "yes", otherwise "no".  Quatsch
  \end{remark}

  % nicht trivial fŸr char simple da die ganzen Kommutator Relationen in S^k schon k^2 viel Platz verbrauchen. Bitte deswegen drinlassen. Andre 
% Man kann aber schreiben: wenn <s,t>= S, s_i,t_i,i=1,...k entsprechend,
%fuer alle x  aus {s_i: i=1,...k}  existiert genau ein y aus {t_i: i=1,...k} [x,y]\neq 1
%Ja geht, villain dunk
 
%
Based on the methods above we can somewhat strengthen Theorem~\ref{t:simple}.
 \begin{prop} \label{prop:char simple} The class of characteristically simple finite  groups $G$ is $\log$-compressible. \end{prop}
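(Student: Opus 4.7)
The plan invokes the standard structure theorem: every finite characteristically simple group is $G\cong S^n$ for some finite simple group $S$ and $n\ge 1$, so $\log|G|=n\log|S|$. I split according to whether $S$ is abelian.

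\emph{Abelian case.} Here $S=C_p$ for a prime $p$ and $G\cong(C_p)^n$ is elementary abelian. I describe $G$ as the unique abelian group of exponent $p$ whose minimum generator count equals $n$. Abelianness is a constant-length axiom; the exponent condition $\forall x\,\theta_p(1,x)$ has length $O(\log p)$ by Lemma~\ref{repeated squaring}; and the minimum-generator-count condition is the conjunction of $\exists x_1,\dots,x_n\,\forall y\,\alpha_n(y;\bar x)$ (an $n$-generating set exists) with $\forall y_1,\dots,y_{n-1}\,\exists z\,\neg\alpha_{n-1}(z;\bar y)$ (no $(n{-}1)$-generating set exists), each of length $O(n+\log|G|)$ by Lemma~\ref{generation}. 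Since $n\log p=\log|G|$ forces $n,\log p\le\log|G|$, the total is $O(\log|G|)$.

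\emph{Non-abelian case.} Here the key input is a presentation of $S^n$ of length $O(\log|G|)$. For $S$ not of Ree type, this is obtained by combining Theorem~\ref{thm:Guri} with a profinite/cohomological construction for direct products of simple groups (in the spirit of Guralnick--Kantor--Kassabov--Lubotzky). I convert this into a describing sentence by adapting Lemma~\ref{lem:short_pres}(i): since $S^n$ has proper quotients precisely of the form $S^m$ with $m<n$ (obtained by killing some of the $n$ direct factors), I replace the single clause ``$x_1\ne 1$''—which in the simple case excludes the trivial quotient—with its analogue for each direct factor, asserting that a distinguished generating element of each of the $n$ factors is nontrivial; these $n$ nontriviality clauses suffice to exclude all proper quotients. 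For $S={}^2G_2(q)$, where no short presentation is available, I instead extend Ryten's bi-interpretation (Theorem~\ref{t:ryten}) from the single Ree group/difference field pair to the $n$-fold case, describing $({}^2G_2(q))^n$ via the $n$-fold difference field $(\FF_q,\sigma)^n$; the latter admits a short description by combining Proposition~\ref{L:fields} with an efficient description of the product-ring structure via orthogonal primitive idempotents, and Proposition~\ref{t:biinterpretation} then transfers $\log$-compressibility to the group.

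\emph{Main obstacle.} Expressing the $n$-fold direct product structure within length $O(\log|G|)=O(n\log|S|)$ is the crux. The naive approach—$\binom{n}{2}$ commutator relators between factors for groups, or pairwise orthogonality conditions for idempotents in the ring-theoretic picture—costs $\Theta(n^2)$, which only gives $O(\log^2|G|)$ when $|S|$ is bounded and $n$ is large. The $O(\log|G|)$ bound therefore hinges on the sharper non-naive constructions mentioned above: profinite presentations for direct powers of simple groups in the non-Ree case, and an analogous efficient description of $n$-fold difference fields in the Ree case.
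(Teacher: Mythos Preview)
Your abelian case is correct and essentially equivalent to the paper's.

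In the nonabelian case you correctly isolate the obstacle—the $\Theta(n^2)$ cost of pairwise commutator (or idempotent-orthogonality) conditions—but you do not actually overcome it. You gesture towards ``a profinite/cohomological construction for direct products of simple groups'' and ``an efficient description of the product-ring structure via orthogonal primitive idempotents,'' yet you yourself note that the naive versions of both cost $\Theta(n^2)$, and you supply no alternative. Nothing in the paper (nor in Theorem~\ref{thm:Guri} as stated) gives a presentation of $S^n$ of length $O(n\log|S|)$, and extending Ryten's bi-interpretation to $n$-fold products with an $O(n\log q)$ description of $(\FF_q,\sigma)^n$ is a substantial claim left unsupported. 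This is a genuine gap.

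The paper's solution is far more elementary and treats Ree and non-Ree factors uniformly, without any new presentation or bi-interpretation input. The trick is purely first-order: a conjunction of $\binom{n}{2}$ atomic conditions can be collapsed to length $O(n)$ by quantifying over the finite set $\{x_1,\dots,x_n\}$ via a disjunction. For instance, in the abelian case pairwise commutation becomes
\[
\forall z_1\in\{x_1,\dots,x_n\}\ \forall z_2\in\{x_1,\dots,x_n\}\ [z_1,z_2]=1,
\]
where ``$\forall z\in\{x_1,\dots,x_n\}\,\psi$'' abbreviates $\forall z\,\bigl((\bigvee_i z=x_i)\to\psi\bigr)$, of length $O(n)+|\psi|$. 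For the nonabelian case the paper takes two generators $x_r,y_r$ per copy of $S$, requires that $\{x_1,\dots,x_n,y_1,\dots,y_n\}$ generates (via $\alpha_{2n}$), that $[x_r,y_r]\ne 1$ for each $r$, and that
\[
\forall z\in\{x_1,\dots,x_n\}\ \exists^{\le 1} w\in\{y_1,\dots,y_n\}\ [z,w]\ne 1,
\]
which makes each $U_r=\langle x_r,y_r\rangle$ normal. It then requires, again via the bounded-quantifier device, that $U_r$ has trivial centre and that each quotient $H/C(z,w)$ (for non-commuting $z,w$ from the two lists) satisfies the length-$O(\log|S|)$ description $\phi_S$ of $S$ from Theorem~\ref{t:simple}. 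No presentation of $S^n$ and no separate Ree argument are needed.
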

 \begin{proof} Any  nontrivial characteristically simple finite  group  $G$ is  isomorphic to a direct power $S^k$, $k \ge 1$,  where $S$ is a   simple group (see e.g.\ Wilson~\cite[Lemma 2.8]{Wilson:09}). 
 Firstly  we consider the case that $S$ is abelian, and so cyclic of order $p$. The sentence describing $G$    expresses that there are $x_1, \ldots, x_k$ of order $p$ such that $x_1, \ldots, x_k$ generate the group (using the formulas $\alpha_k$ for $v= |G|$ from Lemma~\ref{generation}); we  can  say within length $O(\log |G|)$ that the $x_r$ commute pairwise  by  expressing with two disjunctions of length   $O(k)$ that  
 \[ \fa z_1 \in \{x_1, \ldots, x_k\} \fa z_2 \in \{x_1, \ldots, x_k\} \, [z_1, z_2 ] = 1. \]
 
 Now suppose that $S$ is nonabelian. It is well-known that $S$ can be generated by just two elements $g,h$.  In the following let $r$ range over $1, \ldots, k$.  The sentence $\phi$ describing $G$    starts with a block of existentially quantified variables $x_1, \ldots, x_k$ and $y_1, \ldots, y_k$; we think of $x_r, y_r$ as the generating set $g,h$ in  the $r$-th  copy of $S$. Firstly, we require,  using the $\alpha_{2k}$ for  the size $|G|$,   that  the set $\{x_1, \ldots, x_k, y_1 , \ldots, y_k\}$ generates the group $H$  under consideration,   that $[x_r, y_r]\neq 1$ for each $r$, and that 
 \[\fa z \in \{x_1, \ldots, x_k\} \ex^{ \le 1}  \, w \in \{y_1, \ldots, y_k\} \, [z,w ] \neq 1.\]
 This ensures that the subgroup  $U_r$ generated by $x_r, y_r$ is normal in the group $H$; hence so is its centraliser $C(x_r,y_r)$.  
  
 Secondly,  let $\phi_S$ be a description  of $S$ with $|\phi_S | = O(\log|S|)$ according to Theorem~\ref{t:simple}. We require that the center of $U_r$ is trivial (this is possible using the formula $\alpha_2$ for  size $|G|$), and that 
 $H /C(z, w) \models \phi_S$ for each $z \in \{x_1, \ldots, x_k\} $ and $w \in \{y_1, \ldots, y_k\}$ such that $[z,w] \neq 1$.  This can be done  within the required length bound since $C(z,w)$ is defined by a formula of fixed length. Since  $H \models \phi$ implies $U_r \cong H/C(U_r)$ for each $r$, the sentence describes $G$.
 \end{proof}

\section{Background on group extensions} \label{s:ext}
In this section we provide the tools needed for obtaining short first-order descriptions of general finite groups in Section~\ref{s: general}. 
To obtain such descriptions, we will use a  composition series of the group in question. Besides  describing the simple quotients,  
 we will also need to describe  the extension of  a group $N$ by a group $H$.
Such an extension can be understood via the second cohomology groups of certain associated modules.
Here  we give a more elementary account of the relevant  part of the theory of group extensions, an account which we can translate into a first-order description of the extension.
We consider a group extension $E$  
containing $N$  as a normal subgroup  such that $E/N\cong H$. (While all of this is in principle well-known,
we include it to keep the paper  self-contained in this regard.)

In contrast to the presentations of  Section 5, we will   use \emph{profinite} presentations for the group $H$ because in this setting it is known that a  small number of relators suffices. So we consider a  presentation
\[H\cong F/R\]
where $F=\widehat F(s_1,\ldots, s_k)$ is the profinite completion of the free group of rank $k$ on generators
$s_1,\ldots s_k$ and $R$ is the closed normal subgroup of $F$ topologically
generated (as a normal subgroup) by $r_1,\ldots, r_m$. For detail see e.g.\  Lubotzky and Segal~\cite[p.\ 47]{Lubotzky.Segal:03}.

We will show that any group extension $E$ of $N$ by $H$ is determined by the 
action of $F$ on $N$,  and an $F$-homomorphism from $R$ into $N$. Such a homomorphism is determined by the generators of $R$ as a normal subgroup, i.e., the relators for the profinite presentation, which is 
why we want the presentation to have as  few relators
as possible.

Let $E$ be an extension of $N$ by $H=\seq{s_1,\ldots, s_k}$.
Let $  s'_1,\ldots   s'_k\in E$ be \emph{lifts} of $s_1,\ldots, s_k\in H$, i.e. $\pi_H(  s'_i)=s_i, i=1,\ldots, k$.
Then the $  s'_i,i=1,\ldots k,$ act on $N$ by conjugation and hence any \emph{word} $w(\ol s)=w(s_1,\ldots, s_k)$ in the profinite
free group $F$ with generators $s_1,\ldots,s_k$ acts on $N$ (as an automorphism of $N$) via the natural
action of $w( s'_1,\ldots,  s'_k)\in E$. By  density this extends to a unique continuous   action of all of $F$ on $N$.
Hence any group extension $E$
of $N$ by a $k$-generated group $H=\langle s_1,\ldots, s_k\rangle$ determines  an action of $F=\widehat F(s_1,\ldots, s_k)$ on $N$, where the $s_i$ are now seen as generators of  $F$, rather than as elements of $H$. 
In order to describe $E$ we will have to express this  action of  $F$ on $N$.

Define $$\phi_E: R\longrightarrow N \ \text{by} \ w(s_1,\ldots,s_k)\mapsto w(  s'_1,\ldots,  s'_k)$$
and extend the definition  to the unique continuous function defined on all of $R$.
Then $\phi_E\in\Hom_{F}(R,N)$.
The next lemma states that the group $E$ is determined -- up to an  isomorphism over $N$ -- by the
action of $F$ on $N$ and the homomorphism $\phi_E$. % No colon please

\begin{lemma}\label{l:phi_E}
Using the previous notation, suppose that  $E^1, E^2$ are groups with a common normal subgroup $N$ and let $s^j_i\in E^j,j=1,2, i=1,\ldots, k$ be lifts of $s_1,\ldots, s_k$, respectively,
such that $(E^j/N,\ol s^j)\cong (H,\ol s), j=1,2$. 

Suppose that the induced $F$-actions agree, i.e. for all $a\in N$ we have
\[a^{s^1_i}=a^{s^2_i}, i=1.\ldots k. \tag{*}\]
Then  $E^1$ and $E^2$ are isomorphic over $N$ via an isomorphism
taking $ s_i^1$ to $s_i^2, i=1,\ldots k,$ if and only if  $\phi_{E^1}=\phi_{E^2}$ .

\end{lemma}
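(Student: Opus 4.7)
The plan is to prove the two directions separately, with the forward direction being essentially immediate and the reverse direction being the substantive content.

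For the forward direction, suppose $\psi \colon E^1 \to E^2$ is an isomorphism fixing $N$ pointwise with $\psi(s^1_i) = s^2_i$. For any $w \in R$ we have $w(s_1,\ldots,s_k) = 1$ in $H$, so $w(s^1_1,\ldots,s^1_k) \in N$ and likewise for $E^2$. Applying $\psi$ gives $\psi(w(s^1_1,\ldots,s^1_k)) = w(s^2_1,\ldots,s^2_k)$. Since the left-hand side equals $\phi_{E^1}(w)$ (and $\psi$ fixes it because it lies in $N$), we conclude $\phi_{E^1}(w) = \phi_{E^2}(w)$. Continuity extends this from the topological generators $r_1,\ldots,r_m$ of $R$ to all of $R$.

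For the reverse direction, assume $\phi_{E^1} = \phi_{E^2}$. The plan is to build $\psi \colon E^1 \to E^2$ directly using coset representatives. Every element of $E^j$ can be written in the form $w(s^j_1,\ldots,s^j_k)\cdot n$ for some $w \in F$ and $n \in N$, since the lifts $\{w(\ol{s^j}) : w \in F\}$ exhaust the coset representatives of $N$ in $E^j$. I would define
\[
\psi\bigl(w(s^1_1,\ldots,s^1_k)\cdot n\bigr) \;=\; w(s^2_1,\ldots,s^2_k)\cdot n.
\]
Well-definedness is where the hypothesis $\phi_{E^1}=\phi_{E^2}$ enters: if $w(\ol{s^1})n = w'(\ol{s^1})n'$, then modulo $N$ we get $w^{-1}w' \in R$, and rearranging gives $\phi_{E^1}(w^{-1}w') = n(n')^{-1}$. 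Since $\phi_{E^2}(w^{-1}w')$ has the same value, the same relation holds in $E^2$, so $w(\ol{s^2})n = w'(\ol{s^2})n'$.

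It remains to verify that $\psi$ is a homomorphism and bijective. Both use condition $(*)$: when computing $w(\ol{s^j})n \cdot w'(\ol{s^j})n'$, one conjugates $n$ past $w'(\ol{s^j})$ using the $F$-action on $N$, and this action is the same in $E^1$ and $E^2$ by hypothesis. The resulting formula therefore matches on both sides, so $\psi$ respects multiplication. Bijectivity follows by constructing the inverse map symmetrically. By construction $\psi$ fixes $N$ and sends $s^1_i \mapsto s^2_i$.

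The main obstacle is purely bookkeeping: keeping track of the semidirect-product conventions (whether we act on the left or right, and which side the $N$-factor sits on in $w(\ol{s^j})n$) and confirming that continuity of $\phi_{E^j}$ together with $N$ being finite means the finitely many topological relators $r_1,\ldots,r_m$ determine $\phi_{E^j}$, so that the agreement $\phi_{E^1}=\phi_{E^2}$ that one uses in applications need only be checked on these relators. Aside from this, the argument is a direct unravelling of the definitions.
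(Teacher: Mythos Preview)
Your proof is correct and follows essentially the same approach as the paper's: both directions are argued identically, the only cosmetic difference being that the paper writes elements as $a\,w(\ol s^j)$ with $a\in N$ on the left rather than $w(\ol s^j)\cdot n$ on the right, and the paper checks bijectivity via injectivity-plus-surjectivity rather than by exhibiting the inverse. Your continuity remark in the forward direction is appropriate since $\phi_{E^j}$ is defined on the full profinite closure of $R$, though note that your argument already applies to every word $w$ in $R$, not just the normal generators $r_1,\ldots,r_m$.
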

\begin{proof}
First suppose that $\phi_{E^1}=\phi_{E^2}$. Define for $a \in N$
\[f: E^1\longrightarrow E^2,\ aw(\ol s^1) \mapsto \ aw(\ol s^2).\]
Note that 
\[aw(\ol s^1) =a'w'(\ol s^1) \LR w(\ol s^1)(w'(\ol s^1))^{-1}\in N \LR w(\ol s)(w'(\ol s))^{-1}\in R.\]
 Since $\phi_{E^1}=\phi_{E^2}$, we
see that indeed $f$ is well-defined. Exchanging the roles of $E^1$ and $E^2$ shows that $f$ is injective.

Note that $f$ is a homomorphism because the $F$-actions on $N$ agree:
let $a_0, a_1 \in N$, and let $w_0, w_1$ be group words in  variables $\ol s=s_1, \ldots, s_k$. Then
\begin{eqnarray*} f(a_0w_0(\ol s^1) a_1 w_1(\ol s^1) )& = & f(a_0 a_1^{w_0^{-1}(\ol s^1)} w_0(\ol s^1) w_1(\ol s^1))  \\
&= & a_0  a_1^{w_0^{-1}(\ol s^2)} w_0(\ol s^2) w_1(\ol s^2)\mbox{\hspace{1.5cm} by }(*)  \\
&= & a_0 w_0(\ol s^2) a_1 w_1(\ol s^2) \\
  & = & f(a_0w_0(\ol s^1) ) f  (a_1 w_1(\ol s^1) )\end{eqnarray*}

Since $E^j$ is generated by $N$ and $\ol s^j,j=1,2$, this now implies that $f$ is surjective
and hence an isomorphism fixing $N$ pointwise.

For the converse implication, suppose that $g:E^1\longrightarrow E^2$ is an isomorphism fixing $N$ pointwise and taking $s_i^1$ to $s_i^2, i=1,\ldots k,$. For any word $w$ with $w(\ol s)\in R$ we   have \bc $g(w(\ol s^1 ))=w(\ol s^1 )=\phi_{E^1}(w(\ol s))$. \ec 

Also
\bc $g(w(\ol s^1))=w(\ol s^2 )=\phi_{E^2}(w(\ol s))$, \ec proving the lemma.
\end{proof}
A close inspection of the proof of Lemma \ref{l:phi_E} yields the following
variant, which will be used in   Section~\ref{s: general}  for the first-order description of group
  extensions.

\begin{lemma}\label{l:3m}
Suppose that in the situation of Lemma \ref{l:phi_E} every element of $H$ has length at most $m$ with respect to $\ol s $.
Then $E^1$ and $E^2$ are 
isomorphic over $N$ provided that  $\phi_{E^1}^{3m}=\phi_{E^2}^{3m}$, where
$\phi_{E^j}^{3m}, j=1,2,$ denotes the restriction of $\phi_{E^j}$ to the elements of $R$ of word length 
at most $3m$ over $\ol s$.
\end{lemma}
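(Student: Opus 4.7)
The plan is to adapt the proof of Lemma~\ref{l:phi_E} by replacing the ``arbitrary representative'' approach with a fixed bounded normal form, which restricts the values of $\phi_{E^j}$ that actually enter the argument. First I would fix, for each $h\in H$, a word $w_h$ in the generators $\ol s$ of length at most $m$ representing $h$; such a choice exists by the hypothesis on $H$, and I would normalize it by taking $w_1$ to be the empty word and $w_{s_i}=s_i$. Every element of $E^j$ then has a unique presentation $a\cdot w_h(\ol s^j)$ with $a\in N$ and $h\in H$, so one can define a bijection
\[ f\colon E^1\longrightarrow E^2,\qquad a\cdot w_h(\ol s^1)\longmapsto a\cdot w_h(\ol s^2). \]
By construction, $f$ fixes $N$ pointwise and sends $s_i^1$ to $s_i^2$.

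The main step is to verify that $f$ is a homomorphism. Given two normal-form elements $a_0 w_{h_0}(\ol s^j)$ and $a_1 w_{h_1}(\ol s^j)$, I would first pull $a_1$ past $w_{h_0}(\ol s^j)$ using the conjugation action, and then re-normalize the product $w_{h_0}(\ol s^j)w_{h_1}(\ol s^j)$ by introducing $r:=w_{h_0}w_{h_1}w_{h_0h_1}^{-1}\in R$, to obtain in $E^j$ the identity
\[ a_0 w_{h_0}(\ol s^j)\cdot a_1 w_{h_1}(\ol s^j)\;=\;\bigl(a_0\cdot a_1^{w_{h_0}^{-1}(\ol s^j)}\cdot\phi_{E^j}(r)\bigr)\cdot w_{h_0 h_1}(\ol s^j). \]
The crucial observation is that $|r|\le 3m$, so the hypothesis $\phi_{E^1}^{3m}=\phi_{E^2}^{3m}$ gives $\phi_{E^1}(r)=\phi_{E^2}(r)$; combined with the agreement of $F$-actions on $N$ from $(*)$, which yields $a_1^{w_{h_0}^{-1}(\ol s^1)}=a_1^{w_{h_0}^{-1}(\ol s^2)}$, one sees that applying $f$ to the $E^1$-product produces exactly the $E^2$-product of the images, so $f$ is a homomorphism and therefore the required isomorphism over $N$.

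The main obstacle, modest as it is, lies in identifying the right cutoff: one factor of $m$ bounds each $w_{h_i}$, a second is contributed by the concatenation $w_{h_0}w_{h_1}$, and the third comes from the inverse $w_{h_0 h_1}^{-1}$ needed to restore normal form, giving the bound $3m$ on $|r|$. Once this bookkeeping is pinned down, the rest of the argument is a direct transcription of the forward direction of Lemma~\ref{l:phi_E}, so no further difficulty is expected.
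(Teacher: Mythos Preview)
Your proposal is correct and follows essentially the same approach as the paper: define $f$ via bounded-length representatives and observe that the homomorphism check only needs the value of $\phi_{E^j}$ on the word $w_{h_0}w_{h_1}w_{h_0h_1}^{-1}\in R$ of length $\le 3m$. Your choice of a \emph{fixed} normal form $w_h$ for each $h\in H$ is a mild streamlining over the paper, which allows any word of length $\le m$ and therefore has to check well-definedness of $f$ separately (using only words of length $\le 2m$); otherwise the two arguments are the same.
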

\begin{proof}
Define $f: E^1\longrightarrow E^2$ by
\[a w( \ol s^1)\mapsto a w( \ol s^2),\]
where $ a\in N$ and $w \in F(\ol s )$  is a group word such that $ |w|\leq m$. 
By assumption, $f$ is defined on all of $E^1$. We verify  as in the proof of Lemma~\ref{l:phi_E}   that $f$ is well-defined  and injective, noting that only words of length $\le 2m$ are relevant now.

To check that $f $ is a homomorphism, let $a_0, a_1 \in N$, and let $w_0, w_1$ be group words in  variables $s_1, \ldots, s_k$ of length at most $m$. 
By assumption
there are $a\in N$ and a word $w_2$ of length
at most $m$ such that

\[  w_0(\ol s^1 )w_1(\ol s^1)=aw_2(\ol s^1 ).\]

Since $\phi_{E^1}^{3m}=\phi_{E^2}^{3m}$ we have

\[ w_0(\ol s^2)w_1(\ol s^2 )=aw_2(\ol s^2 ).\]
Hence   as in the proof of Lemma~\ref{l:phi_E} we have
\begin{eqnarray*} f(a_0w_0(\ol s^1) a_1 w_1(\ol s^1) )& = & f(a_0 a_1^{w_0^{-1}(\ol s^1 )} w_0(\ol s^1) w_1(\ol s^1))  \\
& = &f(a_0 a_1^{w_0^{-1}(\ol s^1 )}a w_2(\ol s^1)) \\
&= & a_0  a_1^{w_0^{-1}(\ol s^2)} a w_2(\ol s^2) \mbox{\hspace{1.5cm}  by (*)  }\\
&= & a_0 w_0(\ol s^2) a_1 w_1(\ol s^2 )  \\
  & = & f(a_0w_0(\ol s^1) ) f  (a_1 w_1(\ol s^1 ) )\end{eqnarray*}
Since $E^j,j=1,2$, is generated by $N$ and $\ol s^j$, this now implies that $f$ is surjective
and hence an isomorphism fixing $N$ pointwise.
\end{proof}

Recall that a group action is called \emph{regular}
if it is transitive and point stabilizers are trivial.

\begin{lemma}\label{l:homology_group}
Let $Z=Z(N)$.
The group $\Hom_F(R,Z)$ acts regularly on the set \[X=\{\phi_E\colon E\mbox{ is extension of } N \mbox{ by } H \mbox{
with prescribed }  F\mbox{-action on } N\}\]
via $\phi_E^\psi(w(\ol s ))=\phi_E(w(\ol s ))\psi(w(\ol s ))$ for $\psi\in Hom_F(R,Z)$ and $\phi_E\in X$
\end{lemma}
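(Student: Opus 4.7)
The plan is to verify the three defining properties of a regular action in turn: that the formula for $\phi_E^\psi$ really yields an element of $X$ (well-definedness), transitivity on $X$, and triviality of point stabilizers (freeness).

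Triviality of stabilizers is the quickest: if $\phi_E^\psi = \phi_E$, then $\psi$ vanishes on every element of the form $w(\ol s) \in R$, and since such elements are topologically dense in $R$, continuity of $\psi$ forces $\psi = 1$.

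For transitivity, given $\phi_{E^1}, \phi_{E^2} \in X$ (so that $E^1$ and $E^2$ induce the same prescribed $F$-action on $N$), I would define
\[
\psi(w) := \phi_{E^1}(w)^{-1}\phi_{E^2}(w) \quad \text{for } w \in R,
\]
and verify that $\psi \in \Hom_F(R,Z)$. The values lie in $Z = Z(N)$ because, for $w \in R$, the element $\phi_{E^j}(w)$ lies in $N$ and induces on $N$ (by conjugation inside $E^j$) the prescribed $F$-action of $w$; since the two $F$-actions agree, $\phi_{E^1}(w)^{-1}\phi_{E^2}(w)$ must centralise $N$. A short calculation using centrality of $\psi$-values then gives $\psi(ww') = \psi(w)\psi(w')$, and $F$-equivariance is inherited from that of the $\phi_{E^j}$. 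By construction $\phi_{E^1}^\psi = \phi_{E^2}$.

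Well-definedness is the most delicate step. Given $\phi_E \in X$ and $\psi \in \Hom_F(R,Z)$, I would exhibit an extension $E_\psi$ realising $\phi_E \cdot \psi$ by taking
\[
E_\psi := (N \rtimes F)/K_\psi, \qquad K_\psi := \{\bigl((\phi_E(r)\psi(r))^{-1},\, r\bigr) : r \in R\},
\]
where the semidirect product uses the prescribed $F$-action on $N$. The main point is to check that $K_\psi$ is a closed normal subgroup of $N \rtimes F$. This combines the $F$-equivariance of the pointwise product $\phi_E \cdot \psi$ with the key fact that for $r \in R$ the $F$-action of $r$ on $N$ coincides with conjugation by $\phi_E(r)$, and hence equally with conjugation by $\phi_E(r)\psi(r)$ since $\psi(r) \in Z$. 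Once $K_\psi$ is seen to be normal, $E_\psi$ is visibly an extension of $N$ by $H = F/R$ realising the prescribed $F$-action, and the lifts of $\ol s$ coming from the canonical copy of $F$ in $N \rtimes F$ yield $\phi_{E_\psi} = \phi_E \cdot \psi$ by construction.

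The main obstacle is this normality verification: one must juggle $F$-equivariance, the precise correspondence between the $F$-action of elements of $R$ and inner automorphisms of $N$ coming from $\phi_E$, and the centrality of $\psi$-values, keeping clear the distinct roles of $F$, $R$, and $N$ throughout.
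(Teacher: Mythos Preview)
Your proposal is correct and, for transitivity, takes essentially the same route as the paper: both observe that $\phi_{E^1}(w)$ and $\phi_{E^2}(w)$ induce the same inner automorphism of $N$ (namely the prescribed $F$-action of $w$), so their ratio lies in $Z$, and continuity/density extends this to all of $R$. You also supply the freeness argument, which the paper leaves implicit.

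The genuine difference is in the well-definedness step. The paper constructs the twisted extension $E^1$ concretely by choosing a transversal $T$ for $F/R$, writing each $w(\ol s)\in F$ uniquely as $v(\ol s)r(\ol s)$ with $v\in T$, $r\in R$, and declaring the elements of $E^1$ to be the expressions $n\,v(\ol s^0)\,\phi_E(r(\ol s))\psi(r(\ol s))$ with the multiplication inherited from $E$. Your construction instead realises $E_\psi$ as a quotient $(N\rtimes F)/K_\psi$, where $K_\psi$ is the graph of $r\mapsto (\phi_E(r)\psi(r))^{-1}$. Your approach is cleaner in that the group axioms come for free once $K_\psi$ is shown to be normal, and that normality check factors neatly into the two ingredients you name ($F$-equivariance of $\phi_E\cdot\psi$, and the fact that $R$ acts on $N$ via inner automorphisms by $\phi_E(r)$). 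The paper's transversal construction is more hands-on and closer in spirit to the explicit cocycle picture, but its ``induced multiplication'' clause hides exactly the same verification. Either route is fine; yours is arguably the more transparent of the two.
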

\begin{proof}
To see that the action is transitive just notice that for extensions $E_1, E_2$ of $N$ by $H$ with the given $F$-action on $N$, and lifts $s_i^j, j=1,2, i=1,\ldots k$ as before
we have  for all $n\in N$
\[n^{\phi_{E_1}(w(\ol s))}=n^{w(\ol s^1)}=n^{w(\ol s)}=n^{w(\ol s^2)}=n^{\phi_{E_2}(w(\ol s))}\]
and hence $\phi_{E_1}(w(\ol s ))(\phi_{E_2}(w(\ol s )))^{-1}\in Z$. By
continuity,  $\phi_{E_1}$ and $\phi_{E_2}$ differ
by an element in $\Hom_F(R,Z)$.

Let $\psi \in Hom_F(R,Z)$.  To see that $\phi_E^\psi=\phi_{E^1}$ for some extension $E^1$ with prescribed $F$-action on $N$,
define $E^1$ by choosing a transversal $T$ for $F/R$ so that any element $w(\ol s )\in F$ can be written uniquely
as \bc  $w(\ol s)=v(\ol s )r(\ol s )$ \ec where $v(\ol s )\in T,r(\ol s )\in R$.

Let $ s^0_i,i=1,\ldots k$ be the lifts of $s_i$ to $E$.
We now define an extension $E^1$ with lifts $s^1_i,i=1,\ldots k$, by letting the elements of $E^1$ be
\[ nw(\ol s^1)=nv(\ol s^0)\phi_E(r(\ol s ))\psi(r(\ol s ))\]
with the induced multiplication. Then $E^1$ is an extension with prescribed $F=\widehat F(\ol s )$ action and $\phi_{E^1}=\phi_E^\psi$.
\end{proof}

The  \emph{rank} of an abelian group  $A$, denoted $\rank A$, is the minimal size of a   set of generators, or, in other words, the least $k$ such that there is an onto map $\ZZ^k \to A$. Clearly $B \le A$ implies $\rank B \le \rank A$. 
Letting $\lambda n$ denote the number of prime factors of $n$ with multiplicity,
we have $\rank A\leq \lambda |A|\leq\log|A|$.

\begin{remark}\label{rem:dimension}
Lemma \ref{l:homology_group} implies that the number of extensions of $N$ by $H$
is at most $|Z|^r$ where $Z=Z(N)$ and $r$ is the minimum number  of generators
of $R$ as a closed normal subgroup of $F$.
The \emph{rank} of $\Hom_F(R,Z)$   is at most $r\cdot \lambda|Z|$ since each $\phi \in \Hom_F(R,Z)$ is determined by its values on the $r$  generators of $R$.
\end{remark}

\begin{corollary}\label{c:trivialrad}
 If $Z(N)=1$, then an extension $E$ of $N$ by $H$ is determined up to isomorphism
over $N$ by the $F$-action on $N$. 
\end{corollary}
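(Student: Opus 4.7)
The plan is to deduce this directly from Lemmas \ref{l:phi_E} and \ref{l:homology_group}. Fix an $F$-action on $N$, and let
\[X=\{\phi_E\colon E \text{ is an extension of } N \text{ by } H \text{ with the prescribed $F$-action on } N\}.\]
By Lemma \ref{l:homology_group}, the group $\Hom_F(R,Z(N))$ acts regularly on $X$. Under the hypothesis $Z(N)=1$, this group is trivial, so a regular action forces $|X|\le 1$. Hence any two extensions $E^1,E^2$ of $N$ by $H$ inducing the same $F$-action on $N$ satisfy $\phi_{E^1}=\phi_{E^2}$.

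Now I would invoke Lemma \ref{l:phi_E} with the same lifts $s^j_i\in E^j$ of the generators $s_i$: the hypothesis $(\ast)$ that the $F$-actions agree is built in, and the equality $\phi_{E^1}=\phi_{E^2}$ just derived is exactly the remaining hypothesis needed to conclude that $E^1$ and $E^2$ are isomorphic over $N$ via an isomorphism sending $s^1_i$ to $s^2_i$. This gives the required uniqueness up to isomorphism over $N$.

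There is essentially no obstacle here — the corollary is an immediate combination of the two preceding lemmas, using only the observation that a regular action of the trivial group on a set forces that set to have at most one element. The only minor point worth flagging is that $X$ is nonempty by the assumption that some extension $E$ exists, so we really do have $|X|=1$ rather than $|X|=0$.
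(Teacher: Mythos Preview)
Your proof is correct and is exactly the argument the paper leaves implicit: the corollary is stated without proof, as it follows immediately from Lemma~\ref{l:homology_group} (which collapses $X$ to a single element when $Z(N)=1$) together with Lemma~\ref{l:phi_E}. One small wording issue: the lifts $s^1_i\in E^1$ and $s^2_i\in E^2$ live in different groups, so they are not literally ``the same'' lifts; what you mean (and use) is that they are lifts inducing the same $F$-action, which is precisely hypothesis~$(\ast)$.
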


Lemma~\ref{l:3m} states that the restriction $\phi_E^{3m}$ of $\phi_E$ to words of length at most $3m$ is sufficient for describing an extension $E$.
To give a short description of $\phi_E^{3m}$, we heavily rely on the following  lemma originally suggested by Alex Lubotzky.

\begin{lemma}\label{l:linalg}
Let $A$ be a finite abelian group, $X$ a set and let $V\leq A^X$ be a subgroup of  rank $d$. There exists a set $Y \sub  X$ of size at most $ d\cdot\lambda(|A|)$ such that for all $g \in V$,   $g\upharpoonright Y = 0$   implies  $g= 0$. 

\end{lemma}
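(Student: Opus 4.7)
The plan is to construct $Y$ greedily, peeling off one coordinate at a time to kill nonzero elements of $V$, and to bound the number of iterations using $\lambda$.

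Set $Y_0 = \emptyset$ and $V_0 = V$. Suppose $Y_i$ and $V_i = \{g \in V : g\upharpoonright_{Y_i} = 0\}$ have been defined. If $V_i \neq 0$, choose any nonzero $g \in V_i$; since $g \in A^X$ and $g \neq 0$, there is some $x_i \in X$ with $g(x_i) \neq 0$. Set $Y_{i+1} = Y_i \cup \{x_i\}$ and $V_{i+1} = \{g \in V_i : g(x_i) = 0\}$. Stop when $V_i = 0$; the resulting set $Y$ has the required detection property by construction.

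The main task is to bound $|Y|$. The evaluation map $V_i \to A$, $g \mapsto g(x_i)$, has kernel $V_{i+1}$ and nontrivial image (by the choice of $x_i$), so $V_i/V_{i+1}$ is a nontrivial subgroup of $A$. In particular $|V_i/V_{i+1}|$ divides $|A|$ and is at least $2$, so $\lambda(|V_i/V_{i+1}|) \ge 1$. Multiplying telescopically, $|V| = \prod_{i} |V_i/V_{i+1}|$, hence the number of steps is at most $\lambda(|V|)$.

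It remains to estimate $\lambda(|V|)$. Here we use that $V$ has rank $d$ as an abelian group: there is a surjection $\ZZ^d \twoheadrightarrow V$. Since $V \le A^X$, every element of $V$ has order dividing $\exp(A)$, so this surjection factors through $(\ZZ/\exp(A)\ZZ)^d$, yielding
\begin{equation*}
|V| \;\le\; \exp(A)^d \;\le\; |A|^d.
\end{equation*}
Consequently $\lambda(|V|) \le d \cdot \lambda(|A|)$, and therefore $|Y| \le d \cdot \lambda(|A|)$, as required. The only mildly delicate point is this last rank-to-size passage; everything else is a direct iterative construction.
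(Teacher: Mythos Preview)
Your argument is correct and takes a genuinely different route from the paper's. One small point: the final inference $\lambda(|V|)\le d\,\lambda(|A|)$ does not follow from the inequality $|V|\le|A|^d$ alone (abstractly, $|V|=32$, $|A|=6$, $d=2$ would be a counterexample). What makes it work is that your surjection from $(\ZZ/\exp(A)\,\ZZ)^d$ onto $V$ shows that $|V|$ \emph{divides} $\exp(A)^d$, whence $\lambda(|V|)\le d\,\lambda(\exp(A))\le d\,\lambda(|A|)$ since $\exp(A)\mid|A|$. Replace ``$\le$'' by ``divides'' in that line and the proof is complete.

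The paper instead decomposes $A$ into its $p$-primary components $A_p$ and proves a sharper auxiliary lemma for each prime separately: when $A$ is a $p$-group, a set $Y$ of size at most $d$ already suffices, found by splitting off one direct summand of $V$ per chosen coordinate rather than merely one prime factor of $|V|$. Taking the union over the distinct primes dividing $|A|$ then gives the stated bound. Your greedy argument is more elementary and avoids both the primary decomposition and the auxiliary $p$-group lemma; the paper's approach, on the other hand, actually yields the tighter bound of $d$ times the number of \emph{distinct} primes dividing $|A|$, though only the weaker $d\,\lambda(|A|)$ is recorded there.
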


\begin{proof}
Decompose $A$ into its $p$-primary components $A=\bigoplus_pA_p$. Since the number of different primes dividing the order of $A$ is at most $\lambda(|A|)$, the lemma follows from applying Lemma~\ref{l:linalgp} below to each of the $A_p$
separately.
\end{proof}
\begin{lemma}\label{l:linalgp}
Let $A$ be a finite abelian \emph{$p$-group}, $X$ a set and let $V\leq A^X$ be a subgroup of  rank $d$. There exists a set $Y \sub  X$ of size at most $ d$ such that for all $g \in V$,   $g\upharpoonright Y = 0$   implies  $g= 0$. 
\end{lemma}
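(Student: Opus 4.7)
The plan is to reduce the problem to linear algebra over $\mathbb{F}_p$ by passing to the $p$-torsion subgroup.

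First I would observe that for any finite abelian $p$-group $B$, the rank of $B$ equals $\dim_{\mathbb{F}_p} B[p]$, where $B[p] = \{b \in B : pb = 0\}$; this is immediate from the structure theorem. Thus $V[p]$ is a $d$-dimensional $\mathbb{F}_p$-subspace of $A[p]^X$, which is an $\mathbb{F}_p$-vector space.

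Next I would show that it suffices to find $Y$ with $|Y| \leq d$ such that the projection $\pi_Y \colon V \to A^Y$ is injective on $V[p]$. Indeed, given any nonzero $g \in V$, let $p^k$ be its order with $k \geq 1$; then $p^{k-1} g$ is a nonzero element of $V[p]$, and $\pi_Y(g) = 0$ would force $\pi_Y(p^{k-1} g) = p^{k-1}\pi_Y(g) = 0$, contradicting injectivity on $V[p]$.

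The problem now reduces to pure linear algebra: given a $d$-dimensional $\mathbb{F}_p$-subspace $W = V[p]$ of $U^X$ with $U = A[p]$, find $Y \subseteq X$ of size at most $d$ such that the projection $W \to U^Y$ is injective. I would build $Y$ greedily. Set $W_0 = W$, and having chosen $y_1, \ldots, y_i \in X$, define $W_i = \{g \in W : g(y_1) = \cdots = g(y_i) = 0\}$. If $W_i \neq 0$, pick any $y_{i+1} \in X$ and $g \in W_i$ with $g(y_{i+1}) \neq 0$; the evaluation map $W_i \to U$ at $y_{i+1}$ is a nonzero $\mathbb{F}_p$-linear map, so its kernel $W_{i+1}$ satisfies $\dim W_{i+1} < \dim W_i$. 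After at most $d$ steps the process terminates with $W_k = 0$, and $Y = \{y_1, \ldots, y_k\}$ witnesses the lemma.

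The argument is essentially induction on $\mathbb{F}_p$-dimension; I do not foresee a serious obstacle. The only conceptual point is the reduction to a vector space problem via the $p$-torsion, which crucially uses the $p$-group hypothesis (allowing one to lift nonzero elements of $V$ to nonzero elements of $V[p]$ and identifying rank with $\mathbb{F}_p$-dimension of $V[p]$).
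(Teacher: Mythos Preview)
Your proof is correct, and it takes a genuinely different route from the paper's. The paper first reduces to the case where $A$ itself is cyclic: writing $A$ as a product of $k$ cyclic $p$-groups, it embeds $A^X$ into $C_q^{X \times \{1,\ldots,k\}}$ where $q$ is the exponent of $A$, thereby replacing $X$ by a larger index set and $A$ by $C_q$. With $A$ cyclic, it then argues directly on $V$: pick $x_1 \in X$ and $v_1 \in V$ so that $g_{x_1}(v_1)$ has maximal order in $A$, show that $V = \ZZ v_1 \oplus \ker(g_{x_1})$, and induct on the rank of $\ker(g_{x_1})$. Your approach instead leaves $A$ alone and passes to the $p$-torsion $V[p]$, reducing immediately to a linear-algebra problem over $\FF_p$; the key observation that injectivity of $\pi_Y$ on $V[p]$ already forces injectivity on all of $V$ (via multiplying a putative nonzero kernel element up into $V[p]$) is what makes this reduction work. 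Your argument is arguably cleaner and more conceptual, since it isolates exactly where the $p$-group hypothesis is used and avoids the auxiliary enlargement of $X$; the paper's argument, on the other hand, gives slightly more information, namely an explicit direct-sum decomposition $V = \bigoplus_i \ZZ v_i$ adapted to the chosen coordinates.
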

\begin{proof}
Since $A$ is a direct product  of $k$ cyclic $p$-groups for some $k$, we may consider $V\leq A^X\leq (C^k_q)^X\cong (C_q)^{k|X|}$ where $q$  is the exponent of   
$A$. 
Then, replacing  each element $x$ of $X$ by   $k$ new elements $\la x,1\ra, \ldots, \la x,k\ra$, we may assume $A=C_q$ and $V\leq C_q^{X'}$ where  $X' = X \times \{1, \ldots, k\}$. Once we have  found a  subset $Y'$ of $X'$ of size at most $d$ with the required property,  we obtain $Y\subseteq X,|Y|\leq d,$ by replacing each  element $\la y,i \ra \in Y'$ by  $y$. If $g \in V$ and $g(y)=0$ then $g(\la y,i\ra)=0$ for each $i$  when $g$ is viewed as a function on $X'$ with values in~$C_q$.

Without loss of generality we may thus assume that $A =C_q$. For $x\in X$ let $g_x\colon V\to A$ denote the  coordinate function mapping $p \in V$ to $p(x)$. There is nothing to show if
$d=0$, so suppose $d>0$.

Let $x_1\in X,  v_1\in V$ such that $g_{x_1}(v_1)$ has maximal order. Then $g_{x_1}(V)\leq \ZZ g_{x_1}(v_1)$.  
We claim that  $V$ decomposes
as $V= \ZZ v_1\oplus \ker(g_{x_1})$. First note that clearly $\ZZ v_1 \cap \ker (g_{x_1}) = 0$. Next, given arbitrary  $w \in V$, choose $r \in \ZZ$ so that $g_{x_1}(w)= r \cdot g_{x_1}(v)$. Then $g_{x_1}(w-r\cdot v) =0$, so that $ w \in  \ZZ v_1+\ker(g_{x_1})$.

% para

  Clearly  $\rank\ker(g_{x_1})\leq d-1$ and we may consider $\ker(g_{x_1})$ as a subgroup of $A^{X\setminus\{x_1\}}$. 
Inductively we find $x_2,\ldots, x_d\in X\setminus \{x_1\}$
 with corresponding elements $v_2,\ldots, v_d$ such that $V=\bigoplus_{i\leq d}\ZZ v_i$ and
$\bigcap_{i\leq d}\ker(g_{x_i})=0$. Hence
 $Y=\{x_1,\ldots, x_d\}$ is as required.     
\end{proof}

\begin{remark} The bound given in Lemma~\ref{l:linalg} is optimal. For suppose $A$ is a product of $n$ cyclic groups of different prime orders $p_1, \ldots, p_n$ with generators $g_1, \ldots, g_n$. Let $X= \{x_1, \ldots, x_n\}$, and let $f(x_i) = g_i$. For each $i$, $\la f \ra$ contains an element that only differs from $0$ at the  $i$-th component. \end{remark}

We   summarize and assemble all the pieces of this section in the following
proposition, which we will use in the next section to  describe arbitrary finite groups. % no colon
  
\begin{proposition} \label{prop:determine_E}
Suppose  $H=F/R$  where $F=\widehat F(s_1,\ldots, s_k)$ and $R$ is generated as a closed normal subgroup of $F$ by $r$ elements.
Let $\ol s\subset H$ be the image of $(s_1, \ldots, s_k)$, so $\seq{\ol s}=H$,
and suppose that any element of $H$ has length at most $m$ over $\ol s$.
Let $N$ be a finite group, and let  $Z=Z(N)$.

There are words 
\bc $w_1,\ldots, w_d\in R$ of length at most~$3m$, where $d=r\cdot\lambda|Z|$,\ec
 such that group extensions
$E^j, =1,2,$ of $N$ by $H$ are isomorphic over $N$
under an isomorphism taking  a lift  $\ol s^1\in E^1$  of $\ol s$
to  a  lift $\ol s^2\in E^2$,  provided the following conditions hold:
\begin{enumerate}
\item[(a)] $a^{s_i^1}=a^{s_i^2}, i=1,\ldots, k$ for all $a\in N$;
\item[(b)] $(E^1/N,\ol s^1)\cong(E^2/N,\ol s^2)$;
\item[(c)] $w_i(\ol s^1)=w_i(\ol s^2)\in N$ for $ i=1,\ldots, d$.
\end{enumerate}
\end{proposition}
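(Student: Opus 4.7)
My plan is to combine Lemma~\ref{l:3m} with Lemma~\ref{l:homology_group} and Lemma~\ref{l:linalgp}. Conditions (a) and (b) place us exactly in the setting of Lemma~\ref{l:3m}, so it suffices to choose the words $w_1, \ldots, w_d$ in such a way that condition (c) forces $\phi_{E^1}^{3m} = \phi_{E^2}^{3m}$; then Lemma~\ref{l:3m} produces the desired isomorphism $E^1 \to E^2$ over $N$ sending $\ol s^1$ to $\ol s^2$.

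By Lemma~\ref{l:homology_group}, the set of extensions of $N$ by $H$ realising the prescribed $F$-action on $N$ is a regular $\Hom_F(R, Z)$-set, so there is a unique $\psi \in \Hom_F(R, Z)$ with $\phi_{E^2}(w) = \phi_{E^1}(w)\psi(w)$ for every $w \in R$. Let $X \subseteq R$ be the finite set of elements representable by words in $\ol s$ of length at most $3m$, and consider the restriction map $\rho \colon \Hom_F(R, Z) \to Z^X$ with image $V \le Z^X$. Condition (c) forces $\psi(w_i) = 1$ for each $i$, and I want this to imply $\rho(\psi) = 0$, i.e., $\phi_{E^1}$ and $\phi_{E^2}$ agreeing on all of $X$, which is exactly $\phi_{E^1}^{3m}=\phi_{E^2}^{3m}$.

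The main obstacle is obtaining the sharp bound $d = r \cdot \lambda|Z|$: a direct application of Lemma~\ref{l:linalg} with $A = Z$ and $V$ of rank $\le r\lambda|Z|$ (as in Remark~\ref{rem:dimension}) would yield a set of size at most $r(\lambda|Z|)^2$, which is too large. To avoid this, I would decompose $Z = \bigoplus_p Z_p$ into its $p$-primary components, obtaining $\Hom_F(R, Z) = \bigoplus_p \Hom_F(R, Z_p)$ and $V = \bigoplus_p V_p$ with $V_p \le Z_p^X$. Since each element of $\Hom_F(R, Z_p)$ is determined by its values on the $r$ normal generators of $R$, all of which lie in $Z_p$, we have $\rank V_p \le r \cdot \rank Z_p$. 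Applying Lemma~\ref{l:linalgp} (the $p$-group version, rather than Lemma~\ref{l:linalg}) to each $V_p$ separately yields $Y_p \subseteq X$ of size at most $r \cdot \rank Z_p$ such that any element of $V_p$ vanishing on $Y_p$ is trivial. Taking $Y = \bigcup_p Y_p$ and using $\sum_p \rank Z_p \le \lambda|Z|$, we obtain $|Y| \le r \cdot \lambda|Z| = d$. Listing the elements of $Y$ as $w_1, \ldots, w_{|Y|}$ (padding with the trivial word $1 \in R$ if necessary) produces the required words, and condition (c) for these words forces $\psi$ to vanish on each $Y_p$ and hence on all of $X$, completing the reduction to Lemma~\ref{l:3m}.
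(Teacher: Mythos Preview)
Your proof is correct and follows essentially the same approach as the paper: both decompose $Z$ into $p$-primary components, bound the rank of each $\Hom_F(R,Z_p)$ separately, apply Lemma~\ref{l:linalgp} to each component, and sum. Your treatment is in fact slightly more transparent than the paper's, which cites Lemma~\ref{l:linalg} at the end even though the argument really uses Lemma~\ref{l:linalgp} primewise with the sharper per-prime rank bounds; you make this distinction explicit and correctly note that a direct application of Lemma~\ref{l:linalg} would only give $r(\lambda|Z|)^2$.
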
 
\begin{proof}
 By   Lemmas \ref{l:3m} and \ref{l:homology_group}, the abelian group $\Hom_F(R,Z)$  can be seen as a subgroup of $Z^X$ where $X$ is the set of group words in $s_1, \ldots, s_k$ of length $\le 3m$. 
Note that we have $\Hom_F(R,Z)=\bigoplus_p\Hom_F(R,Z_p)$ where $Z_p$ are the $p$-primary components of $Z$. Now for each prime $p$, the group  $\Hom_F(R,Z_p) $  can be seen as a subgroup of $Z_p^X$, and
by   Remark~\ref{rem:dimension} $\rank \Hom_F(R,Z_p)\leq  r\cdot \lambda |Z_p|$ .
Since $\sum_p r\cdot \lambda |Z_p|=r\cdot \lambda|Z|=d$,
we can use Lemma \ref{l:linalg} to find the required $w_1, \ldots w_d\in X$.
\end{proof}

We will apply the previous proposition in the situation where $H$ is a finite simple group, $\{s_1,\ldots, s_k\}\sub H$ is a swift
generating set of $H$ of size at most $\log|H|$ and $H$ has a profinite presentation $H\cong F/R$
where $R$ is generated as a closed normal subgroup of $F$ by $O(\log|H|)$
elements. The existence of such a profinite presentation is guaranteed by results in \cite{Lubotzky.Segal:03} and \cite{Guralnick:08}:

\begin{theorem}\label{t:profinite}\cite{Lubotzky.Segal:03}
There is a constant $C$ such that
any finite simple group  generated by $d$ elements has a profinite presentation with $d$  generators and $C+d$ relations.
 \end{theorem}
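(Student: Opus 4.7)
The plan is to derive the theorem from Lubotzky's cohomological characterisation of the minimum number of relators in a profinite presentation, together with a uniform bound on the second cohomology of finite simple groups acting on simple modules.

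First I would set up the cohomological framework. Suppose $G$ is generated by $d$ elements, and write $1 \to R \to F \to G \to 1$ for a profinite presentation with $F = \widehat{F}(s_1, \ldots, s_d)$. The minimum number of generators of $R$ as a closed normal subgroup of $F$ is controlled, prime by prime, by the rank of $R^{\mathrm{ab}} \otimes_{\ZZ} \FF_p$ as an $\FF_p[G]$-module. Since $F$ is free profinite, $H^i(F, M) = 0$ for $i \ge 2$, and chasing the long exact sequence in cohomology yields the identity
\[
r_d(G) - d \;=\; \max_{p, M}\, \max\!\left(0,\; \left\lceil \frac{\dim H^2(G,M) - \dim H^1(G,M)}{\dim M} \right\rceil\right),
\]
where $r_d(G)$ is the minimum number of relators for a profinite presentation on $d$ generators and $M$ ranges over simple $\FF_p[G]$-modules for varying primes $p$. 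This identity is the main content of \cite[Chapter 2]{Lubotzky.Segal:03}, and it reduces the theorem to producing an absolute constant $C$ such that $\dim H^2(G, M) \le C \cdot \dim M$ for every finite simple group $G$ and every simple $\FF_p[G]$-module~$M$.

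The hard part is establishing this uniform $H^2$-bound, a deep fact that depends on the classification of finite simple groups. For alternating groups it follows from bounds on $H^2$ of symmetric groups combined with the transfer map. For groups of Lie type in defining characteristic, it is extracted from cohomology of the ambient algebraic group via Cline--Parshall--Scott theory and the BN-pair structure, while in cross-characteristic one combines the Lyndon--Hochschild--Serre spectral sequence with cohomology bounds for parabolic subgroups. A convenient shortcut for most families is provided by \cite{Guralnick:08}: an ordinary presentation is automatically a profinite presentation, so the short ordinary presentations constructed there (with $O(1)$ generators and relations) directly give $r_d(G) \le d + C$. Only the Ree groups $^2G_2(q)$, for which no short ordinary presentations are known, must be handled through the cohomological route with an $H^2$-bound specific to that family.
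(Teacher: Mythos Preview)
Your approach is essentially the paper's: reduce the statement, via Lubotzky's cohomological formula for the minimal number of profinite relators, to a uniform bound $\dim H^2(G,M)\le C\cdot\dim M$ for finite simple groups, and then invoke \cite{Guralnick:08}. The paper's proof is just this citation---Theorem~B of \cite{Guralnick:08} for the $H^2$-bound together with the argument of Theorem~2.3.3 in \cite{Lubotzky.Segal:03}---so your write-up is in fact more detailed than what the paper provides. One small point: your detour through the short \emph{ordinary} presentations of Theorem~A, with the Ree groups handled separately, is unnecessary, since Theorem~B of \cite{Guralnick:08} establishes the $H^2$-bound directly and uniformly for \emph{all} finite simple groups, Ree groups included.
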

\begin{proof}
This follows from Theorem B of \cite{Guralnick:08} and the proof of Theorem 2.3.3 of \cite{Lubotzky.Segal:03} in the case of simple groups. For the latter, we 
refer to the proof that Conjecture B implies Conjecture A in \cite{Lubotzky.Segal:03}. This proof also  works for profinite presentations as verified  by the authors after Thm.\ 3.3. 
\end{proof}

\section{Describing general finite groups} 
\label{s: general}

We are now in the position to give short descriptions of arbitrary finite groups.

\n {\bf Theorem \ref{t:main}.}\emph{ The class of finite groups is strongly  $\log^3$-compressible.  }

\begin{proof}  Let $G$ be a finite group.
We fix a subnormal series 
 %why normal??
 \[1=G_0\lhd G_1\lhd\ldots \lhd G_r=G\]
with simple factors
$H_i:=G_i/G_{i-1}$, $ i=1,\ldots r$. 

Note that the  length $r$ is  bounded  by $\log |G|$.

Choose   an ascending sequence of   sets 
\[ \ES = T_0\subset T_1\subset\ldots\subset T_r=T\]
with $\seq{T_i}=G_i,0 \le i\leq r\leq \log |G|$, 
as follows.

%\item   Let $T_0$ be a preprocessing set for $\rad G$, so that
% $(\rad G, T_0)$ can be described by a formula $\phi_0$ of length in $O(\log ^3|\rad(G)|)$ by Corollary~\ref{c:solv}.

\vsps

\n (1)  If $H_i$ is a finite simple group not of type $^2G_2$, we let $T_i=T_{i-1}\cup \{s_1,\ldots s_C\}$ for any elements $s_1,\ldots, s_C\in G_i$ such that
$s_1 G_{i-1}, \ldots, s_C G_{i-1}$ are generators for $G_i/G_{i-1}= H_i$
according to Theorem \ref{thm:Guri}. Then $(H_i, s_1 G_{i-1}, \ldots, s_C G_{i-1})$ can
be described by  a sentence $\phi_i$ of length  $O(\log |H_i|)$ by Proposition~\ref{p:simple}(i) and its proof.

\vsps

\n (2)  If $H_i$ is a group of type $^2G_2$, we let  $C=2$ and $T_i=T_{i-1}\cup\{s_1,s_C\}$ for any
elements $s_1,s_2\in G_i$ such that $s_1G_{i-1},s_2G_{i-1}$ generate $G_i/G_{i-1}\cong H_i$. Then $(H_i, s_1G_{i-1}, s_2G_{i-1})$ can
be described by a sentence $\phi_i$ of length  $O(\log |H_i|)$ by Proposition~\ref{p:Ree}. %quark, sentence in O(...)
 
\vsps

Note that for each $i=1,\ldots,r$ we have $|T_i\setminus T_{i-1}|\leq C_0$ for
the constant $C_0$ given in Theorem~\ref{thm:Guri}.
The sentence  $\phi$ describing $G$ needs to express conditions (a), (b) and (c) of Proposition~\ref{prop:determine_E} for each group $G_i$
with normal subgroup $G_{i-1}$. 

We start with a prenex of existential quantifiers referring to the elements of $T$.

\medskip

\item 1. Obtain a preprocessing set $A$ for $G$ over $T$:   

To give short descriptions for the conditions of Proposition~\ref{prop:determine_E}, %Comma
at each step $i=1,\ldots r$ we first obtain preprocessing sets from $T$
using formula $\psi$ from Lemma~\ref{l:preproc_formula} where we replace
the parameters from $T$ by the corresponding variables. 
  The formula  $\psi$
has  length $O(\log^2|G|)$. 

Since the preprocessing set $A$ will be used in each part of the sentence $\phi$, the scope of the existential quantifiers referring to $A$   extends over all of~$\phi$. 
\medskip

\item 2. Express $(G_i/G_{i-1}, T_i\setminus T_{i-1})\cong (H_i,s_1 G_{i-1}, \ldots, s_C G_{i-1})$:

We let  the formula $\chi_i, \, i=1,\ldots r,$ express that
 \[      \ \ \   ( G_i/G_{i-1}, T_i\setminus T_{i-1}) \models \phi_i.   \]

 %As in Proposition~\ref{p:short_presentation}  not there ###
 We can use the $\alpha_{|T_i|}$
to express that $G_{i-1}$ is a normal subgroup of $G_i$  using a length of $O(\log |G_i|)$.  
We now restrict the quantifiers in $\phi_i$ to $G_i$ using $\alpha_{|T_i|}$ and replace each occurrence of ``$u= v$''  in $\phi_i$ by  \bc ``$uv^{-1} \in G_{i-1}$''. \ec 
Since we replace the equality   symbols  in $\phi_i $ by  strings  of length  $O( \log |G_{i-1}|)$, the resulting formula $\chi_i$ has length $ O( \log|H_i|\log |G_{i-1}|)$. 
%and their conjunction $\chi$ has length $O(\log^2|G|)$. Schrott
Then the  conjunction $\chi$ of the formulas $\chi_i$ has length $O(\log^2|G|)$.

\medskip

\item 3. Conjugation action of $G_i$ on $G_{i-1}$:

For each $i=2,\ldots, r$, let $\kappa_i$ describe the action of 
$g\in T_{i} \setminus T_{i-1}$ on $G_{i-1}$ by conjugation. 
Since $T_{i-1}$ generates $G_{i-1}$, it suffices  to determine  $g^{-1}w g$  
for each  $w \in T_{i-1}$ and $g \in T_{i}\setminus T_{i-1}$ as  an element $h_{w,g}\in G_{i-1}$.
Since $h_{w,g}$ has length at most $2\log|G_{i-1}|$ over $A_{i-1}$ and there are at most $C_0\cdot\log|G_{i-1}|$ such pairs, $\kappa_i$
  has length in $O(\log^2|G_{i-1}|)$. The conjunction $\kappa$ of the $\kappa_i$ has
length $O(\log^3|G|)$.

\medskip

\item 4. Describing  the extension of $G_{i-1}$ by $H_i$:

We use Theorem~\ref{t:profinite} to obtain a profinite presentation for $H_i$
with a swift generating set (Cor.\ \ref{c:swift_generation}) of size $k\leq \log|H_i|$ corresponding to the elements of $A_i\setminus A_{i-1}$ 
and with $r\leq C+\log|H_i|$ relations.

By Proposition~\ref{prop:determine_E} there is  $d\leq \log|Z(G_{i-1})|(C+\log |H_i|)$,  and there are 
words $w_1,\ldots w_d$ in $\ol a_i=A_i\setminus A_{i-1}$ of length at most $3\log |H_i| $ such that $w_j(\ol a_i)=h_j~\in~G_{i-1}, j=1,\ldots, d,$ determine  $ G_i$. 
Since any element of $G_{i-1}$ has length at most $2 \log|G_{i-1}|$ over $A_{i-1}$, we obtain 
a formula $\rho_i$ of length 
$O(\log|Z(G_{i-1})|\log|H_i|\log|G|)$. Since $\sum_i \log |H_i|\leq \log|G|$, 
the conjunction $\rho$ of the $\rho_i$ yields a formula of length 
$O(\log|G|\log^2|G|)$.

\medskip

We now let $\phi$ be the sentence consisting of the prenex of existential quantifiers referring to $T$ followed by the conjunction of $\psi,\kappa,\chi,$ and $\rho$. By repeated application of Proposition~\ref{prop:determine_E}
one verifies  that $\phi$ describes $G$. The strong $\log ^3$-compressibility of the class of finite groups follows
 since any element of $G$ has length at most $2\log|G|$ over the preprocessing set $A$,  \end{proof}

The strong   $\log^3$-compressibility of the class of finite groups allows
us to also describe finite transitive permutation groups (as explained in the introduction), and finite
groups with a distinguished automorphism.

\begin{cor}  {\rm (i)}  The class of finite groups with a distinguished subgroup is $\log^3$-compressible in the language of groups with an additional unary predicate.  \\
{\rm (ii)}  The class of finite groups with a distinguished automorphism is $\log^3$-compressible in the language of groups with an additional  unary function.  \end{cor}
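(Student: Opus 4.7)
Both statements follow directly from the strong $\log^3$-compressibility of finite groups (Theorem~\ref{t:main}), combined with the existence of swift generating sets (Corollary~\ref{c:swift_generation}) and the generation formulas $\alpha_k$ from Lemma~\ref{generation}.

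For (i), let $G$ be a finite group with distinguished subgroup $H$, and let $P$ denote the additional unary predicate. First I would choose a swift generating set $\overline{a} = (a_1, \ldots, a_k)$ for $H$, so $k \le \log|H| \le \log|G|$. By strong $\log^3$-compressibility applied to the tuple $\overline{a}$ in $G$, there is a formula $\phi(y_1, \ldots, y_k)$ in the language of groups of length $O(\log^3|G|)$ describing the structure $(G, \overline{a})$. The describing sentence $\psi$ for $(G,H)$ in the enriched language is then
\[
\exists y_1 \cdots \exists y_k \bigl[\, \phi(\overline{y}) \,\wedge\, \forall x \bigl(P(x) \leftrightarrow \alpha_k(x; \overline{y})\bigr)\,\bigr].
\]
Any model $(G', P^{G'}) \models \psi$ admits an isomorphism $f \colon G \to G'$ taking $\overline{a}$ to the witnesses $\overline{c}$, and then $P^{G'}$ equals the subgroup generated by $\overline{c}$, namely $f(H)$. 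Since $\alpha_k$ has length $O(\log|G|)$, the total length is $O(\log^3|G|)$.

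For (ii), let $G$ be a finite group with distinguished automorphism $\sigma$, in the language of groups augmented by a unary function symbol also denoted $\sigma$. I would choose a swift generating set $\overline{a} = (a_1, \ldots, a_k)$ for $G$, set $b_i := \sigma(a_i)$, and apply strong $\log^3$-compressibility to obtain a formula $\phi(\overline{y}, \overline{z})$ of length $O(\log^3|G|)$ describing $(G, \overline{a}, \overline{b})$. The describing sentence is
\[
\exists \overline{y} \, \exists \overline{z} \, \Bigl[\, \phi(\overline{y}, \overline{z}) \,\wedge\, \forall x \, \forall w \bigl(\sigma(xw) = \sigma(x)\sigma(w)\bigr) \,\wedge\, \bigwedge_{i=1}^{k} \sigma(y_i) = z_i\,\Bigr].
\]
To verify, suppose $(G', \tau) \models \psi$ with witnesses $\overline{c}, \overline{d}$, and let $f \colon G \to G'$ be the isomorphism given by $\phi$, with $f(a_i) = c_i$ and $f(b_i) = d_i$. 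The condition $\tau(c_i) = d_i$ forces $\tau \circ f$ and $f \circ \sigma$ to agree on the generating set $\overline{a}$ of $G$; being homomorphisms, they coincide, so $\tau = f\sigma f^{-1}$. Since $\overline{b} = \sigma(\overline{a})$ generates $G$, its $f$-image $\overline{d}$ generates $G'$, whence $\tau$ is surjective and therefore an automorphism of $G'$. Thus $(G', \tau) \cong (G, \sigma)$ as structures in the enriched language.

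There is no serious obstacle here. Once Theorem~\ref{t:main} is available, each enrichment reduces to wrapping the short parameter-description by a constant-sized clause that pins down the extra predicate or function in terms of its behavior on a short generating set, exploiting that subgroups and homomorphisms of finite groups are determined by such behavior.
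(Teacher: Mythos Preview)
Your proof is correct and follows essentially the same approach as the paper: pick a generating set of size at most $\log|G|$ for the relevant subgroup (or for $G$ itself, recording the images under $\sigma$), invoke strong $\log^3$-compressibility of $(G,\overline{g})$, and add a short clause using $\alpha_k$ or the homomorphism condition to pin down the extra structure. The paper's own proof is terser (it dispatches (ii) with ``is similar''), and your argument spells out the verification more fully; the only cosmetic point is that swiftness of the generating sets is not actually needed here---any generating set of logarithmic size suffices.
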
 
\begin{proof} (i) Given a finite group $G$ and a subgroup $U \le G$, choose a string $\ol g$  of generators for $U$ of length  $k \le \log |G|$. Let $\phi$ be the description of $( G, \ol g )$ obtained above. Then $|\phi| = O (\log^3 |G|)$. Use the formula $\alpha_k$ from Lemma~\ref{generation} of length $O( \log |G|)$ to express that $U = \seq {\ol g}$ in $G$.  
(ii) is similar. 
\end{proof}

\begin{remark} \label{rmk:Ottimal} The exponent $3$ in Theorem~\ref{t:main} is optimal even for $p$-groups of nilpotency class 2 by a result of  Higman, which states that    there are at least $p^{\frac 2 {27}  n^2(n-6)}$ non-isomorphic such  groups of order $p^n$ (see e.g.\ \cite[Thm. 4.5]{Blackburn}). This result is  applied in a way  similar to the proof of \cite[Prop.\ 8.6]{Babai:97}. \end{remark}

We provide an upper bound on  the length  of descriptions when only a  bounded number of quantifier alternations is allowed.
 
 \begin{theorem} \label{thm:bounded QA} For some $m$, the class of finite groups $G$  is   $\log^4$-compressible via $\Sigma_m$ sentences. \end{theorem}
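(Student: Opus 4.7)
The plan is to reuse the construction in the proof of Theorem~\ref{t:main}, with two adjustments aimed at controlling quantifier alternations. First, I would replace every occurrence of the generation formula $\alpha_k$ from Lemma~\ref{generation}, which has $\Theta(\log |G|)$ alternations, by the negation-free existential formula $\beta$ from Lemma~\ref{generation2}. This turns each membership/generation test into a $\Sigma_1$ formula, at the cost of an extra $\log|G|$ factor in length. Second, for the description $\phi_i$ of each simple factor $H_i$ used in Step~2 of the proof, I would use the $\Sigma_3$-sentence from Proposition~\ref{p:simple}(ii) (based on Lemma~\ref{lem:short_pres}(ii)) when $H_i$ is not of type $^2G_2$, and the $\Sigma_d$-sentence from Proposition~\ref{p:Ree} otherwise. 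In both cases $\phi_i$ already has a bounded number of quantifier alternations.

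With these choices, each ingredient of the describing sentence lies in $\Sigma_m$ for some constant $m$ independent of $G$. The preprocessing formula $\psi$ of Lemma~\ref{l:preproc_formula}, the conjugation description $\kappa$, and the extension description $\rho$ are essentially existential statements about straight-line programs and equations in $G$, and remain so (or at worst become $\Sigma_2$) once $\alpha$ is replaced by $\beta$. Each $\chi_i$ is obtained from the $\Sigma_d$-sentence $\phi_i$ by replacing its equality relation with an existential membership formula for $G_{i-1}$ and relativizing its quantifiers to $G_i$ via $\beta$; since $\beta$ is $\Sigma_1$ and $\neg\beta$ is $\Pi_1$, these two substitutions raise the alternation count of $\phi_i$ by only a bounded constant. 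The conjunction of all the pieces, after renaming variables to be disjoint, can then be prenexed into a single $\Sigma_m$ sentence by merging corresponding quantifier blocks, using that a conjunction of $\Sigma_m$ formulas is again $\Sigma_m$.

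The length estimate is a direct reprise of the proof of Theorem~\ref{t:main}. Substituting $\beta$ (of length $O(\log^2 v)$) for $\alpha$ (of length $O(\log v)$), together with using the $\Sigma_3$-description of each $H_i$ of length $O(\log^2|H_i|)$ rather than the shorter $O(\log|H_i|)$-description, multiplies each of the dominant estimates by at most a $\log|G|$ factor. In particular $\chi$ becomes $O(\log^4|G|)$ via $\sum_i \log^2|H_i|\log^2|G_{i-1}| \le \log^4|G|$, and the upgraded preprocessing formula is similarly $O(\log^4|G|)$, while $\kappa$ and $\rho$ stay at $O(\log^3|G|)$. Thus the total length is $O(\log^4|G|)$.

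The principal obstacle I anticipate is syntactic bookkeeping rather than a conceptual difficulty: each equality occurrence in $\phi_i$ must be inspected to see whether it sits in a positive or negative context, since replacing it by $\beta(uv^{-1};T_{i-1})$ contributes an $\exists$-block in the positive case and a $\forall$-block in the negative case. One must also keep the relativization of the universal quantifiers of $\phi_i$ in line, using the equivalence $\forall x\,(\exists \vec{y}\,\gamma \rightarrow \delta) \equiv \forall x\,\forall \vec{y}\,(\gamma \rightarrow \delta)$, so that the existential block of $\beta$ is absorbed into the outer universal block and does not create a new alternation. Since $\phi_i$ itself has only a bounded number of alternations, and only a bounded number of such substitutions occur before the merging step, a bounded constant $m$ (depending on $d$ from Proposition~\ref{p:Ree} and on the $\Sigma_3$ form of Lemma~\ref{lem:short_pres}(ii), plus a small overhead from the relativization) suffices.
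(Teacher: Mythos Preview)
Your proposal is correct and follows essentially the same approach as the paper: replace every use of the $\alpha_k$ generation formulas by the existential formulas $\beta$ from Lemma~\ref{generation2}, and replace the $O(\log|H_i|)$-descriptions of the simple factors by the $\Sigma_d$-descriptions of length $O(\log^2|H_i|)$ from Proposition~\ref{p:simple}(ii) and Proposition~\ref{p:Ree}. Your discussion of how relativization and the positive/negative replacement of equality interact with the alternation count is more explicit than the paper's, which simply asserts that the number of alternations is bounded; and your length estimate for $\chi$ (namely $O(\log^4|G|)$, via $\sum_i \log^2|H_i|\log^2|G_{i-1}|$) is slightly more conservative than the paper's claimed $O(\log^3|G|)$, but both land safely within the stated $\log^4$ bound.
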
 
 \begin{proof} We only note the necessary modifications to the previous arguments.  Throughout, instead of the $\alpha_k$ we use the existential generation formulas $\beta_k$ from Lemma~\ref{generation2}, which have  length $O(\log^2|G|)$ because $k \le \log |G|$ throughout. 
 
 The new version of $\psi$ in Lemma~\ref{l:preproc_formula} now has length $O(\log^3|G|)$. For some small $d$ we can choose $\Sigma_d$-descriptions $\phi_i$ of  $H_i$ of  length $O (\log^2 |H_i|)$ via Propositions~\ref{p:simple} and~\ref{p:Ree}.  Since we now replace the equality   symbols  in $\phi_i $ by  strings  of length  $O( \log^2 |G_{i-1}|)$, the resulting new version of the  formula $\chi_i$ has length $ O( \log|H_i|\log^2 |G_{i-1}|)$, and their conjunction has length $ O (\log^3 |G|)$. No generation formulas are used elsewhere in the proof   of Theorem~\ref{t:main}, so we conclude the argument as before. It is clear that the number of quantifer alternations is now  bounded.\end{proof}

 \begin{remark}  \
 
 \n 1. We don't know  whether the exponent can be improved to $3$ in Thm.\ \ref{thm:bounded QA}. 

\n  2. Reviewing the proof of Theorem~\ref{t:main}, it would   be interesting to show  a stronger   compressibility result for  the class of finite groups without  nontrivial abelian normal subgroup. In this case, we have  $Z(G_i) =1$ for each $i$, so that  Step 4 is not needed.  \end{remark}

 \begin{remark}  Theorem \ref{t:simple} leaves open some questions. 
 It would be interesting to 
show $\log$-compressibility for    classes of finite groups $G$  that are in some sense  close to simple, similar to Prop~\ref{prop:char simple}. These include    the central extensions  of simple groups, 
and    the   almost simple groups (that is, $S \le G \le \text{Aut}(S)$ for some simple group $S$). 
Also  it would be desirable  to show the strong $\log$-compressibility for  the
class of finite simple groups. \end{remark}

\noindent
{\bf Acknowledgement:} The authors wish to thank Alex Lubotzky, Bill Kantor, and Martin Ziegler for helpful discussions. This research was partially supported by the Marsden fund of New Zealand,  a Hood fellowship by the Lions foundation of New Zealand, and the DFG through SFB 878.

 \bibliographystyle{plain}
% \bibliography{../../Logicsharing/bibs/groups,../../Logicsharing/bibs/modeltheory}
% 

\end{document}